\documentclass[11pt,a4paper,article]{amsart}

\usepackage{amsmath,amsfonts,amssymb,amsthm}
\usepackage[english]{babel}
\usepackage[T1]{fontenc}
\usepackage[utf8]{inputenc}
\usepackage{xcolor}
\usepackage{datetime}
\usepackage{enumitem}
\usepackage[foot]{amsaddr}
\usepackage{lipsum}
\usepackage{pifont}
\usepackage{bbm}
\usepackage{ bbold }
\usepackage[dvipsnames]{xcolor}
\usepackage{comment}
\usepackage{tikz}
\usepackage{float}

\newtheorem{theorem}{Theorem}

\newtheorem*{lemma*}{Lemma}

\newtheorem{theo}{Theorem}[section]
\newtheorem{coro}[theo]{Corollary}
\newtheorem{propo}[theo]{Proposition}

\newtheorem{lem}[theo]{Lemma}

\theoremstyle{definition}
\newtheorem{remark}[theo]{Remark}

\def\finremark{	\hfill $\boxtimes$}

\numberwithin{equation}{section}

\definecolor{pRoot}{RGB}{224,130,20}   
\definecolor{pProt}{RGB}{33,102,172}   
\definecolor{pVert}{RGB}{27,158,119}   
\definecolor{cInt}{RGB}{43,47,54}      

\usepackage{xcolor}

\newcommand{\n}[1]{#1}

\newcommand{\convd}{\,{\buildrel \mathrm{d} \over \longrightarrow}\,}

\def\R{\mathbb{R}}

\def\Z{\mathbb{Z}}

\def\E{\mathbf{E}}
\def\V{\mathbf{V}}
\def\P{\mathbf{P}}

\def\T{\mathbb{T }}
\def\cT{\mathcal{T}}

    \title[Uniform integrability of the distance to the nearest leaf]{Uniform integrability of the distance to the nearest leaf in random trees}
\author[V.\,J. Maci\'a]{V\'{\i}ctor J. Maci\'a}
\address[V\'{\i}ctor J. Maci\'a]{Departamento de Análisis Matem\'atico, Universidad de La Laguna, Spain.}
\email{victor.macia@ull.edu.es}

\author[B. Stufler]{Benedikt Stufler}
\address[Benedikt Stufler]{TU Wien and Institute of Discrete Mathematics and Geometry}
\email{benedikt.stufler@tuwien.ac.at}
\subjclass{}
\keywords{Distance to the border, $k$-protected nodes, Kesten's tree, Galton-Watson processes, asymptotic distribution, convergence of moments.}

\begin{document}
	
\begin{abstract}
We study the distance from the root to the nearest leaf, the analogous quantity for a uniformly chosen vertex, and its protection number, in size-conditioned simply generated trees. We prove a uniform exponential tail bound for each of these quantities, valid for arbitrary offspring distributions. As a consequence, these random variables are uniformly integrable of every order. This yields convergence of all moments to those of the corresponding local limit. The argument is probabilistic and unified across the three quantities.
\end{abstract}
	\maketitle

	\section{Introduction}

	 For a rooted tree \(T\), the \emph{distance to the border}, \(\partial(T)\), is defined as the number of edges along the shortest path from the root to any leaf, while the \emph{height} of \(T\) is the number of edges along the longest such path. Here we use the convention, that a leaf is defined as a vertex with no children. Together, these metric quantities capture the two extreme cases of root-to-leaf distances.
	\medskip
	
	Given a vertex $v$ of $T$, we likewise define $\partial(T,v)$ as the length of a shortest path from $v$ to any leaf of $T$. This notion is related to the protection number $\partial_{\mathrm{p}}(T,v)$, which is defined as the distance to the border in the fringe subtree of $T$ at $v$. That is, the subtree consisting of $v$ and all its descendants. The two notions are related through the inequality \[
	\partial(T,v) \le \partial_{\mathrm{p}}(T,v).\]
	 If $v$ is the root vertex of $T$, then the protection number coincides with the distance to the border.

		\medskip

	Devroye and Janson~\cite{zbMATH06346951} studied the protection number of a random node $v_n$ in a simply generated tree $\cT_n$ with $n$ vertices. That is,  given a sequence $(\omega_k)_{k \ge 0}$ of non-negative weights satisfying $\omega_0>0$ and $\omega_k>0$ for at least one $k \ge 2$, the random tree $\cT_n$ assumes a given $n$-vertex plane tree $T$ with vertex outdegrees $(d_{T}^+(v))_{v \in T}$ with probability
	\[
		\P({\cT_n = T}) \propto \prod_{v \in T} \omega_{d^+_T(v)}.
	\]
	We only consider $n$ where this model is well-defined, that is, for which the right-hand side is positive for at least one rooted $n$-vertex plane tree. Their main result~\cite[Thm. 1.1, Thm. 3.1]{zbMATH06346951} proves annealed and quenched distributional convergence of the protection number $\partial_{\mathrm{p}}(\cT_n, v_n)$ to the distance to the border $\partial(\cT)$ in an associated Bienaym\'e--Galton--Watson tree~$\cT$.
	
	\medskip
		
	A natural question is whether one also obtains convergence of moments. Using a generating function approach, this was verified by Cheon and Shapiro~\cite{Protection-Gisang} and Heuberger and Prodinger~\cite{Protection-Clemens} for the special case of uniform plane trees (i.e. $\omega_k = 1$ for all $k \ge 0$), and by Gittenberger, Gol\k{e}biewski, Larcher and Sulkowska~\cite{Protection1-mean} for simply generated trees whose weight-sequences are subject to an analytic subcriticality condition.
	
	\medskip
	
	Our main results describe in full generality the asymptotic limits of the protection number and distance to the border of typical vertices and the root vertex. That is, we treat all three cases I, II and III distinguished by Janson~\cite{Janson2012}. We verify uniform integrability through exponential bounds that ensure convergence of all moments. 
	\smallskip

	In the case of the distance to the border of a random vertex in the simply generated tree model we make use of a pointed random tree $\cT^{*}$ that describes the asymptotic vicinity of a random point up to its first ancestor with large degree. 

\begin{theorem}[Distance to the border of a random vertex]\label{te:border}
Let $v_n$ be a uniformly chosen vertex of $\cT_n$, and let $\cT^{*}$ be
the corresponding pointed local limit. As $n\to\infty$ along those $n$ for which
$\cT_n$ is defined,
\[
\partial(\cT_n,v_n) \xrightarrow{d} \partial(\cT^{*}),
\qquad
\E\bigl[\partial(\cT_n,v_n)^p\bigr] \to \E\bigl[\partial(\cT^{*})^p\bigr]
\quad(p\ge 1).
\]
\end{theorem}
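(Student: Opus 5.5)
The plan is to separate the statement into distributional convergence and a uniform exponential tail bound. Convergence of moments then follows from uniform integrability. For the distributional part I will use the known local convergence of $(\cT_n,v_n)$ to the pointed limit $\cT^{*}$: the vicinity of $v_n$ up to its first ancestor of large degree converges in the local topology, following Janson~\cite{Janson2012} and Devroye--Janson~\cite{zbMATH06346951}, in all three cases I, II, III. The event $\{\partial(\cT_n,v_n)\le k\}$ asks whether some leaf lies within graph distance $k$ of $v_n$. This depends only on the ball of radius $k+1$ around $v_n$, with one exception: a vertex of huge degree may appear in that ball, and then its many children must be inspected. In the condensation case (case III) I will handle this separately. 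A vertex of degree tending to infinity has, with high probability, a leaf child, because its children's fringe subtrees behave like i.i.d.\ Galton--Watson trees. This matches the convention built into $\cT^{*}$, whose vertex of infinite degree has leaf children almost surely. Hence $\P(\partial(\cT_n,v_n)\le k)\to\P(\partial(\cT^*)\le k)$ for every fixed $k$.

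For uniform integrability I aim for constants $C,c>0$, independent of $n$, such that
\[
\P\bigl(\partial(\cT_n,v_n)\ge k\bigr)\le C e^{-ck}\qquad\text{for all } n,k.
\]
Given this bound, $\sup_n\E[\partial(\cT_n,v_n)^{p+1}]<\infty$ for every $p$, and convergence of the $p$-th moments follows from the distributional convergence. Since $\partial(\cT_n,v_n)\le\partial_{\mathrm p}(\cT_n,v_n)$, it suffices to bound the tail of the protection number of a uniform vertex. By the standard counting (double-counting) argument, for a uniform vertex this tail equals
\[
\frac{1}{n}\,\E\,\#\{v\in\cT_n:\ \text{the fringe subtree at } v \text{ has all leaves at depth} \ge k\}.
\]
Let $\xi$ be the offspring law, tilted to be critical or the appropriate subcritical one, and let $\cT$ be the unconditioned Galton--Watson tree. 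Decompose over fringe subtrees via the cycle lemma / Kemp's formula. The expected number of fringe copies of a fixed tree $t$ with $|t|=m$ is $n\,\P(\cT=t)\,\frac{\P(S_{n-m}=n-m-1+\dots)}{\P(|\cT|=n)}$-type ratios. Summing over $t$ gives
\[
\frac1n\E[\cdots]\le \sum_{m} \P\bigl(\partial(\cT)\ge k,\ |\cT|=m\bigr)\, R_{n,m},
\]
where $R_{n,m}$ is a ratio of local probabilities for the Łukasiewicz random walk.

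I expect the main obstacle to be the uniform control of $R_{n,m}$ when $m$ is comparable to $n$, across all cases. This is especially hard in case III without moment assumptions, where only subexponential-type local estimates are available. For $m\le n/2$ I will bound $R_{n,m}$ by a constant using the local limit theorem, or the one-big-jump asymptotics in the condensation case. For $m>n/2$ I will avoid ratio estimates altogether. Instead I will use a direct argument: in any tree, a vertex whose fringe subtree has all leaves at depth $\ge k$ has $k$ generations of descendants without leaves. In $\cT_n$ the fraction of vertices with outdegree $0$ is close to $\P(\xi=0)>0$, with uniformly exponential concentration. A shortest root-to-leaf path of length $\ge k$ in a fringe subtree forces at least $k$ consecutive non-leaf ancestors along all branches. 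I will bound this by exploring from $v$: each explored vertex independently (up to the conditioning) has a leaf child with probability bounded below. The factor $e^{-ck}$ in the main term comes from the unconditioned bound $\P(\partial(\cT)\ge k)\le (1-\P(\xi=0))^{k}$. This holds because the leftmost-child path must avoid leaves, and more sharply via the recursion $q_{k+1}=\phi(q_k)$ with $q_1=1-\P(\xi=0)$ and $q_k\to0$ geometrically when $\phi'(0)<1$. If $\phi'(0)=0$ the decay is even faster. I would try first to show $R_{n,m}\le C$ uniformly for $m\le n-K$, and treat the remaining large fringe subtrees by this crude exploration bound.
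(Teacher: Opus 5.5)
\paragraph{Review.} Your distributional argument is essentially the paper's. Two small corrections:
\begin{itemize}
\item The convergence of the vicinity of $v_n$ up to its first large-degree ancestor is Stufler's pointed local limit (Thms.~5.1 and~5.6 of~\cite{zbMATH07039768}), not a result of Janson or Devroye--Janson.
\item The fact that the large-degree ancestor has a leaf child with high probability is what type~II needs. In type~III, $v_n$ is itself a leaf with high probability.
\end{itemize}
Your reduction to $\partial_{\mathrm p}(\cT_n,v_n)$ also matches the paper.

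The genuine gap is the exponential tail bound. Your fringe-counting route needs $R_{n,m}\le C$ uniformly in $m\le n/2$, which you propose to obtain from a local limit theorem or one-big-jump asymptotics. Both need regularity that the theorem does not assume: a stable domain of attraction in type~I, subexponential-type tails in type~II. For an arbitrary type~II sequence, for instance one whose large atoms of $\xi$ sit on a very sparse set, $\P(S_n=n-1)$ can collapse for $n$ between two atom scales while $\P(S_{n-m}=n-m)$ does not, so $R_{n,m}$ is unbounded. In type~III ($R=0$) there is no offspring law or random walk at all. Your treatment of $m>n/2$ is not an argument either: that ``each explored vertex independently (up to the conditioning) has a leaf child with probability bounded below'' is exactly what cannot be assumed in a size-conditioned tree.

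The missing idea makes every local estimate unnecessary. Let $I$ be the DFS index of $v_n$; it is uniform and independent of $\cT_n$.
\begin{itemize}
\item If $\partial_{\mathrm p}(\cT_n,v_n)\ge k$, the chain of leftmost descendants $v_n=u_0,u_1,\dots,u_{k-1}$ exists. It is visited at consecutive DFS times $I,\dots,I+k-1$, with $d_I,\dots,d_{I+k-2}>0$.
\item Conditionally on the degree multiset $M$, the DFS word is uniform on the \L{}ukasiewicz orderings of $M$, because the weight depends only on $M$.
\item $\Z/n\Z$ acts freely on the orderings of $M$, and each orbit contains exactly one \L{}ukasiewicz word (cycle lemma). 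So rotating by $I$ yields a uniform ordering of $M$, whose first $k-1$ entries are a sample without replacement.
\end{itemize}
Hence
\[
\P\bigl(\partial_{\mathrm p}(\cT_n,v_n)\ge k\bigm|\mathcal G_n\bigr)\le\frac{(n-L)_{k-1}}{(n)_{k-1}}\le\Bigl(1-\frac{L}{n}\Bigr)^{k-1}.
\]
Janson's exponential leaf concentration $\P(L/n<c_0)\le e^{-an}$ with $c_0\in(0,p_0)$, together with $e^{-an}\le e^{-ak}$ for $k\le n$, then gives $Ce^{-ck}$ with no local limit input.

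You already had both ingredients: leafless generations below $v$, and leaf concentration. What you were missing is that choosing $v_n$ uniformly is a uniform cyclic shift of the \L{}ukasiewicz word. That shift turns the conditioned tree into exchangeable sampling.
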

	
	There is a critical regime for $\cT^{*}$ where we do not encounter such an ancestor, yielding an infinite spine, and a condensation regime where such an ancestor appears in the vicinity of a random node. In the proof we argue that the vicinity up to this ancestor is sufficient to determine the distance to the border. Since this vicinity converges in full generality by Stufler~\cite[Thm.~5.6]{zbMATH07039768}, the convergence in distribution in Theorem~\ref{te:border} holds for all three types with no additional assumption on the weight sequence, as do the exponential tail bound and the resulting uniform integrability.
	\smallskip
    
	We remark that although under some regularity assumptions a complete local limit up to the root is known~\cite{MR3826153}, it is an open research question to characterise what happens beyond the first ancestor with large degree in full generality because differing behaviour might occur. After a stochastically bounded number of generations we may encounter the root, or another ancestor with large degree, yielding possibilities for multiple or even infinitely many vertices with infinite degree to appear in the limit.
	
	For the classical question of what happens close to the root of $\cT_n$ this problem is better understood~\cite{MR0386042, MR607933, MR865130,MR871905,MR2764126,Janson2012} and a full classification is available, yielding that there is a random tree $\hat{\cT}$ that describes the asymptotic vicinity of the root vertex. 
	
\begin{theorem}[Distance to the border from the root]\label{te:root}
As $n\to\infty$ along those $n$ for which
$\cT_n$ is defined,
\[
\partial(\cT_n) \xrightarrow{d} \partial(\hat{\cT}),
\qquad
\E\bigl[\partial(\cT_n)^p\bigr] \to \E\bigl[\partial(\hat{\cT})^p\bigr]\quad(p\ge 1),
\]
where $\hat{\cT}$ is the local limit of $\cT_n$ around the root.
\end{theorem}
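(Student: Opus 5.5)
The plan has two parts: convergence in distribution via the local limit around the root, and an exponential tail bound uniform in $n$. Together these give uniform integrability of every order, and hence convergence of all moments.

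\textbf{Distributional convergence.} For fixed $k$, the event $\{\partial(\cT_n) \ge k\}$ says that no vertex within distance $k-1$ of the root is a leaf. This is determined by the ball of radius $k$ around the root, but that ball may contain a vertex of unbounded degree in the condensation regime. To handle this I use the version of local convergence in which each vertex's offspring is explored left to right. In $\hat{\cT}$ there is at most one vertex of infinite degree, and the finite-degree vertices have finite subtrees. Hence $\partial(\hat{\cT})$ is almost surely finite and is witnessed by a leaf reached through a finite piece of $\hat{\cT}$. Conversely, $\{\partial(\cT_n)\le k\}$ is detected on the finite pieces used in local convergence. Since $\partial(\hat\cT)<\infty$ almost surely, it is enough to show $\P(\partial(\cT_n)\le k)\to\P(\partial(\hat\cT)\le k)$ for each $k$, which follows from local convergence via a finite-truncation argument (lower and upper bounds by inspecting the first $M$ children of each vertex up to height $k$, then letting $M\to\infty$).

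\textbf{Uniform exponential tail.} I aim to find constants $C,c>0$ such that
\[
\P\bigl(\partial(\cT_n) \ge k\bigr) \le C e^{-ck} \quad\text{for all } n,k.
\]
First I reduce to a Galton--Watson tree $\cT$ with offspring law $\xi$, $\P(\xi=j)\propto \omega_j\tau^j$, conditioned on $|\cT|=n$. Then I follow the leftmost path from the root: go to the first child, then its first child, and so on. Along this path, $\partial(\cT_n)\ge k$ forces the first $k$ vertices to be non-leaves. Summing over tree shapes is awkward, so I use a cycle-lemma (Vervaat/Łukasiewicz) encoding instead. The tree corresponds to an i.i.d. sequence $\xi_1,\dots,\xi_n$ conditioned on $\sum(\xi_i-1)=-1$, up to a uniform cyclic shift. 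The leftmost path of length $k$ in the tree corresponds to the first $k$ entries of the Łukasiewicz word, since depth-first order visits the first child immediately. So $\partial(\cT_n)\ge k$ implies $\xi_1,\dots,\xi_k\ge1$ in the conditioned word.

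It then remains to bound $\P(\xi_1\ge1,\dots,\xi_k\ge 1 \mid S_n=-1)$, where $S_n=\sum_{i\le n}(\xi_i-1)$, uniformly in $n$. For $k\le n/2$ I use the local-limit-type ratio bound
\[
\sup_{m\ge n/2,\,x}\frac{\P(S_{m}=x)}{\P(S_n=-1)}\le C ,
\]
which holds in all three cases I, II and III. In case III, where there is no exponential moment, it follows from Janson's estimates for $\P(S_n=-1)$ and regular variation; I can also fall back on a cruder polynomial bound. Combining this with the Markov property gives a bound of $C(1-p_0)^k$, where $p_0=\P(\xi=0)>0$. For $k>n/2$, a tree with $n$ vertices and $\partial\ge k$ needs at least $k$ non-leaves on a path. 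Rather than using that directly, I apply the cyclic-shift argument to the reversed word, or a symmetry argument, to cover the second half of the word. An alternative is to run the same estimate on $\min$ over the first $\lceil n/2\rceil$ positions, since $\partial\ge k>n/2$ already implies $\xi_1,\dots,\xi_{\lceil n/2\rceil}\ge1$.

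The main obstacle is this uniform ratio bound in the non-generic cases II and III, where $\P(S_n=-1)$ decays subexponentially or polynomially in irregular ways. If a clean ratio bound is unavailable, I would instead use the bound that is polynomial in $n$ times $(1-p_0)^{k}$, and absorb the polynomial factor by splitting at $k\ge K\log n$. For $k \le K\log n$ I would use exchangeability within blocks, which gives exponential decay with a constant independent of $n$ because only $O(\log n)$ entries are conditioned.

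Finally, the exponential tail gives $\sup_n\E[\partial(\cT_n)^{p+1}]<\infty$ for every $p$. This yields uniform integrability of $\partial(\cT_n)^p$, and together with the distributional convergence, the convergence of moments.
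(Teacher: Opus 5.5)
Your distributional argument is essentially the paper's, namely the almost-sure continuity of $\partial$ under the law of $\hat{\cT}$, carried out by truncation. Two small corrections: for the upper bound, what you actually need is that a vertex of infinite degree in $\hat{\cT}$ almost surely has a leaf child; and spine vertices have finite degree but infinite subtrees.

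The genuine gap is your reduction of the tail to $\P(\xi_1\ge1,\dots,\xi_k\ge1\mid S_n=-1)$. The \L{}ukasiewicz word of $\cT_n$ is the i.i.d.\ sequence conditioned on the excursion event $\{S_n=-1,\ S_m\ge 0\text{ for all } m<n\}$, not on $\{S_n=-1\}$ alone. The bridge $(\xi\mid S_n=-1)$ is the \L{}ukasiewicz word rotated by an \emph{independent uniform} shift. Its first $k$ letters therefore encode the chain of first descendants of a \emph{uniformly random vertex}, not of the root. That is exactly the protection-number argument of Section~\ref{sec:protection_random_vertex}, which the paper explicitly says does not apply to the root. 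Your inequality already fails at $k=1$: $\P(\partial(\cT_n)\ge1)=1$ for $n\ge2$, while $\P(\xi_1\ge1\mid S_n=-1)=1-\E[L]/n\to1-p_0<1$. Similarly, for $\mathrm{Geom}(1/2)$ and $k=2$ the true limit is $c_2=4/9>1/4$.

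The missing idea is the conditional cycle lemma (Kemperman's formula). On $A_k$, with $H=\sum_{i\le k}(\xi_i-1)$, the completion is an excursion with conditional probability $(H+1)/(n-k)$. This gives $\P(\partial^{*}(\cT_n)\ge k)=\frac{n}{n-k}\,\E\bigl[(H+1)\mathbf 1_{A_k}\bigr]$, with the expectation taken under the bridge law. The extra weight $H+1$, which is of order $k$ on $A_k$, is precisely what separates the root from a uniform vertex. The resulting bound is of order $k(1-p_0)^{k-1}$ rather than $(1-p_0)^k$: still exponential, but it must be derived.

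Second, your ratio bound $\sup_{m\ge n/2,\,x}\P(S_m=x)/\P(S_n=-1)\le C$ is false in general, so even the bridge computation would not go through as planned.
\begin{itemize}
\item In type~II, $\P(S_n=-1)=n\P(|\cT|=n)$ is summable because $\E|\cT|=1/(1-\E\xi)<\infty$. Hence $n\P(S_n=-1)\to0$ along a subsequence of admissible $n$, whereas the law of large numbers gives $\max_x\P(S_n=x)\ge c/n$. The ratio is therefore unbounded.
\item In type~III there is no $\tau>0$ with $\Phi(\tau)<\infty$, so neither $\xi$ nor $S_n$ exists. This is not a matter of missing exponential moments.
\item Your polynomial fallback also fails in type~II. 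There $\P(S_n=-1)$ can be of order $e^{-c\sqrt n}$, for instance with $\omega_k=e^{-\sqrt k}$ for $k\ge1$ and $\omega_0$ large.
\end{itemize}

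The paper avoids every local estimate by conditioning on the multiset $\mathcal G_n$ of outdegrees. The uniformly rotated DFS word is then a uniform ordering of a fixed multiset, and prefixes are samples without replacement. Both $\P(\mathcal E_n\mid\mathcal G_n)=1/n$ and $\P(\mathcal E_n\mid\mathcal G_n,\text{prefix})=(H+1)/(n-k)$ then hold exactly. This yields the closed formula of Lemma~\ref{lem: master_identity} in terms of $\eta=L/n$ alone. The only probabilistic input is exponential concentration of $L/n$ around $p_0$.
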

The tree $\hat{\mathcal{T}}$ is given by Kesten's tree for type~I and a tree with condensation for types~II and~III. For type~III the limit distribution for the distance to the border degenerates and $\partial(\hat{\cT})=1$ almost surely. Our proof strategy uses the convergence of $\cT_n$ towards $\hat{\cT}$ in the local sense to deduce distributional convergence of $\partial(\cT_n)$. 	The challenge lies in verifying the uniform integrability required to ensure convergence of all moments. To this end, we decompose along the path towards the left-most leaf, conditionally count and bound configurations and finally apply concentration inequalities for degree counts in simply generated trees.

\begin{theorem}[Protection number of a random vertex]\label{te:protection}
Let $v_n$ be a uniformly chosen vertex of $\cT_n$. As $n\to\infty$ along those $n$
for which $\cT_n$ is defined,
\[
\partial_{\mathrm{p}}(\cT_n,v_n) \xrightarrow{d} \partial(\cT),
\qquad
\E\bigl[\partial_{\mathrm{p}}(\cT_n,v_n)^p\bigr] \to \E\bigl[\partial(\cT)^p\bigr]
\quad(p\ge 1),
\]
where $\cT$ is a Bienaym\'e--Galton--Watson tree with a specific offspring distribution $\xi$.
\end{theorem}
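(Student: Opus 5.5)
The distributional part is already available: by Devroye and Janson~\cite[Thm.~1.1]{zbMATH06346951}, $\partial_{\mathrm{p}}(\cT_n,v_n)$ converges in distribution to $\partial(\cT)$. Here $\cT$ is the Bienaym\'e--Galton--Watson tree whose offspring law is $\P(\xi=k)=\omega_k\tau^k/\phi(\tau)$, with $\tau$ the usual tilting parameter (critical in type~I, subcritical in types~II and~III). Equivalently, this follows from the fringe subtree convergence: the fringe subtree of $\cT_n$ at $v_n$ converges in distribution to $\cT$. Since $\cT$ is almost surely finite, $\partial$ is a.s.\ continuous at $\cT$ for the local topology, so the continuous mapping theorem applies.

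It therefore suffices to prove uniform integrability of all orders. I will do this through a uniform exponential tail bound: there exist $C,c>0$ such that for all admissible $n$ and all $k\ge 0$,
\[
\P\bigl(\partial_{\mathrm{p}}(\cT_n,v_n)\ge k\bigr)\le C e^{-ck}.
\]
Averaging over $v_n$ turns the left-hand side into $\E[N_k(\cT_n)]/n$, where $N_k(T)$ is the number of vertices of $T$ whose fringe subtree has protection number at least $k$. So the task is to show that the expected proportion of $k$-protected vertices decays exponentially in $k$, uniformly in $n$.

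I will use the standard fringe-counting argument. Write $\cT_n$ as the conditioned tree $\cT\mid |\cT|=n$ when $\omega$ is normalized to a probability law; all types reduce to this after tilting. Then
\[
\E[N_k(\cT_n)] = \sum_{m\le n} \P(|\cT|=m,\ \partial(\cT)\ge k)\cdot \frac{n\,\P(\text{fringe at a vertex has size } m \text{ in a tree of size } n)}{\P(|\cT|=m)},
\]
with the second factor made explicit via the cycle lemma. I expect the cleaner route to be a direct bound avoiding local limit theorems in the tail. First, $\partial(T)\ge k$ forces every vertex within distance $k-1$ of the root to have at least one child, so the first $k$ generations contain no leaves. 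Second, the left-most path from the root has length at least $k$ and consists of internal vertices. Decompose $\cT_n$ along the path from $v_n$ to the left-most leaf of its fringe subtree; this is the decomposition announced for Theorem~\ref{te:root}. The event then requires $k$ consecutive ancestors-in-fringe each having outdegree $\ge1$, each with their own left-most descendants non-leaves. Conditionally on the degree sequence, a uniform vertex and its left-most descendant chain behave like sampling degrees without replacement from the degree multiset. Each step has probability at most the proportion of non-leaf vertices, roughly $1-\P(\xi=0)+o(1)$. Since $\P(\xi=0)=\omega_0/\phi(\tau)>0$, this gives geometric decay $q^k$ with $q<1$.

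To make this uniform I will use a concentration inequality: the number of leaves of $\cT_n$ is at least $\delta n$ with probability $1-e^{-c n}$ for some $\delta>0$. Such bounds hold in all three types, because leaves are typical even under condensation. On this event, a chain of $k$ draws without replacement, with $k\le \delta n/2$, avoids leaves with probability at most $(1-\delta/2)^k$. For $k>\delta n/2$ the tail is trivially handled by the $e^{-cn}$ term together with $\partial_{\mathrm{p}}\le n$.

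The main obstacle is justifying the exchangeability step: conditionally on the degree sequence, the outdegrees met along the left-most descending chain from a uniform vertex must be dominated by draws without replacement. I would try to prove this via the Łukasiewicz path and cyclic shift representation. A uniform vertex corresponds to a uniform position in a cyclically exchangeable degree sequence. The left-most child of the vertex at position $i$ is at position $i+1$ whenever the degree is $\ge1$. So the chain is exactly the $k$ consecutive entries after a uniform cyclic position, and these entries must all be nonzero. This is a hypergeometric-type event, bounded by $\prod_{j<k}(1-L/(n-j))$, where $L$ is the leaf count. This makes the argument clean and fully general. Uniform integrability of every order follows, and with the distributional limit it yields convergence of all moments.
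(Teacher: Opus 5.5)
Your proposal is correct and follows essentially the paper's route. The distributional limit comes from Devroye--Janson fringe convergence. For the tail, the leftmost-child chain from $v_n$ occupies consecutive DFS positions, and a uniform cyclic rotation of the uniform \L{}ukasiewicz ordering is uniform on all orderings of the degree multiset (by the cycle lemma). This gives the hypergeometric bound $\prod_{j<k}(1-L/(n-j))\le(1-L/n)^k$, and exponential leaf concentration handles the atypical event.

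Some parts are unnecessary but harmless. The preliminary fringe-counting formula is not needed, and neither is the restriction $k\le\delta n/2$, since $(1-L/n)^k$ bounds the product for every $k$. Type~III is not a tilted conditioned Galton--Watson tree, but your cyclic-shift argument only uses that the weight depends on the degree multiset, so this does no harm.
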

	
	As mentioned above, the distributional convergence $\partial_{\mathrm{p}}(\cT_n,v_n) \xrightarrow{d} \partial(\cT)$ is due to Devroye and Janson~\cite{zbMATH06346951}. We contribute tail bounds that ensure that all moments converge as well.
	\smallskip
    
	The limit constants appearing in these results, in the critical case, are well known in the combinatorial literature, particularly in the study of \emph{$k$-protected nodes}; see, for instance, \cite{zbMATH06346951, Macia2022, Protection-Clemens, Protection1-mean, Protection-Gisang}. Their connection to the distance to the boundary of Kesten's tree, however, had not been previously identified. Rather than relying on traditional techniques in analytic combinatorics, such as the asymptotic analysis of generating functions, our approach leverages the properties of Kesten's tree and the condensation tree to derive these limit results in a direct and straightforward manner. We thereby provide a probabilistic interpretation of the underlying structure of the problem and an explicit description of the relevant quantities.

\smallskip

Our method generalises, simplifies, and unifies the derivation of these moment formulas, and bridges the probabilistic and combinatorial perspectives via the local limits. 

\smallskip

The analogous question for the \emph{height} of a tree is a classical topic that has been extensively investigated in both the probabilistic and combinatorial literature~\cite{Renyi,deBruijnKnuthRice,FlajoletOdlyzko,zbMATH07940224,MR3077536,MR1085326}.
Although the height and the distance to the border are extreme statistics of the same type, longest versus shortest root-to-leaf path, they exhibit very different asymptotic behaviour: the height typically grows  as the number of vertices tends to infinity, while the distance to the border remains tight. The present paper is concerned exclusively with the latter quantity. 

\subsection*{Organisation of the paper}

The paper is organised as follows. Section~\ref{sec:sgt} introduces the simply generated tree model, the associated probability measure, and the classification of weight sequences into types~I, II and~III, following Janson~\cite{Janson2012}. Section~\ref{sec:diboro} treats the distance to the border from the root vertex: we recall the construction of the relevant local limits, Kesten's tree and the condensation tree, prove convergence in distribution of $\partial(\cT_n)$ to the corresponding limit in each regime, and establish a uniform exponential tail bound for $\partial(\cT_n)$, from which uniform integrability and convergence of all moments follow. Sections~\ref{sec:protection_random_vertex} and~\ref{sec:distance_random_vertex} carry out the parallel programme for the protection number $\partial_{\mathrm{p}}(\cT_n,v_n)$ and the distance to the border $\partial(\cT_n,v_n)$ of a uniformly chosen vertex; the additional ingredient is a chain-of-first-descendants construction accounting for the random choice of vertex. The paper closes with a section collecting a few applications.

\subsection*{Acknowledgements}
The first author gratefully acknowledges Prof. José L. Fernández for bringing this problem to his attention and Prof. Christina Goldschmidt for helpful discussions. He also thanks the Institute for Discrete Mathematics and Geometry at TU Wien for its hospitality and support.

\subsection*{Funding} This research was funded in part by the Austrian Science Fund (FWF) 10.55776/F1002 and 10.55776/PAT6732623. For open access purposes, the authors have applied a CC BY public copyright license to any author-accepted manuscript version arising from this submission. Research of V.\,J. Maci\'a was partially funded by grants PID2023-148028NB-I00 and PID2021-123151NB-I00 from the Spanish Government.

\section{Notations}
Let $(X_n)_{n\ge1}$ be a sequence of random variables and $X$ a random variable. We write
\[
X_n \xrightarrow{d} X \qquad \text{as } n\to\infty
\]
for convergence in distribution of $X_n$ to $X$. On a probability space
$(\Omega,\mathcal F,\mathbb P)$ we denote by $\E$ and $\V$ the expectation and variance, respectively. Unspecified limits are always taken as $n \to \infty$.

\vspace{.2 cm}

\noindent\emph{Boundary functionals.}
For a rooted plane tree $T$ we write $\mathrm{leaves}(T)$ for its set of leaves and
$d(\cdot,\cdot)$ for the graph distance. The  \emph{distance to the
border} of $T$ is given by
\[
\partial(T) = \min_{u \in \mathrm{leaves}(T)} d(\mathrm{root}, u).
\]
For infinite trees $T$ it can happen that $\mathrm{leaves}(T)=\emptyset$. In this case we set $\partial(T) = \infty$. More generally, for a vertex $v$ of $T$,
\[
\partial(T,v) = \min_{u \in \mathrm{leaves}(T)} d(v, u),
\]
is the \emph{distance to the border from $v$}, so that $\partial(T)=\partial(T,\mathrm{root})$.
Writing $\mathrm{fringe}_v(T)$ for the fringe subtree of $T$ at $v$ (the subtree consisting of
$v$ and all its descendants), the \emph{protection number} of $v$ is the distance to the
border within that fringe,
\[
\partial_{\mathrm{p}}(T,v) = \partial\bigl(\mathrm{fringe}_v(T)\bigr)
= \min_{u \in \mathrm{leaves}(\mathrm{fringe}_v(T))} d(v,u).
\]
Every leaf of $\mathrm{fringe}_v(T)$ is a leaf of $T$, while $v$ may also reach leaves of $T$
outside its fringe; hence $\partial(T,v) \le \partial_{\mathrm{p}}(T,v)$
(see Figure~\ref{fig:three_statistics}).

\bigskip
\noindent\emph{Random trees and their local limits.}
Let $\cT$ be the Bienaym\'e--Galton--Watson tree with offspring distribution $\xi$ associated
with the weight sequence $(\omega_k)_{k \ge 0}$ through the tilting of Section~\ref{sec:sgt}, and let
$\cT_n$ be the simply generated tree of size $n$. 
In types~I and~II, $\cT_n$ may be interpreted as $\cT$ conditioned to have $n$ vertices. The relevant limits are
\vspace{.2 cm}
\[
\left\{
\begin{array}{lcl}
	\displaystyle \cT &\colon& \text{the Bienaym\'e--Galton--Watson tree with offspring distribution }\xi,\\[1mm]
	\displaystyle \hat{\cT} &\colon& \text{the local limit of $\cT_n$ around the root},\\[1mm]
	\displaystyle \cT^{*} &\colon& \text{the local limit of $\cT_n$ around a uniformly chosen vertex (pointed limit).}
\end{array}
\right.
\]
\vspace{.2 cm}

\noindent See Section~\ref{sec:local_limits} for the construction of $\hat{\mathcal{T}}$ and Section~\ref{sec:tstar} for the definition of $\mathcal{T}^*$.

\begin{figure}[H]
\centering
\resizebox{0.5\linewidth}{!}{%
\begin{tikzpicture}[scale=0.82,line cap=round,line join=round,
  int/.style ={circle,draw=cInt,fill=cInt,minimum size=4.3pt,inner sep=0},
  leaf/.style={circle,draw=cInt,fill=white,line width=.7pt,minimum size=4.3pt,inner sep=0},
  ed/.style  ={gray!55,line width=0.7pt},
  hl/.style  ={line width=1.7pt}]
\fill[pProt!6,rounded corners=8pt] (1.78,-2.02) rectangle (3.66,-6.02);
\node[pProt!70,font=\scriptsize] at (2.72,-6.32) {fringe$(v)$};
\coordinate (o)  at (0,0);
\coordinate (L)  at (-2.3,-1.15); \coordinate (m) at (-0.5,-1.15); \coordinate (p) at (1.9,-1.15);
\coordinate (L1) at (-3.2,-2.3);  \coordinate (L2) at (-1.5,-2.3);
\coordinate (b)  at (1.1,-2.3);   \coordinate (v) at (2.8,-2.3);
\coordinate (L11) at (-3.7,-3.45);\coordinate (L12) at (-2.7,-3.45);
\coordinate (v1) at (2.3,-3.45);  \coordinate (v2) at (3.4,-3.45);
\coordinate (L121) at (-3.0,-4.6);\coordinate (L122) at (-2.4,-4.6);
\coordinate (v11) at (2.05,-4.6); \coordinate (v12) at (2.65,-4.6); \coordinate (v21) at (3.4,-4.6);
\coordinate (d1) at (2.05,-5.75); \coordinate (d2) at (2.65,-5.75); \coordinate (d3) at (3.4,-5.75);
\draw[ed] (o)--(L); \draw[ed] (o)--(m); \draw[ed] (o)--(p);
\draw[ed] (L)--(L1); \draw[ed] (L)--(L2);
\draw[ed] (L1)--(L11); \draw[ed] (L1)--(L12); \draw[ed] (L12)--(L121); \draw[ed] (L12)--(L122);
\draw[ed] (p)--(b); \draw[ed] (p)--(v);
\draw[ed] (v)--(v1); \draw[ed] (v)--(v2);
\draw[ed] (v1)--(v11); \draw[ed] (v1)--(v12); \draw[ed] (v2)--(v21);
\draw[ed] (v11)--(d1); \draw[ed] (v12)--(d2); \draw[ed] (v21)--(d3);
\draw[pProt,hl] (v)--(v1)--(v11)--(d1);
\draw[pVert,hl] (v)--(p)--(b);
\draw[pRoot,hl] (o)--(m);
\foreach \n in {o,L,p,L1,L12,v1,v2,v11,v12,v21}{ \node[int] at (\n){}; }
\foreach \n in {m,L2,b,L11,L121,L122,d1,d2,d3}{ \node[leaf] at (\n){}; }
\draw[pVert,line width=1.3pt,fill=cInt] (v) circle (3.0pt);
\node[font=\small] at (0,0.32) {$o$};
\node[font=\small] at (3.02,-2.22) {\, $v$};
\node[anchor=west,font=\footnotesize] at (-3.7,-6.92)
  {\textcolor{pRoot}{$\partial(T)=1$}\quad\textcolor{pVert}{$\partial(T,v)=2$}\quad\textcolor{pProt}{$\partial_{\mathrm p}(T,v)=3$}};
\end{tikzpicture}%
}
\caption{The three boundary statistics on a plane tree (internal vertices filled, leaves
white; the marked vertex $v$ is ringed and its fringe subtree shaded): $\partial(T)$,
$\partial(T,v)$ and $\partial_{\mathrm p}(T,v)$. }
\label{fig:three_statistics}
\end{figure}

\section{Simply generated trees}\label{sec:sgt}
In this section we introduce the simply generated tree model,
recall the classification of weight sequences into
types~I, II and~III following Janson~\cite{Janson2012} and clarify the connection to
Bienaym\'e--Galton--Watson trees.

\subsection{The model}

Let $\mathbf{w} = (\omega_k)_{k \ge 0}$ be a sequence of non-negative weights
with $\omega_0 > 0$ and $\omega_k > 0$ for at least one $k \ge 2$.
Let
\[
\Phi(z) := \sum_{k=0}^{\infty} \omega_k z^k
\]
be a power series, and let $R \in [0,\infty]$ denote its radius of
convergence. For each $n \ge 1$ such that at least one rooted plane tree
with $n$ vertices has positive weight, the \emph{simply generated tree}
$\cT_n$ is the random rooted plane tree with $n$ vertices whose law
assigns to every $n$-vertex rooted plane tree $T$ a probability
proportional to its \emph{weight}
\[
w(T) := \prod_{v \in T} \omega_{d^+_T(v)},
\]
where $d^+_T(v)$ denotes the out-degree of vertex $v$ in $T$:
\begin{equation}\label{eq:sgt_prob}
	\P({\cT_n = T})
	= \frac{w(T)}{\displaystyle\sum_{\substack{T':\, |T'|=n}} w(T')}.
\end{equation}
Throughout this paper we will implicitly restrict $n$ to values for which~\eqref{eq:sgt_prob} is
well-defined, i.e., for which the denominator is strictly positive.

\subsection{Relation to Bienaym\'e--Galton--Watson trees}

We shall use the following terminology. Let $\mathcal K$ denote the
class of non-constant power series
\[
f(z)=\sum_{k=0}^{\infty} a_k z^k
\]
with positive radius of convergence, non-negative Taylor coefficients,
and $a_0>0$. If $f\in\mathcal K$ has radius of convergence $R_f>0$,
its \emph{Khinchin family} is the family of random variables
$(X_t)_{t\in[0,R_f)}$ with values in $\mathbb N_0 = \{0,1,2,\dots\}$ and distribution
\[
\P(X_t=k)=\frac{a_k t^k}{f(t)}, \qquad k\geq 0.
\]

In the present setting, whenever $R>0$, the weight generating function
$\Phi$ belongs to $\mathcal K$, and we denote by
$(X_t)_{t\in(0,R)}$ its associated Khinchin family:
\[
\P(X_t = k) \;=\; \frac{\omega_k\,t^k}{\Phi(t)}, \qquad k\geq 0.
\]
The mean offspring is
\[
m_\Phi(t) \;:=\; \E[X_t]
\;=\; \frac{t\,\Phi'(t)}{\Phi(t)}.
\]

A fundamental feature of the model is that, for every $t\in(0,R)$ (or $t=R$ if $\Phi(R)<\infty$),
the law of $\cT_n$ coincides with the law of a $X_t$-BGW tree $\cT$
conditioned on having $n$ vertices.

\subsection{Types of weight sequences}\label{subsec:types}

We follow the classification of~\cite{Janson2012} and~\cite{zbMATH07039768}. In all cases we define an offspring distribution $\xi$ for a BGW $\mathcal{T}$.
\medskip

\paragraph{Type~I}
The weight sequence $\mathbf{w}$ is of \emph{type~I} if there exists
$\tau \in (0,R]$ such that $\Phi(\tau)<\infty$ and
$m_\Phi(\tau)=1$. In this case $X_\tau$ is critical, and we set
$\xi:=X_\tau$. 

\smallskip

\paragraph{Type~II}
The weight sequence $\mathbf{w}$ is of \emph{type~II} if $R>0$,
$\Phi(R)<\infty$, and
\[
m_\Phi(R) \;=\; \frac{R\,\Phi'(R)}{\Phi(R)} \;<\; 1.
\]
In this case the Khinchin family extends to the boundary point $t=R$,
where
\[
\P(X_R = k) \;=\; \frac{\omega_k R^k}{\Phi(R)}, \qquad k\geq 0,
\]
defines a subcritical offspring distribution. We take $\xi:=X_R$.

\smallskip

\paragraph{Type~III}
The weight sequence $\mathbf{w}$ is of \emph{type~III} if $R=0$. In this case
$\Phi$ does not belong to $\mathcal K$, and no Khinchin family is
available in the above sense. It will be notationally convenient to set $\xi=0$, so that $\mathcal{T}$ consists of a single vertex.

\section{The distance to the border from the root vertex}

\label{sec:diboro}

In this section we study the distance to the border $\partial(\cT_n)$ of a
size-conditioned simply generated tree, measured from its root, in the three regimes of
Section~\ref{subsec:types}. After fixing the topological setting on the spaces of plane
trees $(\T,\delta)$ and $(\T_{\infty},\delta_{\infty})$, we prove that $\partial(\cT_n)$
converges in distribution to $\partial(\hat{\cT})$, the distance to the border of the
local limit $\hat{\cT}$ around the root, by combining the local convergence
$\cT_n\xrightarrow{d}\hat{\cT}$ with the almost-sure continuity of the functional
$\partial$ under the limiting law. We then establish a uniform exponential tail bound for
$\partial(\cT_n)$, valid in all three types and with no assumption on the weight sequence,
from which uniform integrability and the convergence of all moments of $\partial(\cT_n)$
to those of $\partial(\hat{\cT})$ follow.
\subsection{Topology}
We denote by \(\T\) the set of rooted plane trees in which every node has
a finite number of children (although the trees themselves may be infinite),
and by \(\T_0\) the set of finite rooted plane trees (i.e., trees with a
finite number of nodes). We use the Ulam--Harris labelling and Neveu's
formalism; see, for instance, \cite{Neveu,AbrahamDelmasnotes}
for further details. It is well known that \(\T\), endowed with the metric
\[
\delta(T,T') = 2^{-\sup\{h \geq 0 \, : \, r_h(T) = r_h(T')\}},
\]
is a Polish space (see, for instance, \cite{AbrahamDelmasnotes}). Here, \(r_h(T)\) denotes the restriction function that gives the tree obtained by pruning \(T\) at level \(h\) (i.e., keeping the nodes at level \(h\) as leaves). 
\smallskip

We denote $\T_{\infty}$ the set of rooted plane trees (we allow nodes with an infinite number of children), see \cite{AbrahamDelmasnotes} and \cite{Neveu}. Using Neveu's formalism, see \cite{Neveu}, for a node $u = (u_1,\dots,u_n) \in (\mathbb{N} \setminus\{0\})^{n}$ denote $|u| = n$ its length and define
\begin{align*}
	|u|_{\infty} = \max\{|u|,u_1,\dots,u_{|u|}\}\,,
\end{align*}
with the convention $|\emptyset| = |\emptyset|_{\infty} = 0$. Using $|u|_{\infty}$ we define the restriction map ${r}_{h,\infty}:\T_{\infty} \rightarrow \T_{\infty}$ as 
\begin{align*}
	{r}_{h,\infty}(T) = \{u \in T : |u|_{\infty} \leq h\}\,,
\end{align*}
that is, we keep only the nodes at depth at most $h$ whose Ulam--Harris coordinates are all bounded by $h$. Define 
\begin{align*}
	\delta_{\infty}(T,T') = 2^{-\sup\{h \geq 0 \, : \, r_{h,\infty}(T) = r_{h,\infty}(T')\}}\,.
\end{align*}
Equipped with this metric, $(\T_{\infty},\delta_{\infty})$ is a compact Polish metric space; see, for instance, \cite{AbrahamDelmas} for further details.	
\smallskip

Following \cite{AbrahamDelmas} and \cite{AbrahamDelmasnotes} we denote
\[
\T_0^{\star}
:=
\bigl\{
T \in \T_{\infty}
:\ \text{$T$ has no infinite branch}
\bigr\}.
\]
and
\[
\T_2
:=
\Bigl\{
T \in \T_{\infty}
:\ \#\{u \in T : k_u(T)=+\infty\}=1
\Bigr\}
\cap
\T_0^{\star}\,.
\]
\smallskip

For a rooted tree \(T\), we denote by \(\mathrm{leaves}(T)\) its set of leaves, by \(|T|\) the number of nodes in \(T\), and by \(k_u(T)\) the number of children of the node \(u\) in \(T\) (with the convention that if \(u \notin T\), then \(k_u(T) = -1\)). Throughout, we let \(d(u,v)\) denote the standard graph distance between vertices \(u\) and \(v\) (i.e., the number of edges in the shortest path connecting them).
\smallskip

\subsection{The limiting objects}

\label{sec:local_limits}

We recall Janson's unified construction of the local limit of $\cT_n$ around the
root, valid in all three regimes; see \cite[\S5 and Thm.~7.1]{Janson2012}. Let
$(p_k)_{k\ge0}$ be the canonical probability weight sequence equivalent to the weight
sequence (Section~\ref{subsec:types}), with offspring variable $\xi$,
 with probability distribution $\P(\xi=k)=p_k$ and mean
$m:=\E(\xi)\le1$. Write $\cT$ for the Bienaym\'e--Galton--Watson tree with offspring
$\xi$.
\smallskip

Following Kesten and Jonsson--Stef\'ansson (see \cite[\S5]{Janson2012}), define a
random infinite tree $\hat{\cT}$ with two kinds of vertices, \emph{normal} and
\emph{special}, the root being special. Normal vertices have offspring given by
independent copies of $\xi$, while special vertices have offspring given by
independent copies of $\hat{\xi}$, where
\[
\P(\hat{\xi}=k)=
\begin{cases}
	k\,p_k, & k=0,1,2,\dots,\\[1mm]
	1-m, & k=\infty,
\end{cases}
\]
a probability distribution on $\{1,2,\dots\}\cup\{\infty\}$. All children of a
normal vertex are normal; if a special vertex has infinitely many children, all are
normal; if it has $k<\infty$ children, exactly one of them, chosen uniformly at
random, is special and the remaining $k-1$ are normal. The special vertices form a
path from the root, the \emph{spine} of $\hat{\cT}$. By \cite[Thm.~7.1]{Janson2012},
$\cT_n\xrightarrow{d}\hat{\cT}$ in the local topology, along those $n$ for which
$\cT_n$ is defined.
\smallskip

The classification into types records the qualitative behaviour of $\hat{\cT}$:
\begin{itemize}\setlength{\itemsep}{0.7em}
	\item In \emph{type~I} ($m=1$) we have $\hat{\xi}<\infty$ almost surely, so every special
	individual has a special child: the spine is an infinite path and $\hat{\cT}$ is locally
	finite. In this case $\hat{\xi}$ is the size-biased law of $\xi$ and $\hat{\cT}$ is
	Kesten's size-biased Galton--Watson tree.
	\item In \emph{type~II} ($0<m<1$) a special individual has no special child with
	probability $1-m$, so the spine is almost surely finite, its length following the
	geometric law $\P(G=\ell)=(1-m)\,m^{\ell-1}$, $\ell\ge1$. The top of the spine is almost
	surely the unique vertex of infinite outdegree, and $\hat{\cT}$ is the condensation tree
	of Jonsson--Stef\'ansson; see also \cite{AbrahamDelmasnotes,AbrahamDelmas}.
	\item In \emph{type~III} ($m=0$) we have $\hat{\xi}=\infty$ almost surely, so the root is
	the only special individual and $\hat{\cT}$ is the infinite star: the root carries
	infinitely many children, all of them leaves. In particular $\partial(\hat{\cT})=1$
	almost surely.
\end{itemize}

\begin{figure}[H]
	\centering
	\resizebox{0.9\linewidth}{!}{%
		\begin{tikzpicture}[line cap=round,line join=round,
			int/.style ={circle,draw=cInt,fill=cInt,minimum size=4.5pt,inner sep=0},
			leaf/.style={circle,draw=cInt,fill=white,line width=.7pt,minimum size=4.5pt,inner sep=0},
			ed/.style  ={gray!55,line width=0.7pt},
			sp/.style  ={line width=1.05pt,gray!70},
			gw/.style  ={fill=gray!16,draw=gray!50,line width=.5pt},
			near/.style={pRoot,line width=1.6pt}]
			\begin{scope}[shift={(0,0)}]
				\node[font=\bfseries] at (0,1.0) {type I};
				\coordinate (o) at (0,0); \coordinate (u1) at (0,-1.65); \coordinate (u2) at (0,-3.3);
				\draw[sp] (o)--(u1)--(u2); \draw[sp,dashed] (u2)--(0,-4.1); \node at (0,-4.4) {$\vdots$};
				\draw[ed] (o)--(0.72,-0.8);  \fill[gw] (0.72,-0.8)--(0.46,-1.35)--(0.98,-1.35)--cycle;   \node[font=\scriptsize,gray!55!black] at (0.72,-1.1){$\cT$};
				\draw[ed] (o)--(1.78,-0.8);  \fill[gw] (1.78,-0.8)--(1.52,-1.35)--(2.04,-1.35)--cycle;   \node[font=\scriptsize,gray!55!black] at (1.78,-1.1){$\cT$};
				\node[font=\footnotesize] at (1.25,-0.86) {$\cdots$};
				\draw[ed] (o)--(-1.78,-0.8); \fill[gw] (-1.78,-0.8)--(-2.04,-1.35)--(-1.52,-1.35)--cycle; \node[font=\scriptsize,gray!55!black] at (-1.78,-1.1){$\cT$};
				\node[font=\footnotesize] at (-1.25,-0.86) {$\cdots$};
				\draw[ed] (o)--(-0.72,-0.8); \fill[gw] (-0.72,-0.8)--(-0.98,-1.35)--(-0.46,-1.35)--cycle; \node[font=\scriptsize,gray!55!black] at (-0.55,-1.13){$\cT$};
				\draw[ed] (u1)--(0.72,-2.45);  \fill[gw] (0.72,-2.45)--(0.46,-3.0)--(0.98,-3.0)--cycle;   \node[font=\scriptsize,gray!55!black] at (0.72,-2.75){$\cT$};
				\draw[ed] (u1)--(1.78,-2.45);  \fill[gw] (1.78,-2.45)--(1.52,-3.0)--(2.04,-3.0)--cycle;   \node[font=\scriptsize,gray!55!black] at (1.78,-2.75){$\cT$};
				\node[font=\footnotesize] at (1.25,-2.51) {$\cdots$};
				\draw[ed] (u1)--(-0.72,-2.45); \fill[gw] (-0.72,-2.45)--(-0.98,-3.0)--(-0.46,-3.0)--cycle; \node[font=\scriptsize,gray!55!black] at (-0.72,-2.75){$\cT$};
				\draw[ed] (u1)--(-1.78,-2.45); \fill[gw] (-1.78,-2.45)--(-2.04,-3.0)--(-1.52,-3.0)--cycle; \node[font=\scriptsize,gray!55!black] at (-1.78,-2.75){$\cT$};
				\node[font=\footnotesize] at (-1.25,-2.51) {$\cdots$};
				\coordinate (w) at (-0.72,-0.8); \coordinate (l) at (-0.8,-1.2);
				\draw[near] (o)--(w)--(l);
				\node[int] at (o){}; \node[int] at (u1){}; \node[int] at (u2){}; \node[int] at (w){}; \node[leaf] at (l){};
				\node[font=\small,anchor=south] at (0,0.12) {$o$};
				\node[pRoot,font=\small,anchor=east] at (-1.45,-1.55) {$\partial(\hat{\cT})$};
			\end{scope}
			\begin{scope}[shift={(5.6,0)}]
				\node[font=\bfseries] at (0,1.0) {type II};
				\coordinate (o) at (0,0); \coordinate (s1) at (0,-1.5); \coordinate (s2) at (0,-3.6);
				\draw[sp] (o)--(s1)--(0,-2.7); \node at (0,-2.95) {$\vdots$}; \draw[sp] (0,-3.2)--(s2);
				\draw[ed] (o)--(0.65,-0.55);  \fill[gw] (0.65,-0.55)--(0.41,-1.12)--(0.89,-1.12)--cycle;   \node[font=\scriptsize,gray!55!black] at (0.65,-0.86){$\cT$};
				\draw[ed] (o)--(1.7,-0.55);   \fill[gw] (1.7,-0.55)--(1.46,-1.12)--(1.94,-1.12)--cycle;     \node[font=\scriptsize,gray!55!black] at (1.7,-0.86){$\cT$};
				\node[font=\footnotesize] at (1.16,-0.6) {$\cdots$};
				\draw[ed] (o)--(-0.65,-0.55); \fill[gw] (-0.65,-0.55)--(-0.89,-1.12)--(-0.41,-1.12)--cycle; \node[font=\scriptsize,gray!55!black] at (-0.65,-0.86){$\cT$};
				\draw[ed] (o)--(-1.7,-0.55);  \fill[gw] (-1.7,-0.55)--(-1.94,-1.12)--(-1.46,-1.12)--cycle;  \node[font=\scriptsize,gray!55!black] at (-1.7,-0.86){$\cT$};
				\node[font=\footnotesize] at (-1.16,-0.6) {$\cdots$};
				\draw[ed] (s1)--(0.65,-2.05);  \fill[gw] (0.65,-2.05)--(0.41,-2.62)--(0.89,-2.62)--cycle;   \node[font=\scriptsize,gray!55!black] at (0.65,-2.36){$\cT$};
				\draw[ed] (s1)--(1.7,-2.05);   \fill[gw] (1.7,-2.05)--(1.46,-2.62)--(1.94,-2.62)--cycle;     \node[font=\scriptsize,gray!55!black] at (1.7,-2.36){$\cT$};
				\node[font=\footnotesize] at (1.16,-2.1) {$\cdots$};
				\draw[ed] (s1)--(-0.65,-2.05); \fill[gw] (-0.65,-2.05)--(-0.89,-2.62)--(-0.41,-2.62)--cycle; \node[font=\scriptsize,gray!55!black] at (-0.65,-2.36){$\cT$};
				\draw[ed] (s1)--(-1.7,-2.05);  \fill[gw] (-1.7,-2.05)--(-1.94,-2.62)--(-1.46,-2.62)--cycle;  \node[font=\scriptsize,gray!55!black] at (-1.7,-2.36){$\cT$};
				\node[font=\footnotesize] at (-1.16,-2.1) {$\cdots$};
				\foreach \x in {-1.5,-1.0,-0.5,0.5,1.0,1.5}{ \draw[ed] (s2)--(\x,-4.8); }
				\node at (0,-5.0) {$\cdots$};
				\draw[near] (o)--(s1)--(0,-2.7); \draw[near] (0,-3.2)--(s2)--(-0.5,-4.8);
				\node[int] at (o){}; \node[int] at (s1){}; \node[int] at (s2){};
				\foreach \x in {-1.5,-1.0,-0.5,0.5,1.0,1.5}{ \node[leaf] at (\x,-4.8){}; }
				\node[font=\small,anchor=south] at (0,0.12) {$o$};
				\node[font=\footnotesize,anchor=west] at (0.25,-3.7) {$\deg=\infty$};
				\node[pRoot,font=\small,anchor=east] at (-1.15,-4.3) {$\partial(\hat{\cT})$};
			\end{scope}
			\begin{scope}[shift={(10.8,0)}]
				\node[font=\bfseries] at (0,1.0) {type III};
				\coordinate (o) at (0,0);
				\foreach \x in {-1.5,-1.0,-0.5,0.5,1.0,1.5}{ \draw[ed] (o)--(\x,-1.3); }
				\node at (0,-1.5) {$\cdots$};
				\draw[near] (o)--(-0.5,-1.3);
				\node[int] at (o){};
				\foreach \x in {-1.5,-1.0,-0.5,0.5,1.0,1.5}{ \node[leaf] at (\x,-1.3){}; }
				\node[font=\small,anchor=west] at (0.12,0.04) {$o$};
				\node[font=\footnotesize,anchor=west] at (1.05,-0.5) {$\deg=\infty$};
				\node[pRoot,font=\small,anchor=east] at (-0.72,-0.5) {$\partial(\hat{\cT})=1$};
			\end{scope}
		\end{tikzpicture}%
	}
	\caption{The root local limit $\hat{\cT}$ in the three regimes, with the distance to the
		border $\partial(\hat{\cT})$ highlighted.}
	\label{fig:local_limits}
\end{figure}

\subsection{Convergence in distribution}\label{sec:conv_dist_root}
We prove that $\partial(\cT_n)$ converges in distribution to $\partial(\hat{\cT})$ for weight
sequences of all three types, where $\hat{\cT}$ is the local limit around the root described
in Section~\ref{sec:local_limits}. The argument is uniform: by Janson~\cite[Thm.~7.1]{Janson2012}
one has $\cT_n\xrightarrow{d}\hat{\cT}$ in the local topology, and the conclusion follows from
the mapping theorem once we know that the functional $\partial$ is almost surely continuous
under the limiting law. The only step that depends on the type is the verification that the
discontinuity set of $\partial$ is negligible, which we carry out case by case. We recall that
throughout this paper every offspring distribution satisfies $p_0+p_1<1$ (except in the type III case where $p_0 = 1$).
\smallskip

In the locally finite setting $(\T,\delta)$ the event $\{\partial\ge k\}$ is in fact clopen,
which gives at once the Borel measurability of $\partial$ and, in the critical case, a direct
proof; we record this for later use.

\begin{lem}\label{lem: clopen} Fix an integer $k \geq 0$. The set of trees $\T_k:=\{T\in\T:\partial(T)\ge k\}$ is an open and closed set of the metric space $(\T,\delta)$. Consequently the map $\partial : \T \rightarrow \mathbb{N} \cup \{0\}$ is Borel measurable. 
\end{lem}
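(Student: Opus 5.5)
The plan is to show that membership in $\T_k$ is decided by the restriction $r_k(T)$, i.e.\ by the first $k$ generations. Openness and closedness then follow because $\delta(T,T')<2^{-k}$ forces $r_k(T)=r_k(T')$.

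\emph{Step 1 (locality).} For $T\in\T$ and $k\ge 1$, we have $\partial(T)\ge k$ if and only if no vertex of $T$ at depth at most $k-1$ is a leaf. If some vertex at depth $j\le k-1$ is a leaf, then $\partial(T)\le j<k$. Conversely, if $\partial(T)=j<k$ is finite, there is a leaf at depth $j\le k-1$. The case $\partial(T)=\infty$, where there are no leaves at all, is consistent with both sides. For a vertex $u$ with $|u|\le k-1$, whether $u$ is a leaf in $T$ can be read off from $r_k(T)$: its children, if any, lie at depth at most $k$, and they are kept in $r_k(T)$. The only care needed is that vertices at depth exactly $k$ become artificial leaves in $r_k(T)$, which is why we test only depths at most $k-1$. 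Hence there is a set $A_k$ of finite trees of height at most $k$ such that $T\in\T_k$ if and only if $r_k(T)\in A_k$. For $k=0$ we have $\T_0=\T$ trivially.

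\emph{Step 2 (clopen).} Let $T\in\T_k$ and $T'$ with $\delta(T,T')<2^{-k}$. Then $\sup\{h: r_h(T)=r_h(T')\}>k$, and since restrictions are nested, $r_k(T)=r_k(T')$. By Step 1, $T'\in\T_k$, so $\T_k$ is open. The same argument applied to $T\notin\T_k$ shows that the complement is open, so $\T_k$ is also closed.

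\emph{Step 3 (measurability).} The sets $\{\partial\ge k\}=\T_k$ are Borel for every $k$. Therefore $\{\partial=k\}=\T_k\setminus\T_{k+1}$ and $\{\partial=\infty\}=\bigcap_k\T_k$ are Borel, which gives Borel measurability of $\partial$ (allowing the value $\infty$ as in the definition).

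The only delicate point is the boundary bookkeeping in Step 1: one must check only depths at most $k-1$, so that truncation leaves do not create spurious leaves. Local finiteness ensures that $r_k(T)$ is a finite object, but it is not even essential for the argument.
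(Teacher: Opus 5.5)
Your proof is correct and follows essentially the paper's route. The openness argument via the cylinder $\{T' : r_k(T')=r_k(T)\}$ is the same as the paper's. For closedness, the paper argues with sequences, using that $T_n\to T$ forces $k_u(T_n)\to k_u(T)$, so a leaf of $T$ at depth below $k$ persists in $T_n$; you obtain closedness more economically by applying the same locality observation to the complement. Your Step~3 also spells out the measurability step the paper only asserts, including $\{\partial=\infty\}=\bigcap_k \T_k$. This handles the value $\infty$ on leafless infinite trees, which the paper's stated codomain $\mathbb{N}\cup\{0\}$ overlooks.
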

\begin{proof}
	Fix an integer $k\ge0$. To prove that $\T_k:=\{T\in\T:\partial(T)\ge k\}$ is open in $(\T,\delta)$, let $T\in\T_k$ and define $U:=\{T'\in\T:r_k(T)=r_k(T')\}$. By definition, $U$ is an open neighborhood of $T$ (see, e.g., \cite[p.~6]{AbrahamDelmasnotes}); indeed, $U=B_\delta(T,2^{-(k-1)})$. Since every tree in $U$ coincides with $T$ at least up to level $k$, it follows that $U\subseteq\T_k$.
	\smallskip
	
	Next we prove that $\T_k$ is a closed set of $(\T,\delta)$. Let $(T_n)_{n \geq 1} \subset \T_k$ be a sequence such that $T_n \rightarrow T$, as $n \rightarrow \infty$. It can be proved, see, for instance, \cite[p.~6]{AbrahamDelmasnotes}, that $T_n$ converges to $T$, as $n \rightarrow \infty$, if and only if $k_u(T_n) \rightarrow k_u(T)$ for every node $u$, as $n \rightarrow \infty$.
	\smallskip
	
	Let $u$ be any node of $T$ below generation $k$ and assume that $k_u(T) = 0$. The convergence of the sequence $T_n$ implies that there exists an integer $N>0$ such that $|k_u(T_n)|<1$ for every $n \geq N$. This in turn implies that $k_u(T_n) = 0$ for any $n \geq N$, so that $T_n \notin \T_k$ for any $n \geq N$, which is a contradiction. Therefore $T \in \T_k$ and $\T_k$ is closed.
\end{proof}

We endow $\mathbb{R} \cup \{\infty\}$ with the usual one-point compactification topology.
\begin{propo}\label{Propo: discontinuity_set_delta_condensation} The set of discontinuities of $\partial : \T_{\infty} \rightarrow \R \cup \{\infty\}$ is given by 
\[
\mathrm{Disc}(\partial)
=
\Bigl\{\,T \in \T_{\infty}:\ \exists v \ \text{such that } k_v(T)=\infty \ \text{and}\ \partial(T)>|v|+1 \Bigr\}.
\]
\end{propo}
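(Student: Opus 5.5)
The plan is to work directly with the restriction maps $r_{h,\infty}$. Both inclusions reduce to one observation about which nodes survive in $r_{h,\infty}(T)$. Recall that $\delta_\infty(T_n,T)\to0$ means that for every $h$ we eventually have $r_{h,\infty}(T_n)=r_{h,\infty}(T)$. We prove two things: $\partial$ is discontinuous at every $T$ in the displayed set, and $\partial$ is continuous at every $T$ outside it.

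\emph{Discontinuity on the displayed set.} Let $v$ satisfy $k_v(T)=\infty$ and $\partial(T)>|v|+1$.
\begin{itemize}
\item No child of $v$ is a leaf. Otherwise the path from the root through $v$ to that child would give $\partial(T)\le |v|+1$.
\item For each $n$, let $T_n$ be $T$ with all strict descendants of the child $v(n+1)$ removed. Then $v(n+1)$ is a leaf of $T_n$ at depth $|v|+1$, so $\partial(T_n)\le|v|+1<\partial(T)$. This includes the case $\partial(T)=\infty$.
\item Every node that was modified or removed has some Ulam--Harris coordinate equal to $n+1$. Hence $r_{n,\infty}(T_n)=r_{n,\infty}(T)$ and $T_n\to T$.
\end{itemize}
So $\partial(T_n)\not\to\partial(T)$, and $T\in\mathrm{Disc}(\partial)$.

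\emph{Continuity off the set.} Let $T$ be outside the displayed set, put $d=\partial(T)$, and let $T_n\to T$.

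First assume $d<\infty$. We need $\partial(T_n)=d$ for all large $n$, which splits into an upper and a lower bound.
\begin{itemize}
\item \emph{Upper bound.} Choose a leaf $u$ of $T$ at depth $d$. Take $h\ge|u|_\infty+1$. The node $u$ lies in $r_{h,\infty}(T)$, and a first child $u1$ would satisfy $|u1|_\infty\le h$. So agreement of the $h$-restrictions forces $u\in T_n$ with $k_u(T_n)=0$. This gives $\partial(T_n)\le d$ eventually.
\item \emph{Lower bound, finiteness step.} Since $T$ is not in the set, every $v$ with $k_v(T)=\infty$ has $|v|\ge d-1$. Thus all nodes at depth $\le d-2$ have finite degree. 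By induction on depth, the set $F$ of nodes of $T$ at depth $\le d-1$ is finite.
\item \emph{Lower bound, choice of $h$.} Pick $h$ exceeding $d$ and exceeding every coordinate of every node in $F$ plus one. Also require $h$ to exceed $k_w(T)+1$ for every $w\in F$ of depth $\le d-2$.
\item \emph{Lower bound, induction.} If $r_{h,\infty}(T_n)=r_{h,\infty}(T)$, an induction on depth shows that the nodes of $T_n$ at depth $\le d-1$ are exactly $F$. The key point is that for a parent $w$ of depth $\le d-2$, whether the child $w(k_w(T)+1)$ is present is visible in the $h$-restriction, so $k_w(T_n)=k_w(T)$.
\item \emph{Lower bound, conclusion.} Each $w\in F$ is a non-leaf of $T$, so its first child $w1$ lies in $r_{h,\infty}(T)$. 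Hence $w1$ is also in $T_n$, and no node of $T_n$ at depth $<d$ is a leaf. This gives $\partial(T_n)\ge d$.
\end{itemize}

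Now assume $d=\infty$. The condition $\partial(T)>|v|+1$ holds for every $v$, so $T$ being outside the set means $T$ has no vertex of infinite degree. Applying the lower-bound argument above at every finite level $d'$ gives $\partial(T_n)\ge d'$ eventually. Hence $\partial(T_n)\to\infty$, which is convergence in the one-point compactification. The case $d=0$ is trivial.

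The main obstacle is the lower bound. Agreement of $r_{h,\infty}$ only controls nodes with small labels, so a priori $T_n$ could have leaves at small depth hidden at large labels. The hypothesis on $T$ removes this: it makes all degrees above depth $d-1$ finite, so one finite $h$ controls the whole first $d$ generations. Making the induction that $k_w(T_n)=k_w(T)$ at those levels airtight is where the care is needed.
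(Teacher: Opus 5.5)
Your proof is correct, but for the inclusion $\mathrm{Disc}(\partial)\subseteq$ set it takes a genuinely different route from the paper.

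\emph{The easy inclusion.} You and the paper do essentially the same thing, with different approximants. The paper truncates $v$ to exactly $n$ children and makes the $n$-th one a leaf. You keep $k_v=\infty$ and prune the descendants of $v(n+1)$. Your check that $r_{n,\infty}(T_n)=r_{n,\infty}(T)$, via the coordinate $n+1$ carried by every removed node, gives $\delta_\infty(T_n,T)\le 2^{-n}$ directly.

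\emph{The converse.} The paper argues by contradiction. From a discontinuity it uses closedness of $\{\partial\ge k+1\}$ to extract a subsequence with $\partial(T_n)=\ell<\partial(T)$. It then follows the ancestral lines of leaves $u_n$ of $T_n$ at height $\ell$ and extracts the deepest ancestor $v$ that stabilises. Finally it deduces $k_v(T)=\infty$, because the next ancestors run through infinitely many distinct children of $v$.

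You instead prove continuity directly off the set. Your upper bound, persistence of a leaf at depth $d$, is exactly the closedness the paper invokes. Your lower bound rests on one observation: the hypothesis forces finite degrees at depth $\le d-2$. So the first $d$ generations form a finite set, controlled by a single restriction level $h$.

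\emph{What each route buys.} Your route gives an explicit neighbourhood $\{T' : r_{h,\infty}(T')=r_{h,\infty}(T)\}$ on which $\partial$ is constant. It also treats $\partial(T)=\infty$ explicitly. In the paper, the choice $k:=\partial(T)$ and the set $\{\partial\ge k+1\}$ have to be reread there as ``$\partial(T_n)$ stays bounded along a subsequence''. Your route also avoids the extraction step. That step's claim that the stable index lies in $\{0,\dots,\ell-1\}$ tacitly uses that a stabilised leaf $u_n\equiv u$ would be a leaf of $T$ at height $\ell<\partial(T)$.

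The paper's argument is softer, but it shows where the infinite-degree vertex comes from. It is the point at which nearby leaves of the approximants escape to arbitrarily large sibling labels.
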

\begin{proof}
Let $T \in \T_{\infty}$ and assume that there exists a vertex $v$ with
$k_v(T)=\infty$ and $\partial(T)>|v|+1$ (equivalently, there are no leaves at height
$\le |v|+1$ in $T$). We construct a sequence $(T_n)_{n\ge1}$ with $T_n\to T$ but
$\partial(T_n)=|v|+1$ for all $n$, hence $\partial$ is discontinuous at $T$.
\smallskip

Enumerate the children of $v$ in $T$ as $(v1,v2,\dots)$ according to the plane
ordering. For each $n\ge1$, define $T_n$ as follows:
\begin{enumerate}
\item the tree $T_n$ coincides with $T$ everywhere except possibly at the vertex $v$;
\item the vertex $v$ keeps its first $n-1$ children as in $T$, together with
their full descendant subtrees;
\item the $n$-th child of $v$ in $T_n$ is declared to be a leaf (i.e.\ it has out-degree $0$);
\item the remaining children of $v$ (with indices $>n$) are chosen so that the degree of $v$
in $T_n$ is finite (for instance, take exactly $n$ children in total).
\end{enumerate}
\smallskip

By construction, $T_n$ has a leaf at height $|v|+1$, hence $\partial(T_n)\le |v|+1$.
On the other hand, since $T_n$ agrees with $T$ on all vertices up to height $|v|$ and
$T$ has no leaf at height $\le |v|+1$, we also have $\partial(T_n)\ge |v|+1$.
Therefore $\partial(T_n)=|v|+1$ for every $n$.
\smallskip

Finally, $T_n\to T$: for any fixed truncation at level $h$, choose $n$ large enough so
that the modification at $v$ occurs beyond the finitely many children of $v$ visible in
the truncation up to height $h$. Then the truncations of $T_n$ and $T$ up to height $h$
coincide for all $n$ large, which is exactly convergence in the local topology of
$\T_{\infty}$.
\smallskip

Conversely, assume that $\partial$ is discontinuous at $T$. Then there exists a sequence
$(T_n)_{n\ge1}$ such that $T_n\to T$ and $\partial(T_n)\neq \partial(T)$ for infinitely
many $n$. Set $k:=\partial(T)$.
\smallskip

Since the set $\{\partial\ge k+1\}$ is closed in the local topology, it is impossible
that $\partial(T_n)\ge k+1$ along a subsequence converging to $T$, as this would imply
$T\in\{\partial\ge k+1\}$, contradicting $\partial(T)=k$. Hence there is a subsequence,
still denoted $(T_n)$, and an integer $\ell$ with $\partial(T_n)=\ell<k=\partial(T)$ for
all $n$.
\smallskip

For each $n$, let $u_n$ be a leaf of $T_n$ at height $\ell$, and for $0\le j\le \ell$
denote by $a_n(j)$ the ancestor of $u_n$ at height $j$ (so, in Neveu's formalism,
$a_n(0)=\varnothing$ and $a_n(\ell)=u_n$). By successive extractions, we may assume that
there exists a maximal index $j\in\{0,\dots,\ell-1\}$ such that the sequence
$(a_n(j))_{n\ge1}$ is constant; denote this common value by $v$. In particular, for every
$n$, the vertex $a_n(j+1)$ is the child of $v$ lying on the path from $v$ to $u_n$.
\smallskip 

By maximality of $j$, the sequence $(a_n(j+1))_{n\ge1}$ cannot take only finitely many
values, and hence, along the subsequence, the vertices $a_n(j+1)$ are pairwise distinct
children of $v$ in $T_n$, for infinitely many $n$.
\smallskip 

We claim that $k_v(T)=\infty$. Recall that $T_n\to T$ in $(\T_{\infty},\delta_{\infty})$
implies $k_u(T_n)\to k_u(T)$ for every node $u$: for $u=v$ and any $h$ exceeding $k_v(T)$
and the coordinates of $v$, one has $r_{h,\infty}(T_n)=r_{h,\infty}(T)$ for $n$ large, and
since the children of a node carry consecutive labels this forces $k_v(T_n)=k_v(T)$.
Suppose now, for contradiction, that $k_v(T)=M<\infty$. Then $k_v(T_n)=M$ for all $n$ large
enough, so the children of $v$ in $T_n$ are exactly $v1,\dots,vM$ and hence
$a_n(j+1)\in\{v1,\dots,vM\}$ for such $n$. Thus $(a_n(j+1))_{n}$ takes only finitely many
values along the subsequence, contradicting that it takes infinitely many distinct values.
Hence $k_v(T)=\infty$.
\smallskip 

Finally, since $\ell<\partial(T)$, the tree $T$ has no leaf at height $\le \ell$, and in
particular $\partial(T)>\ell\ge j+1=|v|+1$, which completes the proof.
\end{proof}

\begin{lem}\label{lem: partial_continuous}
Let $\P$ denote the law of the local limit $\hat{\cT}$ of $\cT_n$ on the Polish space
$(\T_{\infty},\delta_{\infty})$, for a weight sequence of type~I, II or~III. Then
\[
\P\bigl(\{T\in\T_{\infty} : \partial \text{ is discontinuous at } T \}\bigr)=0,
\]
that is, $T\mapsto\partial(T)$ is $\P$-almost surely continuous.
\end{lem}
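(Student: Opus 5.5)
By Proposition~\ref{Propo: discontinuity_set_delta_condensation}, it suffices to show that
\[
\P\bigl(\exists v\in\hat{\cT}:\ k_v(\hat{\cT})=\infty,\ \partial(\hat{\cT})>|v|+1\bigr)=0 .
\]
I will check this separately for each type, using the explicit description of $\hat{\cT}$ from Section~\ref{sec:local_limits}.

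\emph{Type~I.} Here $\hat{\xi}<\infty$ almost surely, and normal vertices have finite offspring $\xi$. A countable union of null events is null, so almost surely every vertex of $\hat{\cT}$ has finite degree. The set above is therefore empty almost surely.

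\emph{Type~III.} The tree $\hat{\cT}$ is the infinite star. Its unique vertex of infinite degree is the root $v=\varnothing$, with $|v|=0$. Since $\partial(\hat{\cT})=1=|v|+1$, the strict inequality fails deterministically.

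\emph{Type~II.} This is the main case. Almost surely the only vertex of infinite degree is the top $s$ of the spine, at height $|s|=G-1$, where $G$ is the geometric length of the spine. Normal vertices have finite degree almost surely by the same countable-union argument, and every special vertex below the top has finite degree $\hat{\xi}<\infty$. We must rule out that $\hat{\cT}$ has no leaf at height $\le |s|+1$. All children of $s$ are normal and i.i.d. Each of them is a leaf with probability $p_0>0$; here $p_0>0$ holds because $\omega_0>0$. Infinitely many independent trials with success probability $p_0$ produce a leaf almost surely (Borel--Cantelli, or simply $(1-p_0)^N\to0$). Hence $s$ has a leaf child almost surely, which gives $\partial(\hat{\cT})\le |s|+1$.

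The only subtle point is justifying that the children of the infinite-degree vertex are i.i.d. with offspring law $\xi$ and that $p_0>0$. Both follow directly from the construction of $\hat{\cT}$ and from $\P(\xi=0)=\omega_0R^0/\Phi(R)>0$.
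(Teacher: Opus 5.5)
Your proposal is correct and follows the paper's proof essentially step for step. You reduce via Proposition~\ref{Propo: discontinuity_set_delta_condensation} to showing that every vertex of infinite degree has a leaf child, and then handle type~I by local finiteness, type~II by the $(1-p_0)^N\to 0$ argument at the top of the spine, and type~III by the infinite star. The only difference is cosmetic: the paper makes the type~II independence precise by conditioning on the tree up to generation $|v|$, which is exactly what your claim that the children of the top spine vertex are i.i.d.\ uses implicitly.
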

\begin{proof}
By Proposition~\ref{Propo: discontinuity_set_delta_condensation}, every tree in
$\mathrm{Disc}(\partial)$ has a vertex $v$ of infinite outdegree with $\partial(T)>|v|+1$; in
particular $v$ has no leaf among its children. We show that this event is not charged by $\P$,
distinguishing the three regimes.
\smallskip

\emph{Type~I.} Here $m=\E(\xi)=1$, so the special vertices reproduce according to the size-biased
law $\hat{\xi}$, which has no atom at infinity, $\P(\hat{\xi}=\infty)=1-m=0$, while normal
vertices reproduce according to $\xi<\infty$. Hence every vertex of $\hat{\cT}$ has finite
outdegree, i.e.\ $\hat{\cT}$ is almost surely locally finite, and therefore
$\P\bigl(\exists v: k_v(\hat{\cT})=\infty\bigr)=0$. In particular $\P(\mathrm{Disc}(\partial))=0$.
\smallskip

\emph{Type~II.} Let $v$ be the almost surely unique vertex of infinite outdegree. Conditionally
on the $\sigma$-field $\mathcal F_{|v|}$ generated by the tree up to generation $|v|$, the
subtrees rooted at the children of $v$ are i.i.d.\ Bienaym\'e--Galton--Watson trees with
offspring distribution $\xi$. Writing $A_n$ for the event that the first $n$ children of $v$ are
not leaves and $A$ for the event that no child of $v$ is a leaf,
\[
\P(A_n\mid\mathcal F_{|v|})=(1-p_0)^n,\qquad p_0=\P(\xi=0)>0,
\]
since $\xi$ is subcritical. Taking expectations, $\P(A)\le\P(A_n)=(1-p_0)^n\to0$, so $\P(A)=0$
and $\P(\mathrm{Disc}(\partial))=0$.
\smallskip

\emph{Type~III.} The limit degenerates: the condensation vertex is the root, all of whose
children are leaves, so $\partial(\hat{\cT})=1$ almost surely and the root has a leaf among its
children with probability one. Hence $\mathrm{Disc}(\partial)$ is not charged.
\end{proof}

\begin{theo}\label{theo: conv_dist_partial}
Let $\cT_n$ be a size conditioned simply generated tree of type~I, II or~III, and let
$\hat{\cT}$ be the corresponding local limit around the root. Then
\[
\partial(\cT_n) \xrightarrow{d} \partial(\hat{\cT})\,,\quad \text{as $n\to\infty$,}
\]
and, for every $k\ge1$,
\[
\lim_{\substack{n \to \infty}} \P\Bigl(\partial(\cT) \ge k \,\big|\, |\cT|=n\Bigr)
=\lim_{\substack{n \to \infty}}\P\bigl(\partial(\cT_n) \ge k\bigr)
=\P\bigl(\partial(\hat{\cT}) \ge k\bigr).
\]
\end{theo}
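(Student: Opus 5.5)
The plan is to apply the continuous mapping theorem on the compact Polish space $(\T_{\infty},\delta_{\infty})$. By Janson~\cite[Thm.~7.1]{Janson2012}, $\cT_n\xrightarrow{d}\hat{\cT}$ in $(\T_{\infty},\delta_{\infty})$ along the admissible $n$. The map $\partial:\T_{\infty}\to\R\cup\{\infty\}$ is Borel measurable. One way to see this is to note that $\{\partial\ge k\}$ is determined by finitely many coordinates $k_u$ with $|u|<k$. Proposition~\ref{Propo: discontinuity_set_delta_condensation} identifies the discontinuity set, and Lemma~\ref{lem: partial_continuous} shows that this set has $\P$-measure zero under the law of $\hat{\cT}$. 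The mapping theorem then gives $\partial(\cT_n)\xrightarrow{d}\partial(\hat{\cT})$ in $\R\cup\{\infty\}$.

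The limit is almost surely finite in every type, so no mass escapes to $\infty$:
\begin{itemize}
\item in type~III, $\partial(\hat{\cT})=1$;
\item in type~II, the condensation vertex almost surely has a leaf child, as in the proof of Lemma~\ref{lem: partial_continuous};
\item in type~I, any normal child of the root that is a leaf, or more generally any finite GW subtree hanging off the spine, yields a leaf at finite distance. Since $p_0>0$ and the spine has infinitely many vertices, each carrying at least one normal child with positive probability (because $p_0+p_1<1$), this happens almost surely.
\end{itemize}
Consequently the convergence holds in the ordinary sense on $\mathbb{N}\cup\{0\}$.

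For the tail statement, I note that $\partial$ takes integer values. Hence $\{\partial\ge k\}=\{\partial>k-\tfrac12\}$, and its boundary in $\R\cup\{\infty\}$ carries no mass under the law of $\partial(\hat{\cT})$. Portmanteau then gives $\P(\partial(\cT_n)\ge k)\to\P(\partial(\hat{\cT})\ge k)$. Alternatively, $\{T:\partial(T)\ge k\}$ is a $\P$-continuity set, because its topological boundary is contained in $\mathrm{Disc}(\partial)$. The first equality in the display is just the identification of laws. In types~I and~II, $\cT_n$ has the law of $\cT$ conditioned on $|\cT|=n$, as recalled in Section~\ref{sec:sgt}, since $\xi=X_\tau$ or $X_R$ is a member of the Khinchin family. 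In type~III, I interpret the conditional probability as $\P(\partial(\cT_n)\ge k)$, since $\xi$ is degenerate there.

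The only delicate point, which I expect to be the main obstacle, is measurability and the continuity-set argument on $\T_{\infty}$, where vertices may have infinite degree. There, $\{\partial\ge k\}$ is not clopen in general: by Proposition~\ref{Propo: discontinuity_set_delta_condensation} it fails to be open exactly at trees containing an infinite-degree vertex with no nearby leaf. Everything reduces to the statement that the boundary of $\{\partial\ge k\}$ is contained in $\mathrm{Disc}(\partial)$, which is immediate for an integer-valued map, together with Lemma~\ref{lem: partial_continuous}.
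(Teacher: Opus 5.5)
Your proposal is correct and takes essentially the paper's route: Janson's local convergence in $(\T_{\infty},\delta_{\infty})$, the discontinuity set of Proposition~\ref{Propo: discontinuity_set_delta_condensation} being null by Lemma~\ref{lem: partial_continuous}, the mapping theorem, and $\{\partial\ge k\}$ as a continuity set. Your extra checks (almost sure finiteness of $\partial(\hat{\cT})$ in each type, and identifying $\cT_n$ with the size-conditioned BGW tree for the first equality) are sound details the paper leaves implicit. One small slip: in $\T_{\infty}$ the set $\{\partial\ge k\}=\bigcap_{|u|<k}\{k_u\neq 0\}$ involves countably many coordinates, not finitely many. This still gives measurability, and alternatively you can use closedness of $\{\partial\ge k\}$ as the paper does.
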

\begin{proof}
By Janson~\cite[Thm.~7.1, p.~121]{Janson2012}, $\cT_n\xrightarrow{d}\hat{\cT}$ in the local
topology of $(\T_{\infty},\delta_{\infty})$. For every $k\ge0$ the set
$\{T\in\T_{\infty}:\partial(T)\ge k\}$ is closed, so $\partial$ is Borel measurable, and by
Lemma~\ref{lem: partial_continuous} the functional $\partial$ is continuous almost surely under
the law of $\hat{\cT}$. The mapping theorem~\cite[Thm.~2.7]{MR1700749} then yields
$\partial(\cT_n)\xrightarrow{d}\partial(\hat{\cT})$, and the displayed identity for the tails
follows since $\{\partial\ge k\}$ is a continuity set. In the critical case (type~I) one may
argue more directly: by Lemma~\ref{lem: clopen} the set $\{\partial\ge k\}$ is clopen in
$(\T,\delta)$, hence a continuity set, and the portmanteau theorem applies.
\end{proof}

\begin{remark}\label{rem:type_III}
In type~III the limiting object degenerates: the condensation tree $\hat{\cT}$ has a root of
infinite outdegree all of whose children are leaves, so $\partial(\hat{\cT})=1$ almost surely.
By the local limit in this regime (\cite[\S4]{Janson2012}, \cite[\S5]{zbMATH07039768}) we
have $\partial(\cT_n)\xrightarrow{d}1$.
\finremark
\end{remark}

\subsection{Exponential tail bound and uniform integrability}\label{sec: distance_borde_UI}
In this subsection we prove that the sequence of random variables \(\partial(\cT_n)\) is uniformly integrable of every order. Consequently, the moments of \(\partial(\cT_n)\) converge to those of \(\partial(\hat{\cT})\).
\medskip

The main result of this subsection is Proposition~\ref{lem: sub_exp_critical}: a uniform exponential tail bound for the distance from the root to the nearest leaf, valid without any assumption on the weight types. We prove it through six auxiliary lemmas.

\medskip
The strategy is to bound the slightly larger quantity $\partial^{*}(\cT_n)\ge\partial(\cT_n)$, defined as the depth of the leftmost leaf in depth-first order. The advantage of $\partial^{*}$ is that the event $\{\partial^{*}\ge k\}$ has a clean combinatorial description in terms of the DFS outdegree sequence, which lets us recast the problem as a question about a uniformly random ordering of a fixed multiset. 
\medskip

The lemmas progress as follows. Lemma~\ref{lem: dfs_uniform_Luk} identifies the conditional law of the DFS sequence given the multiset of outdegrees as the uniform law on the corresponding \L{}ukasiewicz orderings. Lemma~\ref{lem: exch_repr} shows that this constrained uniform law can be recovered from an unconstrained uniform law on \emph{all} orderings, via a cyclic shift conditioned on a \L{}ukasiewicz event $\mathcal E_n$. Lemma~\ref{lem: prefix_sampling} computes the law of the first $k$ coordinates of the unconstrained ordering as a uniform sample without replacement. Lemma~\ref{lem: cond_cycle} computes the conditional probability of $\mathcal E_n$ given the first $k$ values via a refinement of the cycle lemma. Lemma~\ref{lem: master_identity} combines the previous lemmas into an exact formula for $\P(\partial^{*}(\cT_n)\ge k\mid\mathcal G_n)$. Lemma~\ref{lem: cond_bound} extracts an explicit upper bound from this formula in terms of the proportion of leaves $\eta=L/n$. The proof of Proposition~\ref{lem: sub_exp_critical} then takes expectations and splits according to whether $\eta$ is close to its limiting value $p_0=\P(\xi=0)$, the asymptotic proportion of leaves, or not, using Janson's universal leaf-concentration estimate to handle the atypical regime.

\medskip
We now turn to the setup. Let $\cT_n$ be the simply generated tree of size $n$, and write $\P$ and $\E$ for its law and the corresponding expectation; in types~I and~II, $\cT_n$ is the Bienaym\'e--Galton--Watson tree with offspring distribution $\xi$ conditioned on having $n$ vertices. Denote by $(d^{\mathrm{dfs}}_1,\dots,d^{\mathrm{dfs}}_n)$ the outdegree sequence of $\cT_n$ in depth-first order, and let
\[
L:=n_{\cT_n}(0)
\]
be the number of leaves; for $n\ge 2$, $1\le L\le n-1$. Write $M(\cT_n):=(n_{\cT_n}(j))_{j\ge 0}$ for the (random) multiset of outdegrees of $\cT_n$, and let $\mathcal G_n:=\sigma(M(\cT_n))$ be the $\sigma$-algebra it generates. The leftmost-leaf depth $\partial^{*}(\cT_n)$ admits the following description in terms of the DFS sequence: $\partial(\cT_n)\le\partial^{*}(\cT_n)$ pointwise, and
\[
\{\partial^{*}(\cT_n)\ge k\}=\{d^{\mathrm{dfs}}_1>0,\dots,d^{\mathrm{dfs}}_k>0\}.
\]

The previous equality is illustrated by the following figure.\begin{figure}[H]
\centering
\resizebox{0.48\linewidth}{!}{%
\begin{tikzpicture}[line cap=round,line join=round,
  int/.style ={circle,draw=cInt,fill=cInt,minimum size=5pt,inner sep=0},
  leaf/.style={circle,draw=cInt,fill=white,line width=.8pt,minimum size=5pt,inner sep=0},
  ed/.style  ={gray!55,line width=0.8pt},
  near/.style={pRoot,line width=1.7pt},
  idx/.style ={font=\scriptsize,gray!45!black},
  cell/.style={draw=gray!60,minimum size=16pt,inner sep=0,font=\small},
  cellhi/.style={draw=pRoot,line width=1.1pt,minimum size=16pt,inner sep=0,font=\small}]
\coordinate (v1) at (0,0);
\coordinate (v2) at (-0.85,-1.05); \coordinate (v6) at (0.85,-1.05);
\coordinate (v3) at (-1.35,-2.1);  \coordinate (v5) at (-0.35,-2.1); \coordinate (v7) at (0.85,-2.1);
\coordinate (v4) at (-1.35,-3.15);
\draw[ed] (v1)--(v6); \draw[ed] (v2)--(v5); \draw[ed] (v6)--(v7);
\draw[ed] (v1)--(v2)--(v3)--(v4);
\draw[near] (v1)--(v2)--(v3)--(v4);
\node[int] at (v1){}; \node[int] at (v2){}; \node[int] at (v3){}; \node[int] at (v6){};
\node[leaf] at (v4){}; \node[leaf] at (v5){}; \node[leaf] at (v7){};
\node[idx] at (0.22,0.2) {$1$};   \node[idx] at (-1.08,-0.9) {$2$};
\node[idx] at (-1.58,-1.95){$3$}; \node[idx] at (-1.62,-3.05){$4$};
\node[idx] at (-0.12,-1.95){$5$}; \node[idx] at (1.08,-0.9){$6$}; \node[idx] at (1.08,-1.95){$7$};
\node[pRoot,font=\small,anchor=west] at (-0.72,-1.62) {\, $\partial^{*}=3$};
\node[font=\scriptsize,anchor=east] at (-1.78,-3.15) {leftmost leaf};
\node[font=\footnotesize] at (0,-3.95) {$(d^{\mathrm{dfs}}_1,\dots,d^{\mathrm{dfs}}_n)$ in depth-first order};
\node[cellhi] (c1) at (-1.98,-4.45) {$2$};
\node[cellhi] (c2) at (-1.32,-4.45) {$2$};
\node[cellhi] (c3) at (-0.66,-4.45) {$1$};
\node[cell]   (c4) at (0,-4.45) {$0$};
\node[cell]   (c5) at (0.66,-4.45) {$0$};
\node[cell]   (c6) at (1.32,-4.45) {$1$};
\node[cell]   (c7) at (1.98,-4.45) {$0$};
\draw[pRoot,line width=0.9pt] (-2.27,-4.86)--(-0.37,-4.86);
\draw[pRoot,line width=0.9pt] (-2.27,-4.86)--(-2.27,-4.78);
\draw[pRoot,line width=0.9pt] (-0.37,-4.86)--(-0.37,-4.78);
\node[pRoot,font=\footnotesize,anchor=north] at (-1.32,-4.92) {first $k$ all $>0$};
\node[font=\small,anchor=north] at (0,-5.5)
  {$\{\partial^{*}(\cT_n)\ge k\}=\{d^{\mathrm{dfs}}_1>0,\dots,d^{\mathrm{dfs}}_k>0\}$};
\end{tikzpicture}%
}
\caption{The leftmost-leaf bound behind the exponential tail estimate.}
\label{fig:dfs_leftmost}
\end{figure}

\smallskip
For a deterministic multiset $M$ of non-negative integers with $\sum_c M(c)=n$ and $\sum_c c\,M(c)=n-1$, write $\Sigma_M$ for its set of distinguishable orderings, and $\Sigma_M^{\mathrm{Luk}}\subset\Sigma_M$ for the subset of \emph{\L{}ukasiewicz orderings}, that is, sequences $(a_1,\dots,a_n)\in\Sigma_M$ satisfying $\sum_{i=1}^k(a_i-1)\ge 0$ for every $k=1,\dots,n-1$.

\medskip
Our first lemma identifies the conditional law of the DFS sequence given the multiset of outdegrees. It is a direct consequence of two well-known facts: that the simply generated weight of a tree depends only on its multiset of outdegrees, and that the DFS map gives a bijection between plane trees of size $n$ and \L{}ukasiewicz sequences of length $n$.

\begin{lem}\label{lem: dfs_uniform_Luk}
Conditional on $\mathcal G_n$, the DFS outdegree sequence $(d^{\mathrm{dfs}}_1,\dots,d^{\mathrm{dfs}}_n)$ is uniformly distributed on $\Sigma_{M(\cT_n)}^{\mathrm{Luk}}$.
\end{lem}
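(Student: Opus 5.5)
We argue by computing, for a fixed admissible multiset $M$ (meaning $\sum_c M(c)=n$, $\sum_c cM(c)=n-1$ and $\P(M(\cT_n)=M)>0$), the conditional law of the DFS sequence on the event $\{M(\cT_n)=M\}$. Since $\mathcal G_n=\sigma(M(\cT_n))$ is generated by a discrete random variable, it suffices to show that for each such $M$ and each $a\in\Sigma_M^{\mathrm{Luk}}$,
\[
\P\bigl((d^{\mathrm{dfs}}_1,\dots,d^{\mathrm{dfs}}_n)=a \,\big|\, M(\cT_n)=M\bigr)=\frac{1}{|\Sigma_M^{\mathrm{Luk}}|}.
\]

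\emph{Step 1 (bijection).} The map $T\mapsto(d^+_T(v_1),\dots,d^+_T(v_n))$, with $v_1,\dots,v_n$ the vertices of $T$ in depth-first order, is a bijection from rooted plane trees with $n$ vertices onto sequences $(a_1,\dots,a_n)$ of non-negative integers with $\sum_{i=1}^n(a_i-1)=-1$ and $\sum_{i=1}^k(a_i-1)\ge0$ for $k\le n-1$. This is the standard \L{}ukasiewicz coding; we cite \cite{Janson2012} or \cite{AbrahamDelmasnotes}. Under this bijection, the multiset of outdegrees of $T$ is exactly the multiset of entries of its sequence. Hence the trees $T$ with $M(T)=M$ correspond bijectively to $\Sigma_M^{\mathrm{Luk}}$.

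\emph{Step 2 (weights).} For every $T$ with $M(T)=M$,
\[
w(T)=\prod_{v}\omega_{d^+_T(v)}=\prod_{c\ge0}\omega_c^{M(c)},
\]
which depends only on $M$. By \eqref{eq:sgt_prob}, $\P(\cT_n=T)$ is therefore the same for all trees with outdegree multiset $M$. This common value is positive, because $M$ is admissible.

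\emph{Step 3 (conclusion).} Conditioning on $\{M(\cT_n)=M\}$ thus gives the uniform law on the finite set $\{T: |T|=n,\ M(T)=M\}$. Pushing this law forward through the bijection of Step 1 gives the uniform law on $\Sigma_M^{\mathrm{Luk}}$. Since $M$ was an arbitrary admissible multiset, the claim holds conditionally on $\mathcal G_n$.

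There is no serious obstacle. The only point needing care is Step 1: one must check that the image of the bijection restricted to $\{M(T)=M\}$ is all of $\Sigma_M^{\mathrm{Luk}}$. This holds because every ordering of $M$ automatically has total sum $n-1$, so the final condition $\sum_{i=1}^n(a_i-1)=-1$ is satisfied for any ordering. The partial-sum conditions in the definition of $\Sigma_M^{\mathrm{Luk}}$ are then exactly the remaining conditions characterising \L{}ukasiewicz sequences.
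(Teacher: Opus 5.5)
Your proposal is correct and follows the paper's proof. The weight $\prod_c \omega_c^{M(c)}$ depends only on the outdegree multiset, so conditioning on $\mathcal G_n$ gives the uniform law on trees with multiset $M$, and the \L{}ukasiewicz (DFS) bijection carries this to the uniform law on $\Sigma_M^{\mathrm{Luk}}$; your explicit check that the restricted bijection is onto $\Sigma_M^{\mathrm{Luk}}$ is a detail the paper leaves implicit.
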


\begin{proof}
Since the simply generated weight of a plane tree $t$ with $n$ vertices is $\prod_{v\in t}\omega_{\deg^+(v)}$, it depends on $t$ only through its multiset of outdegrees. Conditioning on $\mathcal G_n$ therefore makes $\cT_n$ uniform on the (finite) set of plane trees with multiset $M(\cT_n)$. The DFS map is a bijection between plane trees of size $n$ and \L{}ukasiewicz sequences of length $n$, and it sends a tree with multiset $M$ to a sequence in $\Sigma_M^{\mathrm{Luk}}$. The uniform law on plane trees with multiset $M(\cT_n)$ is therefore mapped bijectively to the uniform law on $\Sigma_{M(\cT_n)}^{\mathrm{Luk}}$.
\end{proof}

The constraint that the DFS sequence be \L{}ukasiewicz is a global one, it involves all partial sums, and is not easy to manage for direct probabilistic computations. The next lemma circumvents this by lifting the law to the unconstrained set $\Sigma_M$ of all orderings. Given a fixed multiset $M$, define on a possibly enlarged probability space, conditional on $\{M(\cT_n)=M\}$,
\[
(\tilde d_1,\dots,\tilde d_n)\text{ uniform on }\Sigma_M,\qquad U\text{ uniform on }\{1,\dots,n\}\text{ independent of }\tilde d,
\]
and the cyclic shift
\[
\tilde d^{(U)}_i:=\tilde d_{((U+i-1-1)\bmod n)+1},\qquad i=1,\dots,n,
\]
together with the \L{}ukasiewicz event
\[
\mathcal E_n:=\Bigl\{\textstyle\sum_{i=1}^m(\tilde d^{(U)}_i-1)\ge 0\,,\ \forall\,m=1,\dots,n-1\Bigr\}.
\]
Throughout the rest of the section, we also write
\[
A_k:=\{\tilde d^{(U)}_1>0,\dots,\tilde d^{(U)}_k>0\},\qquad
H:=\sum_{i=1}^k(\tilde d^{(U)}_i-1).
\]
On $A_k$, each summand of $H$ is nonnegative, so $H\ge 0$.

\begin{lem}\label{lem: exch_repr}
Conditional on $\mathcal G_n$:
\begin{enumerate}[label=\textup{(\roman*)}]
\item $\tilde d^{(U)}$ is uniform on $\Sigma_{M(\cT_n)}$;
\item $\P(\mathcal E_n\mid\mathcal G_n)=1/n$;
\item $(d^{\mathrm{dfs}}_1,\dots,d^{\mathrm{dfs}}_n)\stackrel{d}{=} \left((\tilde d^{(U)}_1,\dots,\tilde d^{(U)}_n)\,\bigm|\,\mathcal E_n\right)$.
\end{enumerate}
\end{lem}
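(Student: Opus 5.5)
Throughout we work conditionally on $\{M(\cT_n)=M\}$ for a fixed admissible multiset $M$ (with $\sum_c M(c)=n$, $\sum_c cM(c)=n-1$), since $\mathcal G_n$ is generated by the countable partition into such events. The proof then reduces to three finite combinatorial facts about $\Sigma_M$ and its cyclic shifts, the key input being the cycle lemma of Dvoretzky--Motzkin.

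For (i), note that for each fixed $u\in\{1,\dots,n\}$ the cyclic shift by $u$ is a bijection of $\Sigma_M$ onto itself: it permutes positions and preserves the multiset. Hence, conditionally on $U=u$, the vector $\tilde d^{(u)}$ is uniform on $\Sigma_M$. Since $U$ is independent of $\tilde d$, averaging over $u$ gives that $\tilde d^{(U)}$ is uniform on $\Sigma_M$.

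For (ii) and (iii), I would invoke the cycle lemma. Write a sequence as $x=(x_1,\dots,x_n)$ with increments $x_i-1$ summing to $-1$. Among the $n$ cyclic shifts of $x$, exactly one has all partial sums $\sum_{i\le m}(x_i-1)\ge0$ for $m\le n-1$; equivalently, the walk first hits $-1$ at time $n$. The standard proof takes the shift starting just after the first position where the walk attains its overall minimum. A caveat is that for periodic $x$ several shift indices may give the same distinct sequence. The clean statement, however, concerns the pair $(x,u)$: for each $x$ exactly one $u\in\{1,\dots,n\}$ makes $x^{(u)}$ \L{}ukasiewicz. Indeed, the total increment $-1$ forces aperiodicity, since a period $p<n$ would make the total sum a multiple $n/p>1$ of a period sum, which is impossible for total $-1$.

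Consequently, the map $(x,u)\mapsto x^{(u)}$ from $\Sigma_M\times\{1,\dots,n\}$ to $\Sigma_M$ is exactly $n$-to-one. Each $y\in\Sigma_M$ has precisely $n$ preimages $(y^{(-u)},u)$, distinct by aperiodicity. Restricting to pairs with $x^{(u)}\in\Sigma_M^{\mathrm{Luk}}$, this gives a bijection between $\{(x,u): x^{(u)}\text{ \L{}ukasiewicz}\}$ and $\Sigma_M\times\{\text{one }u\}$. Counting then gives $|\{(x,u):\mathcal E_n\}|=|\Sigma_M|$, out of $n|\Sigma_M|$ equally likely pairs, which yields (ii): $\P(\mathcal E_n\mid M)=1/n$.

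Moreover, every $y\in\Sigma_M^{\mathrm{Luk}}$ arises from exactly $n$ pairs $(y^{(-u)},u)$, and all of these lie in $\mathcal E_n$, since $\mathcal E_n$ depends only on the shifted sequence. So conditionally on $\mathcal E_n$, each element of $\Sigma_M^{\mathrm{Luk}}$ has probability $n/(n|\Sigma_M|\cdot\frac1n)=1/|\Sigma_M^{\mathrm{Luk}}|$. (Incidentally, $|\Sigma_M^{\mathrm{Luk}}|=|\Sigma_M|/n$.) This shows that $(\tilde d^{(U)}\mid\mathcal E_n)$ is uniform on $\Sigma_M^{\mathrm{Luk}}$, and comparing with Lemma~\ref{lem: dfs_uniform_Luk} gives (iii).

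The main obstacle is the careful use of the cycle lemma in the presence of repeated values, that is, ensuring that each $y$ has exactly $n$ distinct preimages. The argument above handles this via aperiodicity, which follows from the total increment being $-1$.
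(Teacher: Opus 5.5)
Your proof is correct and follows essentially the same route as the paper: you fix the multiset $M$, count over $\Sigma_M\times\{1,\dots,n\}$, use the cycle lemma with $r=1$, and obtain (iii) by Bayes' rule compared against Lemma~\ref{lem: dfs_uniform_Luk}. The differences are cosmetic: you get (ii) by applying the cycle lemma to each sequence separately rather than counting one \L{}ukasiewicz element per orbit, and in your pair formulation the aperiodicity step is not actually needed, since the $n$ preimages $(y^{(-u)},u)$ are automatically distinct in their second coordinate.
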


\begin{proof}
Fix a realisation $M$ of $M(\cT_n)$.

\smallskip
For (i), let $\tau\in\Sigma_M$. The equation $\rho^{j-1}\sigma=\tau$, where $\rho$ denotes cyclic left-shift, has the unique solution $\sigma=\rho^{-(j-1)}\tau$ for each $j\in\{1,\dots,n\}$. Hence $\{\tilde d^{(U)}=\tau\}$ has exactly $n$ preimages in $\Sigma_M\times\{1,\dots,n\}$, each of probability $1/(n|\Sigma_M|)$, so $\P(\tilde d^{(U)}=\tau\mid M(\cT_n)=M)=1/|\Sigma_M|$.

\smallskip
For (ii), we first check that the cyclic group $\Z/n\Z$ acts freely on $\Sigma_M$: if some $\sigma\in\Sigma_M$ had nontrivial period (i.e., there exists \(p<n\) such that the vector is
obtained by repeating a block of length \(p\)) therefore $p\mid n$ with $p<n$, then $n-1=\sum_i\sigma_i=(n/p)\sum_{i\le p}\sigma_i$ would force $n\mid p(n-1)$ and hence $n\mid p$ (since $\gcd(n,n-1)=1$), a contradiction. Every orbit therefore has size exactly $n$, and by the cycle lemma applied with $r=1$, exactly one element of each orbit is \L{}ukasiewicz. This gives $|\Sigma_M^{\mathrm{Luk}}|=|\Sigma_M|/n$, and combined with (i),
\[
\P(\mathcal E_n\mid M(\cT_n)=M)=\frac{|\Sigma_M^{\mathrm{Luk}}|}{|\Sigma_M|}=\frac{1}{n}.
\]

\smallskip
For (iii), let $\sigma\in\Sigma_M^{\mathrm{Luk}}$. By (i) and (ii),
\[
\P(\tilde d^{(U)}=\sigma\mid\mathcal E_n,M(\cT_n)=M)
=\frac{\P(\tilde d^{(U)}=\sigma\mid M(\cT_n)=M)}{\P(\mathcal E_n\mid M(\cT_n)=M)}
=\frac{1/|\Sigma_M|}{1/n}=\frac{1}{|\Sigma_M^{\mathrm{Luk}}|}.
\]
The right-hand side does not depend on $\sigma\in\Sigma_M^{\mathrm{Luk}}$, so $\tilde d^{(U)}\mid\mathcal E_n,M(\cT_n)=M$ is uniform on $\Sigma_M^{\mathrm{Luk}}$. By Lemma~\ref{lem: dfs_uniform_Luk}, this matches the conditional law of the DFS sequence.
\end{proof}

\medskip
Once the DFS sequence is recast as $\tilde d^{(U)}\mid\mathcal E_n$ via Lemma~\ref{lem: exch_repr}, the prefix law before imposing $\mathcal E_n$ is just sampling without replacement, which factors over coordinates and allows direct expectation calculations.

\begin{lem}\label{lem: prefix_sampling}
Conditional on $\mathcal G_n$, the prefix $(\tilde d^{(U)}_1,\dots,\tilde d^{(U)}_k)$ is a uniform ordered sample without replacement of size $k$ from the multiset $M(\cT_n)$. In particular,
\begin{equation}\label{eq: Ak_prob}
\P(A_k\mid\mathcal G_n)=\frac{(n-L)_k}{(n)_k},\qquad k=1,\dots,n,
\end{equation}
where $(m)_j=m(m-1)\cdots(m-j+1)$.
\end{lem}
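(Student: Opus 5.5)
By Lemma~\ref{lem: exch_repr}(i), conditional on $\mathcal G_n$ (equivalently on $\{M(\cT_n)=M\}$ for a fixed realisation $M$), the shifted vector $\tilde d^{(U)}$ is uniform on $\Sigma_M$. So it suffices to show the following: if $\sigma$ is a uniformly chosen distinguishable ordering of a fixed multiset $M$ of size $n$, then its first $k$ entries form a uniform ordered sample without replacement from $M$.

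The cleanest route is to label the multiset. Give the $n$ elements of $M$ distinct labels $1,\dots,n$. Let $\pi$ be a uniform permutation of $\{1,\dots,n\}$, and let $\sigma$ be the sequence of values read off in the order $\pi(1),\dots,\pi(n)$.

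Each distinguishable ordering $\tau\in\Sigma_M$ arises from exactly $\prod_c M(c)!$ permutations $\pi$. This count does not depend on $\tau$, so $\sigma$ is uniform on $\Sigma_M$. It is therefore distributed as $\tilde d^{(U)}$ given $M(\cT_n)=M$.

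The prefix $(\pi(1),\dots,\pi(k))$ of a uniform permutation is a uniform ordered $k$-tuple of distinct labels, which is exactly sampling without replacement from the labelled population. Forgetting labels, $(\sigma_1,\dots,\sigma_k)$ is then a uniform ordered sample without replacement from the multiset $M$, as claimed.

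For \eqref{eq: Ak_prob}, the event $A_k$ means that the first $k$ sampled labels all come from the $n-L$ elements with positive value. The number of ordered $k$-tuples of distinct labels drawn from those elements is $(n-L)_k$, out of $(n)_k$ tuples in total. Hence
\[
\P(A_k\mid M(\cT_n)=M)=\frac{(n-L)_k}{(n)_k}.
\]
Equivalently, this is the product $\prod_{i=0}^{k-1}\frac{n-L-i}{n-i}$ of sequential conditional probabilities. The formula remains valid when $k>n-L$, since both sides are then zero because of the falling factorial. Since the right-hand side depends on $M$ only through $L$, the identity holds conditionally on $\mathcal G_n$.

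There is no real obstacle. The one point that needs care is the multiplicity count: every distinguishable ordering must correspond to the same number $\prod_c M(c)!$ of labelled permutations. This is what allows passing between the uniform law on $\Sigma_M$ and the uniform law on permutations, and once it is in place everything else is elementary.
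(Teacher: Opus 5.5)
Your proof is correct and is essentially the paper's argument: both reduce via Lemma~\ref{lem: exch_repr}(i) to a uniform ordering of the fixed multiset $M$, then identify the prefix law by elementary counting, which gives $(n-L)_k/(n)_k$ for $A_k$. The only difference is cosmetic: the paper counts orderings with a prescribed prefix directly and obtains $\frac{(n-k)!\prod_c M(c)!}{n!\prod_c M_{\rm rest}(c)!}$, whereas you lift to a labelled uniform permutation (the same device the paper uses in Lemma~\ref{lem: cond_cycle}), which makes the identification with sampling without replacement immediate.
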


\begin{proof}
By Lemma~\ref{lem: exch_repr}(i), conditional on $\{M(\cT_n)=M\}$, $\tilde d^{(U)}$ is uniform on $\Sigma_M$. For any sequence $(b_1,\dots,b_k)\in\Z_{\ge 0}^k$ that can be extracted from $M$,
\begin{align*}
\P\bigl((\tilde d^{(U)}_1,\dots,\tilde d^{(U)}_k)=(b_1,\dots,b_k)\bigm|M(\cT_n)=M\bigr)
&=\frac{\#\{\sigma\in\Sigma_M:\sigma_i=b_i,\,i\le k\}}{|\Sigma_M|} \\
&=\frac{(n-k)!\prod_c M(c)!}{n!\prod_c M_{\rm rest}(c)!},
\end{align*}
where $M_{\rm rest}(c):=M(c)-\#\{i\le k:b_i=c\}$. This is exactly the law of an ordered draw of size $k$ without replacement from the multiset $M$, proving the first claim. Equation~\eqref{eq: Ak_prob} is the probability that the $k$ draws all come from the $n-L$ positive outdegrees among $n$ total balls.
\end{proof}

\medskip
We now strengthen Lemma~\ref{lem: exch_repr} (ii) by computing the conditional probability of $\mathcal E_n$ once the first $k$ values of $\tilde d^{(U)}$ are known. The proof reduces this to the cycle lemma applied to the residual sequence after the first $k$ steps, this time with $r=h+1$ rather than $r=1$.

\begin{lem}\label{lem: cond_cycle}
For every $1\le k\le n-1$, on $A_k$,
\[
\P\bigl(\mathcal E_n\,\bigm|\,\mathcal G_n,\tilde d^{(U)}_1,\dots,\tilde d^{(U)}_k\bigr)=\frac{H+1}{n-k}.
\]
\end{lem}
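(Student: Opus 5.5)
By Lemma~\ref{lem: exch_repr}(i), conditional on $\mathcal G_n$ the sequence $\tilde d^{(U)}$ is uniform on $\Sigma_{M(\cT_n)}$. So, given the prefix $(\tilde d^{(U)}_1,\dots,\tilde d^{(U)}_k)=(b_1,\dots,b_k)$, the suffix $(\tilde d^{(U)}_{k+1},\dots,\tilde d^{(U)}_n)$ is uniform on the orderings $\Sigma_{M_{\rm rest}}$ of the residual multiset $M_{\rm rest}$ of size $n-k$. This is the counting step inside the proof of Lemma~\ref{lem: prefix_sampling}. The plan is therefore to write $\mathcal E_n$, on $A_k$, as a condition on the suffix alone, and then count the admissible suffix orderings with the cycle lemma.

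\textbf{Reduction to the suffix.} Write $s_m:=\sum_{i=1}^m(\tilde d^{(U)}_i-1)$. On $A_k$ every prefix summand is $\ge0$, so $s_m\ge0$ holds automatically for $m\le k$. Hence $\mathcal E_n\cap A_k$ is the event that $H+\sum_{i=k+1}^{m}(\tilde d^{(U)}_i-1)\ge0$ for all $k<m\le n-1$. The suffix $c_j:=\tilde d^{(U)}_{k+j}$, $j=1,\dots,N:=n-k$, has total $\sum_j(c_j-1)=(n-1-n)-H=-(H+1)$.

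Set $r:=H+1\ge1$ and $x_j:=1-c_j$, so that $x_j\le1$ and $\sum_j x_j=r$. The condition becomes that every partial sum $\sum_{j\le\ell}x_j\le H=r-1$ for $\ell<N$. I would restate this in the form of the Dvoretzky--Motzkin cycle lemma by reversing the sequence. Let $y_j:=x_{N+1-j}$. Then $\sum_{j\le\ell}x_j\le r-1$ for all $\ell<N$ is equivalent to $\sum_{j\le p}y_j\ge1$ for all $p\ge1$, that is, all partial sums of the reversed sequence are strictly positive. The steps $y_j$ are integers $\le1$ summing to $r$, which is exactly the setting of the cycle lemma: among the $N$ cyclic shifts of any such sequence, counted with multiplicity, exactly $r$ have all partial sums positive.

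\textbf{Counting.} Suffix orderings, being uniform on $\Sigma_{M_{\rm rest}}$, are invariant under cyclic shifts. Averaging over the uniformly random shift therefore shows that the conditional probability that a uniform element of $\Sigma_{M_{\rm rest}}$ is good equals $r/N=(H+1)/(n-k)$. To justify the averaging, use the same lifting as in Lemma~\ref{lem: exch_repr}(i): a uniform element composed with an independent uniform shift is again uniform. This holds regardless of periodicity, since shifts are counted with multiplicity, so there is no need for the free-action argument used for $r=1$. Since $M_{\rm rest}$ and $H$ are functions of the prefix, the result is measurable with respect to $\sigma(\mathcal G_n,\tilde d^{(U)}_1,\dots,\tilde d^{(U)}_k)$.

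\textbf{Main obstacle.} The only delicate point is the index bookkeeping in the reversal, namely checking that the constraint range $m\le n-1$ corresponds exactly to strict positivity of all partial sums $p=1,\dots,N$ of the reversed sequence. The last partial sum $p=N$ equals $r\ge1$ and is automatic. The remaining ones $p=1,\dots,N-1$ correspond to $\ell=N-p$, and the condition $\sum_{j\le\ell}x_j\le r-1$ at $\ell=N-p$ is the same as $\sum_{j\le p}y_j\ge1$. Once this is verified, together with the degenerate case $N=1$ (where the event is trivially true and $r=1=N$, consistent with the formula), the proof is complete.
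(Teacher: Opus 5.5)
Your proposal is correct and follows essentially the same route as the paper. You condition on the prefix so that the suffix is uniform on orderings of the residual multiset, rewrite $\mathcal E_n$ on $A_k$ as a first-passage condition on a suffix with total $-(H+1)$, apply the cycle lemma with $r=H+1$, and average over cyclic rotations. The differences are cosmetic: you reverse and negate the suffix to use the ``all partial sums positive'' form of the cycle lemma, where the paper applies the increments-$\ge -1$ form directly, and you count rotations with multiplicity instead of lifting to labelled permutations, which is equally valid since each rotation is a bijection of $\Sigma_{M_{\rm rest}}$.
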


\begin{proof}
Fix a realisation $M$ of $M(\cT_n)$ and condition on the event
\[
\mathcal F:=\bigl\{M(\cT_n)=M,\,\tilde d^{(U)}_1=a_1,\dots,\tilde d^{(U)}_k=a_k\bigr\},
\]
where $a_i\ge 1$ for each $i$. Set $h:=\sum_{i=1}^k(a_i-1)\ge 0$.

\smallskip
By Lemma~\ref{lem: prefix_sampling}, conditional on $\mathcal F$, the residual $(\tilde d^{(U)}_{k+1},\dots,\tilde d^{(U)}_n)$ is uniform on the orderings of the complementary multiset $M':=M\setminus\setminus\{a_1,\dots,a_k\}$ of size $n-k$, with $\sum_{a\in M'}(a-1)=-1-h$. Setting $W_m:=\sum_{j=1}^m(\tilde d^{(U)}_{k+j}-1)$, the event $\mathcal E_n$ on $A_k$ is equivalent to
\[
G:=\{W_m\ge -h\ \forall\,m=1,\dots,n-k-1\}.
\]

\smallskip
We now apply the cycle lemma to the residual. Attach distinguishable labels $\Lambda=\{\ell_1,\dots,\ell_{n-k}\}$ to the elements of $M'$, with value map $\delta:\Lambda\to\Z_{\ge 0}$ realising $M'$ as a multiset, and let $\Pi(\Lambda):=\{\sigma:\{1,\dots,n-k\}\to\Lambda\text{ bijection}\}$. The cyclic shift $\rho(\sigma)=(\sigma(2),\dots,\sigma(n-k),\sigma(1))$ generates a free $\Z/(n-k)\Z$ action on $\Pi(\Lambda)$. The cycle lemma applied to the integer sequence $(\delta(\sigma(1))-1,\dots,\delta(\sigma(n-k))-1)$, of length $n-k$, increments $\ge -1$ and total sum $-(h+1)$, that is, with $r=h+1$, yields that exactly $h+1$ of the $n-k$ rotations produce a value sequence in $G$. Hence, deterministically,
\[
N(\sigma):=\#\{j\in\{0,\dots,n-k-1\}:\rho^j\sigma\in G\}=h+1\qquad\text{for all }\sigma\in\Pi(\Lambda).
\]

\smallskip
Now let $\sigma$ be a uniformly random element of $\Pi(\Lambda)$, the labelled lift of the conditional law of $(\tilde d^{(U)}_{k+1},\dots,\tilde d^{(U)}_n)$ under $\mathcal F$. By rotation-invariance of the law of $\sigma$ on $\Pi(\Lambda)$,
\[
(n-k)\,\P(\sigma\in G\mid\mathcal F)
=\sum_{j=0}^{n-k-1}\P(\rho^j\sigma\in G\mid\mathcal F)
=\E[N(\sigma)\mid\mathcal F]=h+1.
\]
Since this holds for every realisation $M$ of $M(\cT_n)$ and every $(a_1,\dots,a_k)$ with $a_i\ge 1$, the claim follows.
\end{proof}

\medskip
Lemmas \ref{lem: dfs_uniform_Luk}--\ref{lem: cond_cycle} combine into an exact formula for the conditional tail probability of $\partial^{*}(\cT_n)$.

\begin{lem}\label{lem: master_identity}
For every $1\le k\le n-1$, almost surely,
\[
\P(\partial^{*}(\cT_n)\ge k\mid\mathcal G_n)
=\frac{n}{n-k}\Bigl(k\cdot\tfrac{L-1}{n-L}+1\Bigr)\frac{(n-L)_k}{(n)_k},
\]
with the convention $(n-L)_k/(n)_k=0$ if $k>n-L$.
\end{lem}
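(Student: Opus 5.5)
The plan is to combine the exchangeable representation of Lemma~\ref{lem: exch_repr} with the conditional cycle-lemma formula of Lemma~\ref{lem: cond_cycle}, and then compute a single conditional expectation of $H$. Fix a realisation $M$ of $M(\cT_n)$ with $L$ leaves. By Lemma~\ref{lem: exch_repr}(iii), the DFS sequence has the law of $\tilde d^{(U)}$ conditioned on $\mathcal E_n$. Since $\{\partial^{*}(\cT_n)\ge k\}=\{d^{\mathrm{dfs}}_1>0,\dots,d^{\mathrm{dfs}}_k>0\}$, this gives
\[
\P(\partial^{*}(\cT_n)\ge k\mid\mathcal G_n)=\P(A_k\mid\mathcal E_n,\mathcal G_n)=\frac{\P(A_k\cap\mathcal E_n\mid\mathcal G_n)}{\P(\mathcal E_n\mid\mathcal G_n)}=n\,\P(A_k\cap\mathcal E_n\mid\mathcal G_n),
\]
where the last step uses Lemma~\ref{lem: exch_repr}(ii).

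Next I condition on the prefix. Since $A_k$ is measurable with respect to $(\tilde d^{(U)}_1,\dots,\tilde d^{(U)}_k)$, the tower property together with Lemma~\ref{lem: cond_cycle} (valid because $k\le n-1$) yields
\[
\P(A_k\cap\mathcal E_n\mid\mathcal G_n)=\E\Bigl[\mathbf 1_{A_k}\tfrac{H+1}{n-k}\Bigm|\mathcal G_n\Bigr]=\frac{1}{n-k}\,\P(A_k\mid\mathcal G_n)\,\bigl(\E[H\mid A_k,\mathcal G_n]+1\bigr).
\]
Lemma~\ref{lem: prefix_sampling} supplies $\P(A_k\mid\mathcal G_n)=(n-L)_k/(n)_k$. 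If $k>n-L$ this probability vanishes, which matches the stated convention; so it remains to treat $k\le n-L$.

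The only genuine computation is $\E[H\mid A_k,\mathcal G_n]$. By Lemma~\ref{lem: prefix_sampling} the prefix is an ordered sample without replacement from $M$, so conditioning on $A_k$ (all draws positive) makes it an ordered sample without replacement of size $k$ from the sub-multiset of the $n-L$ positive outdegrees. By exchangeability each draw then has the marginal law of a uniform pick from those $n-L$ values. The total of $(c-1)$ over the positive outdegrees equals $(n-1)-(n-L)=L-1$, using $\sum_c cM(c)=n-1$. Linearity therefore gives
\[
\E[H\mid A_k,\mathcal G_n]=k\,\frac{L-1}{n-L}.
\]
Substituting this back produces exactly the claimed identity.

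I expect no real obstacle here. The points needing care are bookkeeping: justifying that conditioning a without-replacement sample on all draws being positive yields a without-replacement sample from the positive sub-multiset (this follows directly from the explicit product formula in Lemma~\ref{lem: prefix_sampling}), and noting that $n-L\ge1$ for $n\ge2$, so the quotient $(L-1)/(n-L)$ is well defined.
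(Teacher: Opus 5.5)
Your proposal is correct and follows essentially the same route as the paper. It reduces to $n\,\P(A_k\cap\mathcal E_n\mid\mathcal G_n)$ via Lemma~\ref{lem: exch_repr}, inserts Lemma~\ref{lem: cond_cycle} by the tower property, and evaluates $\E[H\mid A_k,\mathcal G_n]=k(L-1)/(n-L)$ by exchangeability of a without-replacement sample from the $n-L$ positive outdegrees, together with Lemma~\ref{lem: prefix_sampling}; your separate treatment of $k>n-L$, where $\P(A_k\mid\mathcal G_n)=0$ and conditioning on $A_k$ is undefined, is a small point of care the paper leaves implicit.
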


\begin{proof}
By Lemma~\ref{lem: dfs_uniform_Luk} and the DFS interpretation of $\{\partial^{*}(\cT_n)\ge k\}$,
\[
\P(\partial^{*}(\cT_n)\ge k\mid\mathcal G_n)=\P(A_k\mid\mathcal E_n,\mathcal G_n).
\]
Using Lemma~\ref{lem: exch_repr} (ii) for the denominator and Lemma~\ref{lem: cond_cycle} for the numerator,
\begin{align*}
\P(A_k\mid\mathcal E_n,\mathcal G_n)
&=\frac{\P(A_k\cap\mathcal E_n\mid\mathcal G_n)}{1/n}
=n\,\E\bigl[1_{A_k}\,\P(\mathcal E_n\mid\mathcal G_n,\tilde d^{(U)}_1,\dots,\tilde d^{(U)}_k)\,\bigm|\,\mathcal G_n\bigr]\\
&=\frac{n}{n-k}\,\E\bigl[(H+1)1_{A_k}\mid\mathcal G_n\bigr],
\end{align*}
since $1_{A_k}=0$ off $A_k$. It remains to evaluate the last expectation.

\smallskip
By Lemma~\ref{lem: prefix_sampling}, conditional on $A_k \cap \mathcal G_n$ the prefix $(\tilde d^{(U)}_1,\dots,\tilde d^{(U)}_k)$ is a uniform ordered sample without replacement from the $n-L$ positive outdegrees of $\cT_n$. Since the sum of these positive outdegrees equals $n-1$, exchangeability gives
\[
\E[\tilde d^{(U)}_i\mid A_k,\mathcal G_n]=\frac{n-1}{n-L}\quad\text{for every }i\le k,
\quad\text{so}\quad
\E[H\mid A_k,\mathcal G_n]=k\cdot\frac{L-1}{n-L}.
\]
Combining with $\P(A_k\mid\mathcal G_n)=(n-L)_k/(n)_k$ from~\eqref{eq: Ak_prob},
\[
\E[(H+1)1_{A_k}\mid\mathcal G_n]=\Bigl(k\cdot\tfrac{L-1}{n-L}+1\Bigr)\frac{(n-L)_k}{(n)_k}.
\]
This concludes the proof. 
\end{proof}

\medskip
The identity in Lemma~\ref{lem: master_identity} is exact but somewhat opaque. Bounding each of its three factors crudely yields a more transparent expression in terms of the proportion of leaves $\eta=L/n$ alone, which is the form we will actually use.

\begin{lem}\label{lem: cond_bound}
Set $\eta:=L/n\in(0,1)$. For every $1\le k\le n/2$,
\[
\P(\partial^{*}(\cT_n)\ge k\mid\mathcal G_n)
\le 2\Bigl(\tfrac{k\eta}{1-\eta}+1\Bigr)(1-\eta)^k
=2k\eta(1-\eta)^{k-1}+2(1-\eta)^k.
\]
\end{lem}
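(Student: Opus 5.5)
The plan is to start from the exact identity of Lemma~\ref{lem: master_identity} and bound its three factors separately, using only $1\le k\le n/2$ and $1\le L\le n-1$.

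\emph{Prefactor.} For $k\le n/2$ we have $n-k\ge n/2$, so $\frac{n}{n-k}\le 2$.

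\emph{Middle factor.} Since $L\ge 1$, we have $L-1\le L=\eta n$. Also $n-L=(1-\eta)n>0$. Therefore
\[
k\,\tfrac{L-1}{n-L}+1\le \tfrac{k\eta}{1-\eta}+1 .
\]

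\emph{Falling-factorial ratio.} Write
\[
\frac{(n-L)_k}{(n)_k}=\prod_{i=0}^{k-1}\frac{n-L-i}{n-i}.
\]
Each factor satisfies $\frac{n-L-i}{n-i}=1-\frac{L}{n-i}\le 1-\frac{L}{n}=1-\eta$ whenever $n-L-i\ge 0$. If $k>n-L$, the convention gives the value $0$, so the bound holds trivially. Hence $(n-L)_k/(n)_k\le(1-\eta)^k$ in all cases.

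Multiplying the three nonnegative bounds gives
\[
\P(\partial^{*}(\cT_n)\ge k\mid\mathcal G_n)\le 2\Bigl(\tfrac{k\eta}{1-\eta}+1\Bigr)(1-\eta)^k .
\]
Expanding the product, $\frac{k\eta}{1-\eta}(1-\eta)^k=k\eta(1-\eta)^{k-1}$, which yields the second expression in the statement. This step is valid because $\eta<1$ and $k\ge1$.

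There is no real obstacle here. The only point requiring care is the falling-factorial bound, namely checking that each factor is at most $1-\eta$ and that the degenerate case $k>n-L$ is covered by the zero convention. The hypothesis $n\ge2$, which gives $\eta\in(0,1)$, is what makes $1/(1-\eta)$ finite.
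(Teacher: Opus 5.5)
Your proposal is correct and follows the paper's proof: you start from the exact identity of Lemma~\ref{lem: master_identity} and bound the three factors by $n/(n-k)\le 2$, $(L-1)/(n-L)\le \eta/(1-\eta)$, and $(n-L)_k/(n)_k\le(1-\eta)^k$. The only difference is that you compare each factor $1-L/(n-i)$ directly with $1-L/n$ instead of citing monotonicity in $i$, and you also spell out the degenerate case $k>n-L$, where the product vanishes.
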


\begin{proof}
Start from Lemma~\ref{lem: master_identity} and bound each factor:
$n/(n-k)\le 2$ for $k\le n/2$;
$(L-1)/(n-L)\le L/(n-L)=\eta/(1-\eta)$;
and $(n-L)_k/(n)_k\le(1-\eta)^k$, since
\[
\prod_{i=0}^{k-1}\frac{n-L-i}{n-i}\le\Bigl(\frac{n-L}{n}\Bigr)^k=(1-\eta)^k
\]
because $i\mapsto(n-L-i)/(n-i) = 1-L/(n-i)$ is non-increasing.
\end{proof}
\smallskip

With Lemmas \ref{lem: dfs_uniform_Luk}--\ref{lem: cond_bound} in place, the proof of the proposition reduces to taking expectations of the bound in Lemma~\ref{lem: cond_bound} and controlling the resulting integrand on the two regimes $\eta\ge c_0$ and $\eta<c_0$. The first regime gives genuine exponential decay in $k$ from the factors $(1-\eta)^k$; the second is rare by Janson's leaf-concentration estimate and contributes a term that can be absorbed for $n$ large.

\begin{propo}\label{lem: sub_exp_critical}
Let $(\cT_n)_{n\geq1}$ be a sequence of size-conditioned simply generated trees of type~I, II or~III. Then there are constants $n_0\ge 1$ and $C,c>0$,
depending only on the weight sequence, such that\[
\P(\partial(\cT_n)\ge k)\le Ce^{-ck}\,, \quad \text{ for any } n \geq n_0 \text{ and any } k \geq 0.
\]
\end{propo}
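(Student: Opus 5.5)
Since $\partial(\cT_n)\le\partial^{*}(\cT_n)$ pointwise, it suffices to prove the exponential tail for $\partial^{*}(\cT_n)$, and we start from the conditional bound of Lemma~\ref{lem: cond_bound}. Taking expectations over $\mathcal G_n$ gives, for $1\le k\le n/2$,
\[
\P(\partial(\cT_n)\ge k)\le \E\Bigl[2k\eta(1-\eta)^{k-1}+2(1-\eta)^k\Bigr],\qquad \eta=L/n .
\]
The key quantity is therefore the proportion of leaves $\eta$. In types~I and~II we have $p_0=\P(\xi=0)>0$. In type~III the leaf proportion tends to $1$, and the limit value is effectively $p_0=1$ in the convention $\xi=0$. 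So in every case there is a target constant $c_0:=p_0/2>0$. We split according to the event $B_n:=\{\eta\ge c_0\}$.

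On $B_n$ the integrand is at most $2k(1-c_0)^{k-1}+2(1-c_0)^k$, since $x\mapsto x(1-x)^{k-1}\le(1-x)^{k-1}$. This is bounded by $C_1e^{-c_1k}$ for suitable $c_1<-\log(1-c_0)$. On $B_n^c$ the integrand is at most $2k+2$, so its contribution is at most $(2k+2)\,\P(L<c_0 n)$. Here we invoke Janson's universal concentration for degree counts in simply generated trees (valid for all types): $\P(|L/n-p_0|>\varepsilon)\le Ce^{-c\varepsilon^2 n}$ for $n$ large. This gives $\P(B_n^c)\le C_2e^{-c_2 n}$. For $k\le n/2$ we have $e^{-c_2n}\le e^{-2c_2k}$, so $(2k+2)C_2e^{-c_2n}\le C_3e^{-c_2k}$. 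Combining the two regimes yields $\P(\partial(\cT_n)\ge k)\le Ce^{-ck}$ for $1\le k\le n/2$, and $k=0$ is trivial after enlarging $C$.

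For $k>n/2$ the conditional formula is unavailable because $n/(n-k)$ blows up. Instead we use monotonicity: $\P(\partial\ge k)\le\P(\partial\ge\lfloor n/2\rfloor)\le Ce^{-c\lfloor n/2\rfloor}$. We can also use the trivial fact that $\partial(\cT_n)\le n-1$, so the probability vanishes for $k\ge n$. For $n/2<k<n$ we have $e^{-c n/2}\le e^{-ck/2}$, so the bound holds with $c$ replaced by $c/2$. The threshold $n_0$ is chosen so that Janson's concentration estimate applies and $\lfloor n/2\rfloor\ge1$.

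The main obstacle is the concentration step in the atypical regime. We need a version of the leaf-concentration bound that is uniform across types, in particular in type~III and in type~II with condensation, where $\cT_n$ is not a conditioned Galton--Watson tree in a naive sense. The decay only needs to be fast enough, for instance $o(n^{-2})$ or any stretched exponential, to beat the factor $2k+2\le n+2$ and be converted into $e^{-ck}$ via $k\le n/2$. If only subexponential concentration like $e^{-c\sqrt n}$ were available, this conversion would fail for $k$ comparable to $n$. In that case I would first try to show that the Janson bound indeed gives $e^{-cn}$ decay for a fixed deviation $\varepsilon=p_0/2$; failing that, I would treat the moderate range separately, noting that for uniform integrability polynomial decay of sufficiently high order also suffices.
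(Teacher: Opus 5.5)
Your proposal is correct and is essentially the paper's proof: you bound $\partial$ by the leftmost-leaf depth $\partial^{*}$ and average Lemma~\ref{lem: cond_bound} over $\mathcal{G}_n$ for $1\le k\le n/2$; you then split on $\{\eta\ge c_0\}$, using the exponential-rate leaf concentration of \cite[Thm.~11.4]{Janson2012} on the complement, and you use monotonicity in $k$ at $\lfloor n/2\rfloor$ for $k>n/2$. The paper takes that $e^{-an}$ concentration as given, so your closing contingency is not needed. Its claim that $o(n^{-2})$ or stretched-exponential decay would give an $e^{-ck}$ bound is also inaccurate for $k$ of order $n$, as you yourself note for $e^{-c\sqrt n}$.
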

\smallskip

\begin{proof}
Since $\partial(\cT_n)\le\partial^{*}(\cT_n)$ pointwise, it suffices to prove the bound for $\partial^{*}(\cT_n)$.

Fix $c_0\in(0,p_0)$. By Lemma~\ref{lem: cond_bound}, for $1\le k\le n/2$,
\begin{equation}\label{eq: tail_two_terms}
\P(\partial^{*}(\cT_n)\ge k\mid\mathcal G_n)\le 2k\eta(1-\eta)^{k-1}+2(1-\eta)^k,\qquad \eta=L/n.
\end{equation}
We bound the expectation of \eqref{eq: tail_two_terms} by splitting according to whether $\eta\ge c_0$ or $\eta<c_0$.

\smallskip
\emph{Regime $\eta\ge c_0$.} Then $\eta(1-\eta)^{k-1}\le(1-c_0)^{k-1}$ and $(1-\eta)^k\le(1-c_0)^k$, so the right-hand side of \eqref{eq: tail_two_terms} is $\le 2(k+1)(1-c_0)^{k-1}$. Setting $c_1:=-\log(1-c_0)>0$ and using that $x\mapsto(x+1)e^{-c_1 x/2}$ is bounded on $[0,\infty)$, there exist constants $C_2,c_2>0$ (depending only on $c_0$) with
\[
\E\left[\bigl(2k\eta(1-\eta)^{k-1}+2(1-\eta)^k\bigr)1_{\{\eta\ge c_0\}}\right]\le C_2 e^{-c_2 k}.
\]

\emph{Regime $\eta<c_0$.} Pointwise on $\{1\le L\le n-1\}$, $2k\eta(1-\eta)^{k-1}+2(1-\eta)^k\le 2k+2\le 4k$. By Janson's leaf concentration for conditioned Galton--Watson trees (\cite[Theorem~11.4]{Janson2012}, restated as \cite[Theorem~5.1]{AddarioBerryBrandenbergerHamdanKerriou2022}), applied with $\varepsilon:=p_0-c_0>0$, there exist $a>0$ and $n_0\ge 1$ such that for every $n\ge n_0$,
\[
\P(L/n<c_0)\le\P(|L/n-p_0|>\varepsilon)\le e^{-an}.
\]
Hence
\[
\E\left[\bigl(2k\eta(1-\eta)^{k-1}+2(1-\eta)^k\bigr)1_{\{\eta<c_0\}}\right]\le 4k\,e^{-an}\le 4n\,e^{-an}.
\]
\smallskip

For $1\le k\le n/2$ and $n\ge n_0$,
\[
\P(\partial^{*}(\cT_n)\ge k)\le C_2 e^{-c_2 k}+4n\,e^{-an}.
\]
For $n$ large enough, $4n e^{-an/2}\le 1$, so $4n e^{-an}\le e^{-an/2}\le e^{-ak}$ (since $n\ge 2k$). Hence
\[
\P(\partial^{*}(\cT_n)\ge k)\le C_3 e^{-c_3 k},\qquad 1\le k\le n/2,
\]
with $C_3:=C_2+1$ and $c_3:=\min(c_2,a)>0$.
\smallskip

For $k=0$, the trivial bound $\P(\partial^{*}(\cT_n)\ge 0)=1$ is dominated by any $C\ge 1$. For $n/2<k\le n$, monotonicity in $k$ and the previous step applied at $k_0:=\lfloor n/2\rfloor\ge k/2-1/2$ give
\[
\P(\partial^{*}(\cT_n)\ge k)\le C_3 e^{-c_3 k_0}\le C_3 e^{c_3/2}e^{-(c_3/2)k}.
\]

Setting $C:=\max(1,C_3 e^{c_3/2})$ and $c:=c_3/2>0$ yields $\P(\partial^{*}(\cT_n)\ge k)\le C e^{-ck}$ for every $n\ge n_0$ and every $k\ge 0$, and the bound transfers to $\partial(\cT_n)$ through the inequality $\partial(\cT_n) \leq \partial^{*}(\cT_n)$, which holds almost surely. 
\end{proof}

Using the previous tail bound, we obtain uniform integrability.

\begin{lem}\label{lem: integrabilidad} Let $(\cT_n)_{n\geq1}$ be a sequence of size-conditioned simply generated trees of type~I, II or~III. Then the family of random variables
$\{\partial(\cT_n)\}_{n\ge1}$ is uniformly integrable of every order.
\end{lem}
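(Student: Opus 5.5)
The plan is to derive uniform integrability of every order directly from the exponential tail bound of Proposition~\ref{lem: sub_exp_critical}, treating separately the finitely many indices $n<n_0$ where that bound is not asserted. Fix an order $p\ge 1$. It suffices to show $\sup_{n}\E\bigl[\partial(\cT_n)^{q}\bigr]<\infty$ for some $q>p$; a family bounded in $L^q$ with $q>p$ has uniformly integrable $p$-th powers, by the de la Vallée-Poussin criterion or simply Hölder plus Markov:
\[
\E\bigl[\partial(\cT_n)^p 1_{\{\partial(\cT_n)^p>K\}}\bigr]\le K^{-(q-p)/p}\,\E\bigl[\partial(\cT_n)^q\bigr].
\]
In fact we will obtain finiteness of all exponential moments $\E[e^{s\partial(\cT_n)}]$ for $s<c$, uniformly in $n$, which dominates every polynomial moment at once.

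For $n\ge n_0$, write $\partial(\cT_n)$ as a nonnegative integer random variable and use the layer-cake formula
\[
\E\bigl[\partial(\cT_n)^q\bigr]=\sum_{k\ge1}\bigl(k^q-(k-1)^q\bigr)\P(\partial(\cT_n)\ge k)\le \sum_{k\ge 1} q\,k^{q-1}\,C e^{-ck}.
\]
The right-hand side is a finite constant depending only on $q$, $C$ and $c$, hence only on the weight sequence. For the finitely many admissible $n<n_0$, note that $\partial(\cT_n)\le n-1$ deterministically, since the distance to the border in an $n$-vertex tree is at most its height. Therefore $\E[\partial(\cT_n)^q]\le n_0^q$ for these $n$. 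Taking the maximum of the two bounds gives $\sup_n\E[\partial(\cT_n)^q]<\infty$ for every $q$, and hence uniform integrability of $\{\partial(\cT_n)^p\}_n$ for every $p$.

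There is no real obstacle here. The only point requiring care is that Proposition~\ref{lem: sub_exp_critical} holds only for $n\ge n_0$, which the deterministic bound handles. One should also restrict throughout to those $n$ for which $\cT_n$ is defined, which changes nothing. As a consequence, combining this with Theorem~\ref{theo: conv_dist_partial} and the standard fact that convergence in distribution plus uniform integrability yields convergence of moments gives $\E[\partial(\cT_n)^p]\to\E[\partial(\hat\cT)^p]$ for all $p\ge1$, as claimed in Theorem~\ref{te:root}.
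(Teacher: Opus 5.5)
Your proof is correct and follows essentially the paper's route. Both arguments derive uniform integrability from the exponential tail bound of Proposition~\ref{lem: sub_exp_critical}: the paper bounds the tail contribution $\sum_{j>K} j^p\,\P(\partial(\cT_n)\ge j)$ directly, while you pass through a uniform $L^q$ bound with $q>p$, which is an equally standard criterion. Your explicit treatment of the finitely many $n<n_0$ via $\partial(\cT_n)\le n-1$ fills in a step the paper leaves implicit.
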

        
		\begin{proof} Fix an integer \(k \geq 1\). Lemma \ref{lem: sub_exp_critical} gives that the probability of the right-tail of $\partial(\cT_n)$ is exponentially bounded. Applying this lemma there are constants $n_0 \geq 1$ and \(C, c > 0\) (only depending on the offspring distribution) such that
			\[
			\P(\partial(\cT_n) \geq k) \leq C e^{-ck}\,, \quad \text{ for any } n \geq n_0 \text{ and } k \geq 0\,.
			\]
			
			Fix a real number $p \ge0$. For any $n\ge n_0$,
\[
\E\bigl[\partial(\cT_n)^{p}\bigr]
\le \sum_{k\ge1} k^{p}\,\P(\partial(\cT_n)\ge k)
\le C\sum_{k\ge1} k^{p} e^{-ck}<+\infty,
\]
so in particular $\sup_{n\ge n_0}\E[\partial(\cT_n)^{p}]<+\infty$. A uniform moment bound does
not by itself yield uniform integrability, so we estimate the tail contribution directly.
For every $K\ge0$, using $\P(\partial(\cT_n)=j)\le\P(\partial(\cT_n)\ge j)$,
\[
\sup_{n\ge n_0}\E\bigl[\partial(\cT_n)^{p}\,\mathbf 1_{\{\partial(\cT_n)>K\}}\bigr]
\le \sum_{j>K} j^{p}\,\sup_{n\ge n_0}\P(\partial(\cT_n)\ge j)
\le C\sum_{j>K} j^{p} e^{-cj}.
\]
The right-hand side does not depend on $n$ and, since $\sum_{j\ge1} j^{p} e^{-cj}<\infty$,
tends to $0$ as $K\to\infty$. Hence
\[
\lim_{K\to\infty}\ \sup_{n\ge n_0}\E\bigl[\partial(\cT_n)^{p}\,\mathbf 1_{\{\partial(\cT_n)>K\}}\bigr]=0,
\]
which is precisely uniform integrability of $\{\partial(\cT_n)^{p}\}_{n\ge n_0}$. Since
$p\ge0$ was arbitrary, $\{\partial(\cT_n)\}_{n\ge1}$ is uniformly integrable of every order.
		\end{proof}

\begin{theo} Let $(\cT_n)_{n\ge1}$ be a sequence of size-conditioned simply generated trees of
type~I, II or~III. Then, for types I and II, and for every $p>0$,
\[
\lim_{\substack{n \to \infty}}
\E\bigl[\partial(\cT_n)^{p}\bigr]
=\E\bigl[\partial(\hat{\cT})^{p}\bigr]
=
\displaystyle\sum_{k=1}^{\infty}\bigl(k^{p}-(k-1)^{p}\bigr)c_k,
\]
In type~III the limit is degenerate: $\partial(\hat{\cT})=1$ almost surely, so every moment of the limit equals~$1$.
\end{theo}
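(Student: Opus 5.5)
The plan is to combine the distributional convergence of Theorem~\ref{theo: conv_dist_partial} with the uniform integrability of Lemma~\ref{lem: integrabilidad}. This gives convergence of moments, and the explicit series then follows from a summation-by-parts identity for the moments of a nonnegative integer-valued random variable. Here I read $c_k$ as $\P(\partial(\hat{\cT})\ge k)$. The intended definition is presumably the limit of $\P(\partial(\cT)\ge k \mid |\cT|=n)$ appearing in Theorem~\ref{theo: conv_dist_partial}, which coincides with $\P(\partial(\hat{\cT})\ge k)$ by that theorem.

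First, fix $p>0$. Theorem~\ref{theo: conv_dist_partial} gives $\partial(\cT_n)\xrightarrow{d}\partial(\hat{\cT})$, and by the continuous mapping theorem $\partial(\cT_n)^p\xrightarrow{d}\partial(\hat{\cT})^p$.

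The only subtle point is that in principle $\partial(\hat{\cT})$ takes values in $\mathbb N\cup\{\infty\}$. Proposition~\ref{lem: sub_exp_critical} rules this out: since $\{\partial\ge k\}$ is a continuity set, $\P(\partial(\hat{\cT})\ge k)=\lim_n\P(\partial(\cT_n)\ge k)\le Ce^{-ck}$. Hence $\partial(\hat{\cT})<\infty$ almost surely, with an exponential tail. Without this, the limit might have mass at $\infty$ and the mapping argument would break down.

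Next, by Lemma~\ref{lem: integrabilidad}, the family $\{\partial(\cT_n)^p\}_{n\ge n_0}$ is uniformly integrable. The standard result that convergence in distribution plus uniform integrability implies convergence of expectations (e.g.\ \cite[Thm.~3.5]{MR1700749}) then yields $\E[\partial(\cT_n)^p]\to\E[\partial(\hat{\cT})^p]$. Alternatively, one can argue directly: write $\E[\partial(\cT_n)^p]=\sum_{k\ge1}(k^p-(k-1)^p)\P(\partial(\cT_n)\ge k)$. Each term converges, and the terms are dominated uniformly in $n\ge n_0$ by $(k^p-(k-1)^p)Ce^{-ck}\le \max(1,p)\,k^{\max(p-1,0)}Ce^{-ck}$, which is summable. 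Dominated convergence then gives both the limit and its series form at once. I would present this second route, since it produces the displayed formula directly.

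The series identity itself, $\E[X^p]=\sum_{k\ge1}(k^p-(k-1)^p)\P(X\ge k)$ for $X\in\{0,1,2,\dots\}$, is obtained by writing $X^p=\sum_{k=1}^{X}(k^p-(k-1)^p)$ and applying Tonelli.

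Finally, in type~III, Remark~\ref{rem:type_III} gives $\partial(\hat{\cT})=1$ almost surely, so every moment of the limit equals $1$, and the same argument gives $\E[\partial(\cT_n)^p]\to1$. I do not see a real obstacle. The only points needing care are the finiteness of $\partial(\hat{\cT})$ and confirming what $c_k$ denotes, both settled above.
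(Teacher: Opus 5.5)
Your proposal is correct and is essentially the paper's proof. The paper likewise combines the distributional convergence of Theorem~\ref{theo: conv_dist_partial} with the uniform integrability of Lemma~\ref{lem: integrabilidad} and invokes \cite[Thm.~3.5]{MR1700749}; your preferred dominated-convergence route on the tail-sum series uses the same uniform exponential bound of Proposition~\ref{lem: sub_exp_critical}, and simply folds into one step the series identity and the finiteness of the limit, which the paper records separately in Proposition~\ref{propo: moments_kesten_distance_finite} and Lemma~\ref{lem: exponential_c_k}.
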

\begin{proof} The result follows by combining Theorems \ref{theo: conv_dist_partial}, that is, convergence in distribution with the uniform integrability of any order of the sequence of random variables \(\{\partial(\cT_n)\}_{n\geq 1}\), see Lemma \ref{lem: integrabilidad} above. These two properties together imply convergence of the moments, see, for instance, Theorem 3.5 of \cite[p. 30]{MR1700749}.
\end{proof}

\subsection{Calculation of limiting probabilities}

Let $G(z) = \sum_{j = 0}^{\infty} p_j z^j$, for $|z| \leq 1$, be the probability generating function of $\xi$. Additionally, we define $F(z) = G(z) - p_0$, and for any $j \geq 1$, we consider
\begin{align*}
	F_j(z) = F(F_{j-1}(z))\,,\quad \text{ for any } |z|\le1,
\end{align*}
the $j$-th iterate of $F(z)$ with the convention that $F_0(z) = z$.
\smallskip

We now compute the tail probabilities of $\partial(\hat{\cT})$. Because the construction of $\hat{\cT}$ is the same in all three regimes, a single recurrence settles the three types at once; the regime enters only through the value $m = G'(1) = \E(\xi)$.

\begin{propo}\label{lem: c_k}
	Let $(\cT_n)_{n\geq1}$ be a sequence of size-conditioned simply generated trees of type~I, II or~III, and set $c_k := \P(\partial(\hat{\cT}) \geq k)$. Then $c_0 = c_1 = 1$ and, for every $k \geq 1$,
	\begin{align}\label{eq: describe_c_k}
		c_k = \prod_{j=1}^{k-1} G'(F_j(1))\,.
	\end{align}
	For types~I and~II this admits the closed form
	\begin{align}\label{eq: closed_c_k}
		c_k = {F_k'(1)}/{m}\,, \qquad \text{ for every } k \geq 1\,,
	\end{align}
	so that $c_k = F_k'(1)$ when $m = 1$ (type~I) and $c_k = F_k'(1)/m$ when $0 < m < 1$ (type~II). In type~III the limit $\hat{\cT}$ is the infinite star, and hence $c_k = 1$, for any $k \leq 1$ and $c_k = 0$, for any $k \geq 2$.
\end{propo}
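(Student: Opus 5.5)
The plan is to derive a one-step recursion for $c_k$ by conditioning on the offspring of the root of $\hat{\cT}$, and then unroll it. First I would treat normal subtrees. Let $q_k:=\P(\partial(\cT)\ge k)$ for the Bienaym\'e--Galton--Watson tree $\cT$ with offspring $\xi$. For $k\ge1$ we have $\partial(\cT)\ge k$ if and only if the root has at least one child and every child's fringe subtree has distance to the border at least $k-1$. These fringe subtrees are i.i.d.\ copies of $\cT$. Conditioning on the root degree gives
\[
q_k=\sum_{j\ge1}p_j\,q_{k-1}^{\,j}=G(q_{k-1})-p_0=F(q_{k-1}),\qquad q_0=1,
\]
and hence $q_k=F_k(1)$ for all $k\ge0$. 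In types~I and~II we have $p_0>0$, so $q_{k}\le q_1=1-p_0<1$ for every $k\ge1$.

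Next I would apply the same decomposition at the special root of $\hat{\cT}$. Since $\hat{\xi}\ge1$ almost surely, the root is never a leaf, so $c_1=1$ (and trivially $c_0=1$). Fix $k\ge2$ and split according to the value of $\hat{\xi}$.
\begin{itemize}
\item On $\{\hat{\xi}=\infty\}$, all children are normal, i.e.\ i.i.d.\ copies of $\cT$. We need infinitely many independent events, each of probability $q_{k-1}<1$, to occur simultaneously. This has probability $0$, by the same argument as in the type~II case of Lemma~\ref{lem: partial_continuous}.
\item On $\{\hat{\xi}=j<\infty\}$, which has probability $jp_j$, the special child carries an independent copy of $\hat{\cT}$. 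It must satisfy $\partial\ge k-1$, which has probability $c_{k-1}$. The $j-1$ normal children each need $\partial\ge k-1$ independently, with probability $q_{k-1}$ each.
\end{itemize}
This yields
\[
c_k=c_{k-1}\sum_{j\ge1}j\,p_j\,q_{k-1}^{\,j-1}=c_{k-1}\,G'(F_{k-1}(1)).
\]
Induction from $c_1=1$ then gives \eqref{eq: describe_c_k}. The point I would check most carefully is the bookkeeping of independence: conditionally on the root degree and on which child is special, the subtree at the special child is distributed as $\hat{\cT}$ and is independent of the normal subtrees. This is immediate from the construction in Section~\ref{sec:local_limits}. Together with the treatment of the atom at $\infty$, it is the only genuinely delicate step.

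For the closed form I would use the chain rule on the iterates. Since $F'=G'$, we get
\[
F_k'(1)=\prod_{j=0}^{k-1}F'(F_j(1))=G'(1)\prod_{j=1}^{k-1}G'(F_j(1))=m\,c_k,
\]
which gives \eqref{eq: closed_c_k} whenever $m>0$, that is, in types~I and~II. In type~III we have $\xi=0$, so $G\equiv1$ and $G'\equiv0$. Then \eqref{eq: describe_c_k} gives $c_1=1$ (empty product) and $c_k=0$ for $k\ge2$, in agreement with $\hat{\cT}$ being the infinite star with $\partial(\hat{\cT})=1$.
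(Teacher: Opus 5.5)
Your proposal is correct and matches the paper's proof: you condition on the offspring of the special root to get $c_k=G'(F_{k-1}(1))\,c_{k-1}$, discard the atom $\{\hat{\xi}=\infty\}$, iterate from $c_1=1$, and use the chain rule $F_k'(1)=m\,c_k$. The explicit derivation of $\P(\partial(\cT)\ge k)=F_k(1)$ and of the type~III values from $G'\equiv 0$ only fill in steps the paper states without proof.
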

\begin{proof}
	We compute the probability that $\hat{\cT}$ has no leaf before level $k$, that is, that $\partial(\hat{\cT}) \geq k$. The root is special, so it is never a leaf and $c_0 = c_1 = 1$. Fix $k \geq 2$ and condition on the offspring of the root. Among the children of a special vertex with finitely many children, exactly one, chosen uniformly at random, is special and roots an independent copy of $\hat{\cT}$, while the remaining children are normal and root independent copies of $\cT$. For $\partial(\hat{\cT}) \geq k$ the special child must satisfy $\partial \geq k-1$ and each normal child must satisfy $\partial(\cT) \geq k-1$. Writing $Z_1(\hat{\cT})$ for the number of children of the root, and using that the special child is equally likely to be any of the $j$ offspring (so the factor $1/j$ cancels the $j$ in $\P(\hat{\xi}=j) = j p_j$), we obtain
	\begin{align}\label{eq: recurrence}
		\begin{split}
			\P(\partial(\hat{\cT}) \geq k)
			&= \sum_{j=1}^{\infty} \P(\hat{\xi} = j)\, \P(\partial(\hat{\cT}) \geq k-1)\, \P(\partial(\cT) \geq k-1)^{\,j-1} \\[.1cm]
			&= G'\bigl(\P(\partial(\cT) \geq k-1)\bigr)\, \P(\partial(\hat{\cT}) \geq k-1)\,,
		\end{split}
	\end{align}
	where we used $\sum_{j \geq 1} j p_j\, z^{j-1} = G'(z)$, for any $|z| \leq 1$. If the root has infinitely many children (possible in types~II and~III), then almost surely one of the normal copies of $\cT$ is a single leaf, so $\partial(\hat{\cT}) = 1$; this is consistent with the atom $\P(\hat{\xi} = \infty) = 1-m$ contributing $0$ to \eqref{eq: recurrence} for $k \geq 2$. Since $\P(\partial(\cT) \geq j) = F_j(1)$, iterating \eqref{eq: recurrence} from $c_1 = 1$ yields \eqref{eq: describe_c_k}.
	\smallskip
	
	For the closed form \eqref{eq: closed_c_k}, recall $F'(z) = G'(z)$, so by the chain rule
	\begin{align*}
		F_k'(1) = \prod_{j=0}^{k-1} F'(F_j(1)) = F'(1)\prod_{j=1}^{k-1} G'(F_j(1)) = m\,c_k\,,
	\end{align*}
	using $F'(1) = G'(1) = m$. Dividing by $m > 0$ (types~I and~II) gives $c_k = F_k'(1)/m$, which equals $F_k'(1)$ when $m = 1$. The type~III claim is immediate from the infinite-star description.
\end{proof}
\medskip

We now derive an exponential upper bound for $c_k$, which implies that the moments of $\partial(\hat{\cT})$ are finite.

\begin{lem}\label{lem: exponential_c_k}
	Let $(\cT_n)_{n\geq1}$ be a sequence of size-conditioned simply generated trees of type~I or~II. Then there are constants $C, c > 0$, depending only on the offspring distribution, such that
	\begin{align}\label{eq: exponential_c_k}
		c_k = \P(\partial(\hat{\cT}) \geq k) \leq C e^{-ck}\,, \qquad k \geq 1\,.
	\end{align}
	More precisely, $c_k \leq (1 - p_0^2)^{k-1}$ in type~I and $c_k \leq m^{\,k-1}$ in type~II.
\end{lem}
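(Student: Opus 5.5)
The plan is to bound each factor in the product formula \eqref{eq: describe_c_k}, $c_k=\prod_{j=1}^{k-1}G'(F_j(1))$, from Proposition~\ref{lem: c_k}, uniformly in $j$.

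First, $G'$ is non-decreasing on $[0,1]$, because its Taylor coefficients $jp_j$ are non-negative. Next, $F_j(1)=\P(\partial(\cT)\ge j)\le 1$ for every $j\ge 1$, so $G'(F_j(1))\le G'(1)=m$.

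In type~II we have $m<1$, so each factor is at most $m$. The product of $k-1$ such factors is then at most $m^{k-1}$, which is the stated bound. Taking $C=1/m$ and $c=-\log m>0$ gives \eqref{eq: exponential_c_k}.

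In type~I we have $m=1$, so the bound $G'\le m$ is useless and we need a uniform gap. For $j\ge 1$ we get
\[
F_j(1)\le F_1(1)=G(1)-p_0=1-p_0 .
\]
This uses $F_j(1)=\P(\partial(\cT)\ge j)$, which is non-increasing in $j$, and $\P(\partial(\cT)\ge 1)=1-p_0$. By monotonicity of $G'$, every factor is then at most $G'(1-p_0)$. It remains to show $G'(1-p_0)\le 1-p_0^2$.

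I would prove this by convexity of $G'$ on $[0,1]$ (its coefficients are non-negative), which gives
\[
G'(1-p_0)\le p_0\,G'(0)+(1-p_0)\,G'(1)=p_0p_1+1-p_0 .
\]
Since $p_1\le 1-p_0$, this is at most $1-p_0+p_0(1-p_0)=1-p_0^2$. Note that $p_0>0$ in type~I, since $\omega_0>0$, so $1-p_0^2<1$. This yields $c_k\le(1-p_0^2)^{k-1}$, and hence \eqref{eq: exponential_c_k} with $c=-\log(1-p_0^2)$ and $C=(1-p_0^2)^{-1}$.

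The only delicate point is this type~I gap estimate. If convexity gave a weaker constant, I would fall back on the cruder but still sufficient bound $G'(1-p_0)<G'(1)=1$. That inequality is strict because $p_k>0$ for some $k\ge2$, so $G'$ is strictly increasing, and it would also give exponential decay.
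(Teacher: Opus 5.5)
Your proposal is correct and follows essentially the same route as the paper. The type~II argument via $G'(F_j(1))\le G'(1)=m$ is identical. In type~I, your convexity chord $G'(s)\le (1-s)G'(0)+sG'(1)=p_1+s(1-p_1)$ is exactly the paper's linear estimate, which the paper derives from $s^{j-1}\le s$ for $j\ge 2$ together with $\sum_{j\ge2}jp_j=1-p_1$. Your bound $F_j(1)\le F_1(1)=1-p_0$ replaces the paper's $F_j(1)\le(1-p_0)^j$, and both lead to the same intermediate bound $1-p_0+p_0p_1\le 1-p_0^2$.
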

\begin{proof}
	In type~II the function $G'$ is nondecreasing on $[0,1]$, so $G'(F_j(1)) \leq G'(1) = m < 1$, and \eqref{eq: describe_c_k} gives $c_k \leq m^{\,k-1}$.
	\smallskip
	
	In type~I we argue more carefully. First,
	\begin{align}\label{eq: inequality_F}
		F(s) = G(s) - p_0 = \sum_{j=1}^{\infty} p_j s^j \leq s(1 - p_0)\,, \qquad 0 \leq s \leq 1\,,
	\end{align}
	so $F_j(1) \leq (1 - p_0)^{j}$ for every $j \geq 1$. Next, since the offspring distribution is critical,
	\begin{align*}
		G'(s) = \sum_{m \geq 1} m p_m s^{m-1} \leq p_1 + s(1 - p_1)\,, \qquad 0 \leq s \leq 1\,,
	\end{align*}
	and combining both bounds,
	\begin{align*}
		G'(F_j(1)) \leq p_1 + (1 - p_0)^{j}(1 - p_1) \leq p_1 + (1 - p_0)(1 - p_1) \leq (1 - p_0)(1 + p_0)\,,
	\end{align*}
	for $j \in \{1, \dots, k-1\}$, whence $c_k = \prod_{j=1}^{k-1} G'(F_j(1)) \leq (1 - p_0^2)^{k-1}$. Either bound proves \eqref{eq: exponential_c_k}. In the special case $p_1 = 0$ one even has $c_k \leq (1 - p_0)^{(k-1)k/2}$; this is sharper but not always representative. For instance, in the binary case $p_0 = p_2 = 1/2$ one has $c_k = 2^{-2^k + k + 1}$, and the bound $2^{-k^2/2 + k/2}$ is not tight for large $k$.
\end{proof}

The tail $c_k$ has been studied in the literature; see, for example, \cite{Protection1-mean}, \cite{Protection-Clemens} and \cite{Macia2022}, where it is obtained as a limit of the uniform probability over simply generated trees with $\partial \geq k$ and $n$ nodes, via generating functions, power series and asymptotic analysis. The cases $k = 2$ and $k = 3$ for rooted labelled Cayley trees appear in \cite{Ara-Fer}. The proofs given here are elementary and offer a clear probabilistic interpretation of these quantities.
\smallskip

We summarise the moments of $\partial(\hat{\cT})$ in terms of the tail $c_k$.
\begin{propo}\label{propo: moments_kesten_distance_finite}
	Let $\cT_n$ be a sequence of size-conditioned simply generated trees. Then, for every $p>0$,
	\[
	\E\bigl[\partial(\hat{\cT})^{p}\bigr] = \sum_{k=1}^{\infty} \bigl(k^{p} - (k-1)^{p}\bigr)\, c_k\,.
	\]
	In types~I and~II the series is finite by Lemma~\ref{lem: exponential_c_k}; in type~III we have $\partial(\hat{\cT}) = 1$ almost surely and therefore $\E\bigl[\partial(\hat{\cT})^{p}\bigr] = 1$.
\end{propo}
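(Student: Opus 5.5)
The identity is the layer-cake (tail-sum) formula for a nonnegative integer-valued random variable, so the plan is short. Write $X:=\partial(\hat{\cT})$. In type~I the tree $\hat{\cT}$ is locally finite, and each normal subtree is a critical BGW tree with $p_0>0$, so it is almost surely finite and has leaves; hence $X<\infty$ almost surely. In type~II, every special vertex off the top of the spine has normal children, and the top vertex has infinitely many children, almost surely including a leaf, so again $X<\infty$ almost surely. Thus $X$ takes values in $\{1,2,\dots\}$; the root is special and never a leaf, so $c_1=1$.

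For $p>0$ and an integer $X\ge 1$, I would telescope:
\[
X^{p}=\sum_{k=1}^{X}\bigl(k^{p}-(k-1)^{p}\bigr)=\sum_{k=1}^{\infty}\bigl(k^{p}-(k-1)^{p}\bigr)\mathbf 1_{\{X\ge k\}}.
\]
Each summand is nonnegative because $k\mapsto k^{p}$ is increasing for $p>0$. Taking expectations and applying Tonelli's theorem to exchange sum and expectation gives
\[
\E[X^{p}]=\sum_{k\ge1}\bigl(k^{p}-(k-1)^{p}\bigr)\P(X\ge k)=\sum_{k\ge1}\bigl(k^{p}-(k-1)^{p}\bigr)c_k,
\]
with both sides possibly infinite a priori.

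Finiteness in types~I and~II follows from Lemma~\ref{lem: exponential_c_k}. For $p\ge1$ the mean value theorem gives $k^{p}-(k-1)^{p}\le pk^{p-1}$, and for $0<p<1$ the difference is at most $1$. Either way the terms are bounded by a polynomial in $k$ times $Ce^{-ck}$, so the series converges.

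In type~III, Proposition~\ref{lem: c_k} gives $c_1=1$ and $c_k=0$ for $k\ge2$. The series then reduces to $1^{p}-0^{p}=1$, which agrees with $X=1$ almost surely.

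The only point needing care is the almost sure finiteness of $X$ used in the telescoping. Even this is harmless: the monotone identity gives $\E[X^{p}]\ge$ the partial sums, and $\P(X=\infty)=\lim_k c_k=0$ by the exponential bound. So there is no real obstacle.
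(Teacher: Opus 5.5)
Your proof is correct and follows the paper's argument: the tail-sum (layer-cake) identity, convergence of the series from the exponential bound of Lemma~\ref{lem: exponential_c_k} in types~I and~II, and the degenerate computation $c_1=1$, $c_k=0$ for $k\ge2$ in type~III. You supply details the paper leaves implicit (the telescoping, Tonelli, and the bound on $k^{p}-(k-1)^{p}$). One harmless slip: in type~II a special vertex with $\hat{\xi}=1$ has no normal children, but your finiteness argument only needs a leaf among the infinitely many children of the top spine vertex, and your closing observation $\P(X=\infty)=\lim_k c_k=0$ makes even that unnecessary.
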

\begin{proof}
	The identity is the standard tail-sum expression for the $\beta$-moment of a nonnegative integer random variable. In types~I and~II the exponential bound of Lemma~\ref{lem: exponential_c_k} makes the series converge; in type~III one has $\partial(\hat{\cT}) = 1$ almost surely, so $c_1 = 1$ and $c_k = 0$ for $k \geq 2$.
\end{proof}

\begin{remark}
	In the critical case ($m = 1$, type~I) the identity given by \eqref{eq: closed_c_k} reads $\P(\partial(\hat{\cT}) \geq k) = F_k'(1)$, and admits a transparent interpretation. Writing $Z_k(\cT)$ for the number of vertices at level $k$ of $\cT$,
	\begin{align*}
		F_k'(1) = \sum_{j \geq 1} j\, \textsc{coeff}_j[F_k(z)]
		&= \sum_{j \geq 1} j\, \P\bigl(Z_k(\cT) = j,\ \partial(\cT) \geq k\bigr)
		\\ &= \E\bigl(Z_k(\cT) \mid \partial(\cT) \geq k\bigr)\, \P(\partial(\cT) \geq k)\,,
	\end{align*}
	and therefore
	\begin{align*}
		\E\bigl(Z_k(\cT) \mid \partial(\cT) \geq k\bigr) = \frac{\P(\partial(\hat{\cT}) \geq k)}{\P(\partial(\cT) \geq k)} = \frac{F_k'(1)}{F_k(1)}\,, \qquad k \geq 0\,,
	\end{align*}
	the logarithmic derivative of $F_k$ at $z = 1$.
	\finremark
\end{remark}

\subsection{Applications}

In this final section, we compute some of these limit constants for several specific cases. In particular, we consider the critical offspring distributions given by Poisson, Geometric, and ${0,2}$-Bernoulli random variables. In the combinatorial setting, these distributions correspond to the uniform rooted labeled Cayley trees, uniform rooted plane trees, and uniform rooted binary plane trees, respectively.
\medskip

Observe that the proof of Proposition \ref{lem: c_k} implicitly establishes the recurrence relation
\begin{equation}
	c_k = G^{\prime}(F_{k-1}(1)) c_{k-1}, \quad \text{for any } k \geq 1,
\end{equation}
with the initial condition $c_0 = 1$.
\medskip

\paragraph{\textbf{Offspring distribution Poisson(1):}} In this case, we have $G(z) = e^{z-1}$, for $|z|\leq1$, which is the probability generating function (PGF) of a Poisson random variable with parameter $\lambda = 1$. Additionally, we have $F(z) = \frac{1}{e} (e^z - 1)$. The recurrence relation in this setting is given by:
\begin{equation}\label{eq: recurrence_cayley}
	c_k = \exp(F_{k-1}(1) - 1) c_{k-1}, \quad \text{ for any } k \geq 1,
\end{equation}
with the initial condition $c_0 = 1$. This recurrence relation also appears in \cite{Macia2022} and \cite{Protection1-mean} in the context of simply generated trees.
\smallskip

We compute the probability $c_k$ for the cases $k = 2, 3, 4$ using Proposition \ref{lem: c_k}. Observe that
\begin{equation}
	c_2 = \P(\partial(\hat{\cT}) \geq 2) = e^{-1/e} \approx 0.6922.
\end{equation}

Using the probability $c_2$ and the recurrence relation \eqref{eq: recurrence_cayley}, we find that
\begin{equation}
	c_3 = \P(\partial(\hat{\cT}) \geq 3) =\frac{1}{e}e^{-1/e}e^{(e^{1-1/e}-1)/e} \approx 0.3522.
\end{equation}
These two constants appear explicitly in \cite{Ara-Fer}[pp. 309-312]. See also \cite{Macia2022} and \cite{Protection1-mean}. To conclude, we compute $c_4$ as follows:

\begin{equation}
	c_4 = \P(\partial(\hat{\cT}) \geq 4) = \frac{1}{e}\exp\left(\frac{1}{e}(e^{(e^{1-1/e}-1)/e}-1)\right)\frac{1}{e}e^{-1/e}e^{(e^{1-1/e}-1)/e}\approx 0.1492.
\end{equation}
This constant also appears in \cite{Macia2022}.
\smallskip

Furthermore, we have approximated some of the moments of the random variable $\partial(\hat{\cT})$ in the $Poisson(1)$ setting using a numerical program based on the recurrence relation \eqref{eq: recurrence_cayley}. We approximate the first three moments as follows:
\begin{align*}
	\E(\partial(\hat{\cT})) \approx 2.28619\,, \quad \E(\partial(\hat{\cT})^2) \approx 6.82517\,, \quad \E(\partial(\hat{\cT})^3) \approx 25.54536\,.
\end{align*}
Therefore, the variance is approximately $\V\mathrm{ar}(\partial(\hat{\cT})) \approx 1.5984$, the previous approximation for the expectation and variance appear in \cite{Protection1-mean}. Using these ideas we can numerically approximate any moment of $\partial(\hat{\cT})$.
\smallskip

\paragraph{\textbf{Offspring distribution Geom(1/2):}} In this case, we have $G(z) = {1}/{(2-z)}$, which is the PGF of a geometric random variable of parameter $p = 1/2$. We also have $F(z) = G(z)-1/2 ={z}/{(4 - 2z)}$, for any $|z|\leq1$. For any $j \geq 0$, we can prove, by induction, that
\begin{equation}
	F_j(z) = \frac{z}{4^j - \frac{8\cdot 4^{j-1}-2}{3}z}\,,
\end{equation}
evaluating $F_j(z)$ at $z = 1$, the expression simplifies to
\begin{equation}
	F_j(1) = \frac{3}{4^j + 2}\,,
\end{equation}
and using the recurrence relation \eqref{eq: recurrence} we find that 
\begin{align*}
	c_k = G^{\prime}(F_{k-1}(1))c_{k-1}\,, \quad \text{ for any } k\geq2\,,
\end{align*}
with $c_0 = 1$. Using the previous recurrence relation we finally obtain that 
\begin{align*}
	c_k = \prod_{j = 1}^{k}\left(\frac{4^{j-1}+2}{2\cdot 4^{j-1}+1}\right)^2\,.
\end{align*}
By working with this product, we obtain the closed form expression
\begin{align*}
	c_k = \P(\partial(\hat{\cT})\geq k) = \left(\frac{3\cdot 2^{k-1}}{2^{2k-1}+1}\right)^2 = \frac{9\cdot 4^k}{(4^k+2)^2}\,, \quad \text{ for any } k\geq 0\,,
\end{align*}
which appears explicitly in \cite{Macia2022}, \cite{Protection-Clemens} and \cite{Protection1-mean}.
\smallskip

The first three (non-trivial) values of $c_k$ are given by 
\begin{align*}
	c_2 = 4/9 \approx 0.44444\,, \quad c_3 = (4/11)^2 \approx 0.13223\,, \quad c_4 = 64/1849 \approx 0.03461\,.
\end{align*}

Using the previous formula for $c_k$ we can estimate the integer moments of $\partial(\hat{\cT})$. For the first, second and third moments of $\partial(\hat{\cT})$ we have:
\begin{align*}
	\E(\partial(\hat{\cT})) &= \sum_{k=1}^{\infty}c_k = 9\sum_{k=1}^{\infty}\frac{4^k}{(4^k+2)^2} \approx 1.62297\,, \\
	\E(\partial(\hat{\cT})^2) &=  \sum_{k=1}^{\infty}(k^2-(k-1)^2)c_k = 9\sum_{k=1}^{\infty}(2k-1)\frac{4^k}{(4^k+2)^2} \approx 3.34973\,,\\
	\E(\partial(\hat{\cT})^3) &= \sum_{k=1}^{\infty}(k^3-(k-1)^3)c_k = 9\sum_{k=1}^{\infty}(3k^2-3k+1)\frac{4^k}{(4^k+2)^2} \approx 8.74174\,,
\end{align*}
and the variance is approximately $\V(\partial(\hat{\cT})) \approx 0.71570$. These quantities appear in \cite{Protection-Clemens} and \cite{Protection1-mean}.
\smallskip

\paragraph{\textbf{Offspring distribution $2\cdot$Ber(1/2):}} In this case, we have that $G(z) = {(1+z^2)}/{2}$, for any $|z|\leq 1$; this is the PGF of a ${0,2}$-Bernoulli random variable with parameter $p = 1/2$. We also have that $F(z) = {z^2}/{2}$, for any $|z|\leq1$. Using the recurrence relation \eqref{eq: recurrence}, we find that
\begin{equation}
	c_k = {c_{k-1}}/{2^{2^{k-1}-1}}, \quad \text{for any } k \geq 1\,,
\end{equation}
where $c_0 = 1$. Iterating this relation, we find that
\begin{equation}
	c_k = \frac{1}{2^{2^k-k-1}}=2^{k+1-2^k}, \quad \text{for any } k \geq 0\,,
\end{equation}
therefore, for any integer $m \geq 1$, we have
\begin{equation}
	\E(\partial(\hat{\cT})^m) = \sum_{k=1}^{\infty} (k^m - (k-1)^m) 2^{k+1-2^k}\,.
\end{equation}

Numerically, we obtain the following approximations for the first three moments:
\begin{equation}
	\E(\partial(\hat{\cT})) \approx 1.56299, \quad	\E(\partial(\hat{\cT})^2) \approx 2.81592, \quad 	\E(\partial(\hat{\cT})^3) \approx 5.70557.
\end{equation}
and the variance $\V\mathrm{ar}(\partial(\hat{\cT})) \approx 0.37298$. Some of these quantities appear in \cite{Protection1-mean}. 

\bigskip

\section{The protection number of a random vertex}\label{sec:protection_random_vertex}

In this section we study the protection number $\partial_{\mathrm{p}}(\cT_n,v_n)$ of a
uniformly chosen vertex $v_n$ of a size-conditioned simply generated tree, that is, the
distance to the border within the fringe subtree rooted at $v_n$, in the three regimes of
Section~\ref{subsec:types}. After fixing the topological setting on the space of pointed
plane trees, we observe that $\partial_{\mathrm{p}}(\cT_n,v_n)$ depends on $\cT_n$ only
through the fringe subtree at $v_n$, which converges in distribution to the
Bienaym\'e--Galton--Watson tree $\cT$; hence
$\partial_{\mathrm{p}}(\cT_n,v_n)\xrightarrow{d}\partial(\cT)$ in all three types,
uniformly and with no extra assumption on the weight sequence. We then prove a universal
exponential tail bound for $\partial_{\mathrm{p}}(\cT_n,v_n)$, adapting the depth-first
encoding of Section~\ref{sec: distance_borde_UI} to the random base point, from which
uniform integrability and the convergence of all moments of
$\partial_{\mathrm{p}}(\cT_n,v_n)$ to those of $\partial(\cT)$ follow.

\subsection{Topology}
Throughout, $v_n$ denotes a uniformly chosen vertex of $\cT_n$ (conditionally on
$\cT_n$), and we work on the space $\T_{\infty}^{\bullet}$ of \emph{pointed plane
trees}, that is, pairs $(T,v)$ with $T\in\T_{\infty}$ and a distinguished vertex
$v\in T$, endowed with the local topology obtained by re-rooting each tree at its
distinguished vertex; with this topology $\T_{\infty}^{\bullet}$ is a Polish space
(see~\cite{zbMATH07039768}). The underlying tree spaces $(\T,\delta)$ and
$(\T_{\infty},\delta_{\infty})$ are those introduced earlier.
\smallskip

For a pointed tree $(T,v)$, the \emph{fringe subtree at $v$}, denoted
$\mathrm{fringe}_v(T)$, is the subtree consisting of $v$ and all its descendants,
rooted at $v$. We recall here that the \emph{protection number} of $v$ is the distance to the border of
this fringe subtree,
\[
\partial_{\mathrm{p}}(T,v):=\partial\bigl(\mathrm{fringe}_v(T)\bigr)
=\min_{u\in\mathrm{leaves}(\mathrm{fringe}_v(T))} d(v,u).
\]
Since the leaves of $\mathrm{fringe}_v(T)$ are exactly the leaves of $T$ that are
descendants of $v$, the protection number dominates the distance to the nearest
leaf:
\begin{equation}\label{eq:partial_pV_dominates_partialV}
\partial(T,v)\le\partial_{\mathrm{p}}(T,v),
\qquad\text{where}\qquad
\partial(T,v):=\min_{u\in\mathrm{leaves}(T)} d(v,u).
\end{equation}
The map $(T,v)\mapsto\mathrm{fringe}_v(T)$ is continuous from
$\T_{\infty}^{\bullet}$ to $(\T_{\infty},\delta_{\infty})$ and $\partial$ is Borel
on $\T_{\infty}$; hence $(T,v)\mapsto\partial_{\mathrm{p}}(T,v)$ is Borel measurable.

\subsection{Convergence in distribution}\label{sec:conv_dist_protection}
In this subsection we prove that the protection number $\partial_{\mathrm{p}}(\cT_n,v_n)$
converges in distribution, for weight sequences of types~I, II and~III, to $\partial(\cT)$,
the distance to the border of the Bienaym\'e--Galton--Watson tree $\cT$. The protection
number depends on $\cT_n$ only through the fringe subtree at $v_n$, whose distributional
limit is $\cT$; the convergence is therefore uniform in the three regimes and requires no
assumption on the weight sequence. 

\subsubsection{The limiting objects}
The protection number $\partial_{\mathrm{p}}(\cT_n,v_n)=\partial(\mathrm{fringe}_{v_n}(\cT_n))$
depends on $\cT_n$ only through the fringe subtree at $v_n$. By the fringe-subtree
limit theorem for size-conditioned simply generated trees, the fringe subtree at a
uniformly chosen vertex converges in distribution to the Bienaym\'e--Galton--Watson
tree $\cT$ with offspring distribution $\xi$ (Devroye and
Janson~\cite[Thm.~1.1, Thm.~3.1]{zbMATH06346951}; see also
Janson~\cite[Thm.~7.12]{Janson2012}). Consequently
$\partial_{\mathrm{p}}(\cT_n,v_n)\xrightarrow{d}\partial(\cT)$, and, with the notation
of Section~\ref{sec:local_limits}, $\P(\partial(\cT)\ge k)=F_k(1)$.

\subsubsection{Critical case (type~I)}
\begin{propo}\label{propo:conv_dist_protection_critical}
Let $(\cT_n)_{n\geq1}$ be a sequence of size-conditioned simply generated trees of type~I. Then,
\[
\partial_{\mathrm{p}}(\cT_n,v_n)\xrightarrow{d}\partial(\cT).
\]
Equivalently, for every $k\ge 1$,
\[
\lim_{n\to\infty}\P\bigl(\partial_{\mathrm{p}}(\cT_n,v_n)\ge k\bigr)
=\P\bigl(\partial(\cT)\ge k\bigr)=F_k(1).
\]
\end{propo}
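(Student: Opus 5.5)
The plan is to combine the fringe-subtree limit theorem with a continuity argument in the locally finite space $(\T,\delta)$, mirroring the type~I case of Theorem~\ref{theo: conv_dist_partial}. In type~I the limit tree $\cT$ is a critical Bienaym\'e--Galton--Watson tree with finite offspring. It is therefore almost surely finite and locally finite, so no vertex of infinite degree can create the discontinuities described in Proposition~\ref{Propo: discontinuity_set_delta_condensation}.

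\emph{Step 1.} Invoke the fringe-subtree limit of Devroye and Janson~\cite[Thm.~1.1]{zbMATH06346951} (see also \cite[Thm.~7.12]{Janson2012}). It states that $\mathrm{fringe}_{v_n}(\cT_n)\xrightarrow{d}\cT$; concretely, $\P(\mathrm{fringe}_{v_n}(\cT_n)=T)\to\P(\cT=T)$ for every finite plane tree $T$. Since $\cT$ is critical, $\sum_{T\in\T_0}\P(\cT=T)=1$. By Scheff\'e's lemma applied to the discrete space $\T_0$, the pointwise convergence upgrades to convergence in total variation. This already gives, for every Borel functional $f$ on finite trees, that $f(\mathrm{fringe}_{v_n}(\cT_n))\xrightarrow{d}f(\cT)$.

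\emph{Step 2.} Alternatively, to stay within the topological framework of the paper, view the fringe subtrees as random elements of $(\T,\delta)$. Local convergence of $\mathrm{fringe}_{v_n}(\cT_n)$ to $\cT$ then holds. By Lemma~\ref{lem: clopen}, each set $\T_k=\{\partial\ge k\}$ is clopen, so its boundary is empty and it is a continuity set for the law of $\cT$. The portmanteau theorem then gives
\[
\P\bigl(\partial_{\mathrm{p}}(\cT_n,v_n)\ge k\bigr)=\P\bigl(\mathrm{fringe}_{v_n}(\cT_n)\in\T_k\bigr)\to\P(\cT\in\T_k)=\P(\partial(\cT)\ge k).
\]
Because $\partial$ takes values in the nonnegative integers, convergence of these tails for every $k$ is equivalent to convergence in distribution.

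\emph{Step 3.} Identify the limit $\P(\partial(\cT)\ge k)=F_k(1)$ by induction on $k$. The base case is $F_0(1)=1$. For the induction step, $\partial(\cT)\ge k$ holds iff the root has $j\ge1$ children and each child's subtree has $\partial\ge k-1$. Conditioning on the root's offspring count gives
\[
\P(\partial(\cT)\ge k)=\sum_{j\ge1}p_j F_{k-1}(1)^j=F(F_{k-1}(1))=F_k(1).
\]

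The only delicate point is Step~1: the cited theorem must be stated in a form (annealed, uniform vertex) that yields local or pointwise convergence of the fringe law. I would check \cite[Thm.~7.12]{Janson2012}, which gives exactly this annealed statement. The remaining steps are routine.
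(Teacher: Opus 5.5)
Your proposal is correct and is essentially the paper's argument: the fringe-subtree limit theorem of Devroye--Janson (or Janson's Thm.~7.12), then the fact that $\{\partial\ge k\}$ is clopen in $(\T,\delta)$ (Lemma~\ref{lem: clopen}) and hence a continuity set, then the portmanteau theorem. Your Scheff\'e/total-variation shortcut in Step~1 is a valid alternative, and so is the inductive check $\P(\partial(\cT)\ge k)=F(F_{k-1}(1))=F_k(1)$ in Step~3, which the paper leaves implicit.
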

\begin{proof}
The protection number depends on $\cT_n$ only through the fringe subtree at $v_n$, which
converges in distribution to the Bienaym\'e--Galton--Watson tree $\cT$ by the fringe-subtree
limit theorem (Devroye--Janson~\cite[Thm.~1.1, Thm.~3.1]{zbMATH06346951}; see also
Janson~\cite[Thm.~7.12]{Janson2012}). For each fixed $k\ge1$ the event $\{\partial\ge k\}$
depends only on the restriction to level $k$ and is therefore a continuity set for the
limiting law (Lemma~\ref{lem: clopen}); hence
$\P(\partial_{\mathrm{p}}(\cT_n,v_n)\ge k)\to\P(\partial(\cT)\ge k)=F_k(1)$, which is the
asserted convergence in distribution.
\end{proof}

\subsubsection{Condensation case (types~II and~III)}
\begin{propo}\label{propo:conv_dist_protection_subcritical}
Let $(\cT_n)_{n\geq1}$ be a sequence of size-conditioned simply generated trees of type~II or~III. Then,
\[
\partial_{\mathrm{p}}(\cT_n,v_n)\xrightarrow{d}\partial(\cT).
\]
Equivalently, for every $k\ge 1$,
\[
\lim_{n\to\infty}\P\bigl(\partial_{\mathrm{p}}(\cT_n,v_n)\ge k\bigr)
=\P\bigl(\partial(\cT)\ge k\bigr)=F_k(1).
\]
\end{propo}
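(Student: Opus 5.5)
The plan is to mirror the proof of Proposition~\ref{propo:conv_dist_protection_critical}. The only type-dependent input is the fringe-subtree limit theorem, which in types~II and~III must be read in the condensation setting. The fringe subtree at $v_n$ is always finite, so $\mathrm{fringe}_{v_n}(\cT_n)$ is a random element of $\T_0\subset\T$. We invoke Janson~\cite[Thm.~7.12]{Janson2012} (see also Devroye--Janson~\cite[Thm.~3.1]{zbMATH06346951}). It states, in full generality and so in particular for types~II and~III, that for every fixed finite plane tree $t$,
\[
\P\bigl(\mathrm{fringe}_{v_n}(\cT_n)=t\bigr)\to\P(\cT=t),
\]
where $\cT$ is the BGW tree with the offspring law $\xi$ of Section~\ref{subsec:types}. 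In type~II, $\xi=X_R$ is subcritical, so $\cT$ is a.s.\ finite and these limits sum to one. In type~III, $\xi=0$ and $\cT$ is a single vertex.

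Since the limit law puts full mass on finite trees, pointwise convergence of the probabilities of atoms in the countable set $\T_0$ already gives convergence in total variation on $\T_0$ (Scheffé), hence weak convergence in $(\T,\delta)$. By Lemma~\ref{lem: clopen}, $\{\partial\ge k\}$ is clopen in $(\T,\delta)$, so it is a continuity set. Therefore
\[
\P\bigl(\partial_{\mathrm{p}}(\cT_n,v_n)\ge k\bigr)=\P\bigl(\mathrm{fringe}_{v_n}(\cT_n)\in\T_k\bigr)\to\P(\cT\in\T_k)=F_k(1).
\]
The identity $\P(\partial(\cT)\ge k)=F_k(1)$ follows from the branching property. $\{\partial(\cT)\ge k\}$ requires the root to have $j\ge1$ children, each with $\partial\ge k-1$, which gives $F(F_{k-1}(1))$ by induction on $k$.

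For type~III the limit is the constant $\partial(\cT)=0$. This is consistent with the formula $F_k(1)=0$ for $k\ge1$, since $F\equiv 0$ when $p_0=1$. It says that a uniform vertex is a leaf with probability tending to one, i.e.\ $L/n\to1$. One can check this directly, without the fringe theorem, via Janson's leaf concentration~\cite[Thm.~11.4]{Janson2012} with $p_0=1$. Indeed $\P(\partial_{\mathrm{p}}(\cT_n,v_n)\ge1)=\E[1-L/n]\to0$.

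The main obstacle is checking that the cited fringe theorem really covers types~II and~III without extra hypotheses, since the condensation vertex makes $\cT_n$ non-BGW-like near the hub. The point to verify is that a uniform vertex lies, with probability tending to one, outside the ancestral line of the hub in a bounded neighbourhood. This is exactly what the annealed statement in~\cite{Janson2012} provides. If in doubt, I would fall back on the leaf/degree-count concentration route: compute $\P(\mathrm{fringe}_{v_n}=t)$ via the cycle-lemma representation of Lemmas~\ref{lem: exch_repr}--\ref{lem: prefix_sampling}, as a rescaled count of occurrences of the DFS pattern of $t$, which is controlled by the concentration of degree proportions $n_{\cT_n}(j)/n\to p_j$.
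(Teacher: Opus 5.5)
Your proposal is correct and follows the paper's route. The fringe subtree at $v_n$ converges in distribution to $\cT$ by the fringe-subtree limit theorem of Janson and Devroye--Janson, which holds for every type, and $\{\partial\ge k\}$ is a continuity set by Lemma~\ref{lem: clopen}. Your additions are all correct and only make explicit what the paper's one-line proof leaves implicit: the Scheff\'e upgrade to total-variation convergence on the countable set $\T_0$ (which in fact makes the continuity-set step unnecessary), the branching-property derivation of $\P(\partial(\cT)\ge k)=F_k(1)$, and the direct leaf-concentration check $\E[1-L/n]\to0$ in type~III.
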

\begin{proof}
The argument is identical to that of Proposition~\ref{propo:conv_dist_protection_critical}:
the fringe-subtree limit theorem holds for weight sequences of every type, so the fringe
subtree at $v_n$ converges in distribution to $\cT$ (a single vertex in type~III), and the
same continuity-set argument gives
$\P(\partial_{\mathrm{p}}(\cT_n,v_n)\ge k)\to\P(\partial(\cT)\ge k)=F_k(1)$.
\end{proof}

\subsection{Exponential tail bound and uniform integrability}\label{sec:ui_protection}
We now establish a universal exponential tail bound for $\partial_{\mathrm{p}}(\cT_n,v_n)$,
paralleling Proposition~\ref{lem: sub_exp_critical} for the distance from the root.
The proof adapts the strategy of Section~\ref{sec: distance_borde_UI}: we encode the
random choice of $v_n$ as a uniform DFS-index $I$ and use the deterministic observation
that, if $\partial_{\mathrm{p}}(\cT_n,v_n)\ge k$, then the chain of first descendants of
$v_n$ produces $k-1$ non-leaf vertices appearing consecutively in DFS order, starting at
position $I$.

\begin{propo}\label{prop:protection_uniform_vertex_direct}
Let $(\cT_n)_{n\ge 1}$ be a sequence of size-conditioned simply generated trees of
type~I, II or~III. Then there exist constants $n_0\ge 1$ and $C,c>0$, depending only on
the weight sequence, such that
\[
\P\bigl(\partial_{\mathrm{p}}(\cT_n,v_n)\ge k\bigr)\le C\,e^{-ck},\qquad\text{for any }n\ge n_0\text{ and any }k\ge 0.
\]
\end{propo}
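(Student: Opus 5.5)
We follow the route announced before the statement and reuse the machinery of Section~\ref{sec: distance_borde_UI}. Encode $v_n$ as its DFS index $I$, uniform on $\{1,\dots,n\}$ and independent of $\cT_n$. The first child of the vertex at DFS position $i$ is the vertex at position $i+1$. Hence if $\partial_{\mathrm p}(\cT_n,v_n)\ge k$, then the chain $v_n$, its first child, the first child of that, and so on, consists of $k$ non-leaf vertices occupying DFS positions $I,I+1,\dots,I+k-1$. This gives the deterministic inclusion
\[
\{\partial_{\mathrm p}(\cT_n,v_n)\ge k\}\subseteq\{I\le n-k+1,\ d^{\mathrm{dfs}}_{I}>0,\dots,d^{\mathrm{dfs}}_{I+k-1}>0\}.
\]
It therefore suffices to bound, for a fixed window start $i$, the conditional probability given $\mathcal G_n$ that $k$ consecutive DFS entries are all positive, uniformly in $i$.

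For this we use the representation of Lemma~\ref{lem: exch_repr}: the DFS sequence is $\tilde d^{(U)}$ conditioned on $\mathcal E_n$, and $\P(\mathcal E_n\mid\mathcal G_n)=1/n$. It is convenient to average over the cyclic position of the window. Let $B_i$ be the event that the entries $\tilde d^{(U)}_i,\dots,\tilde d^{(U)}_{i+k-1}$ are all positive. Then
\[
\P(B_i\mid\mathcal E_n,\mathcal G_n)=n\,\P(B_i\cap\mathcal E_n\mid\mathcal G_n),
\]
and averaging over $i$ gives
\[
\P\bigl(\partial_{\mathrm p}\ge k\mid\mathcal G_n\bigr)\le \frac1n\sum_{i} n\,\P(B_i\cap\mathcal E_n\mid\mathcal G_n)=\sum_i\P(B_i\cap\mathcal E_n\mid\mathcal G_n).
\]
Now re-rotate each term so that the window sits at the front. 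Since $\tilde d$ is uniform on $\Sigma_M$, the event $B_i\cap\mathcal E_n$ corresponds to the event that a uniform ordering has its first $k$ entries positive and that its rotation by $i-1$ positions is \L{}ukasiewicz. Summing over $i$ counts, for a uniform ordering $\sigma$ with $\sigma_1,\dots,\sigma_k>0$, how many rotations are \L{}ukasiewicz. By the cycle lemma (freeness, as in Lemma~\ref{lem: exch_repr}(ii)) this number is exactly one for every $\sigma$. Hence
\[
\P\bigl(\partial_{\mathrm p}(\cT_n,v_n)\ge k\mid\mathcal G_n\bigr)\le \P(A_k\mid\mathcal G_n)=\frac{(n-L)_k}{(n)_k}\le(1-\eta)^k.
\]
This is even cleaner than Lemma~\ref{lem: cond_bound}: the factor $(H+1)$ disappears because the base point is uniform. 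The one point to check is the boundary: windows with $I>n-k+1$ are excluded by the inclusion above. Wrap-around windows therefore never arise, and the sum over $i$ is at most the full cyclic sum.

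Finally, take expectations exactly as in the proof of Proposition~\ref{lem: sub_exp_critical}. Fix $c_0\in(0,p_0)$. On $\{\eta\ge c_0\}$ the bound is $(1-c_0)^k$. By Janson's leaf concentration, $\P(\eta<c_0)\le e^{-an}$ for $n\ge n_0$, and on that event we use the trivial bound $1$ (the event is empty for $k>n$). Since $k\le n$ whenever the probability is nonzero, $e^{-an}\le e^{-ak}$, which yields $C e^{-ck}$.

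For type~III, $p_0=1$ and Janson's concentration gives $L/n\to1$ exponentially. Any $c_0\in(0,1)$ then works, and the argument is unchanged. The main obstacle is the rotation-averaging step: one must make sure that the window constraint, combined with the uniform $I$, really reduces to a single Łukasiewicz rotation per cyclic class. If that bookkeeping fails, the fallback is the conditional cycle-lemma bound of Lemma~\ref{lem: cond_cycle}, adapted to a window in the interior. That bound is $\frac{H+1}{n-k}$ up to constants and reproduces the $k(1-\eta)^{k-1}$ form, which is still exponentially small.
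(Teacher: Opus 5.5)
Your proof is correct and follows essentially the same route as the paper. The chain of first descendants gives a window of consecutive positive DFS entries starting at the uniform index $I$. Rotating the uniform \L{}ukasiewicz sequence by that uniform index gives a uniform element of $\Sigma_M$: your rotation-count summation over $\mathcal E_n$ re-derives this fact, which the paper obtains directly from the unique decomposition $\sigma=\rho^{j-1}\tau$ with $\tau\in\Sigma_M^{\mathrm{Luk}}$. The resulting bound $(n-L)_k/(n)_k\le(1-\eta)^k$ is then combined with Janson's leaf concentration, exactly as in the paper. Your window of length $k$ (the paper uses $k-1$) is also valid, since $u_{k-1}$ lies at distance $k-1<k$ from $v_n$ in the fringe and so cannot be a leaf; hence the fallback you mention is not needed.
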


\begin{proof}
\emph{type~III.} If the weight sequence has type~III, then $\xi=0$ almost surely and $p_0=1$; a uniformly chosen vertex is a leaf with probability $\eta=L/n\to1$, so $\partial_{\mathrm{p}}(\cT_n,v_n)$ has a degenerate limit and the asserted bound holds trivially. We assume henceforth that the weight sequence has type~I or~II.
\smallskip

The case $k=0$ is trivial (take $C\geq 1$). Fix $k\ge 1$ throughout, and set $m:=k-1$.
\medskip

Run a depth-first traversal of $\cT_n$ and write $D=(d_1,\dots,d_n)$ for its outdegree sequence in DFS, so that $d_i$ is the outdegree of the vertex visited at step $i$. Let $L:=L_n=n_{\cT_n}(0)$ be the number of leaves of $\cT_n$, and let $\mathcal G_n:=\sigma((n_{\cT_n}(j))_{j\ge 0})$ be the $\sigma$-algebra generated by the multiset of outdegrees. By Lemma~\ref{lem: dfs_uniform_Luk}, conditional on $\mathcal G_n$ the sequence $D$ is uniform on the set $\Sigma_M^{\mathrm{Luk}}$ of \L{}ukasiewicz orderings of $M=M(\cT_n)$. We emphasise that $D\mid\mathcal G_n$ is \emph{not} exchangeable as an element of $\Sigma_M$: the partial-sum constraint defining $\Sigma_M^{\mathrm{Luk}}$ breaks the symmetry under arbitrary permutations.
\medskip

Identify the uniformly chosen vertex $v_n$ with the vertex visited at DFS-step $I$, where $I$ is uniform on $\{1,\dots,n\}$ and independent of $\cT_n$ (and hence of $\mathcal G_n$ and $D$).
\medskip

Suppose $\partial_{\mathrm{p}}(\cT_n,v_n)\ge k$. Define recursively the chain of first descendants of $v_n$: set $u_0:=v_n$, and for $i=1,\dots,k-1$, let $u_i$ be the leftmost child of $u_{i-1}$ in $\cT_n$, provided $u_{i-1}$ has at least one child. We claim that this construction produces $k$ vertices $u_0,u_1,\dots,u_{k-1}$, all of them non-leaves except possibly the last.
\medskip

By induction on $i$: each $u_i$ is a descendant of $v_n$ at graph distance $i$ from $v_n$, hence belongs to the fringe subtree at $v_n$. For $i\le k-1$ we have $d(v_n,u_i)=i<k\le\partial_{\mathrm{p}}(\cT_n,v_n)$, so $u_i$ is not a leaf of the fringe subtree at $v_n$. Since the leaves of the fringe subtree at $v_n$ are exactly the leaves of $\cT_n$ that are descendants of $v_n$, $u_i$ is not a leaf of $\cT_n$ either, for $i\le k-2$. In particular $u_{k-1}$ is well-defined as the first child of $u_{k-2}$, and each of $u_0,u_1,\dots,u_{k-2}$ has positive outdegree.
\medskip

By the definition of DFS, the chain $(u_0,u_1,\dots,u_{k-1})$ is visited at consecutive DFS-steps: $u_0$ at step $I$, $u_1$ at step $I+1$, and in general $u_i$ at step $I+i$. In particular $I+m-1\le n$, with all indices read in the ordinary (non-cyclic) sense, and
\begin{equation}\label{eq:vertex_chain_inclusion}
\bigl\{\partial_{\mathrm{p}}(\cT_n,v_n)\ge k\bigr\}\subseteq\bigl\{I+m-1\le n,\ d_I>0,\,d_{I+1}>0,\,\dots,\,d_{I+m-1}>0\bigr\}.
\end{equation}
For $k=1$ the right-hand side reduces to the trivial event $\{I\le n\}$.
\medskip

We bound the probability of the right-hand side of~\eqref{eq:vertex_chain_inclusion} via a random cyclic rotation. Define the rotated sequence
\[
D':=\rho^{I-1}(D)=(d_I,d_{I+1},\dots,d_n,d_1,\dots,d_{I-1}),
\]
where $\rho$ denotes the cyclic left-shift on $\Z_{\ge 0}^n$, so that $D'_j=d_{((I+j-2)\bmod n)+1}$ for $j=1,\dots,n$.

We claim that, conditional on $\mathcal G_n$, the uniformly rotated sequence $D'$ is uniformly distributed on $\Sigma_M$. This is a consequence of the cycle lemma; the uniform distribution of $v_n$ enters in an essential way: only when $I$ is uniform on $\{1,\dots,n\}$ does the cyclic shift $\rho^{I-1}$ produce the uniform measure on $\Sigma_M$. Indeed, the cyclic group $\Z/n\Z$ acts freely on $\Sigma_M$ when $\sum_c c\,M(c)=n-1$: every orbit has size exactly $n$, and by the cycle lemma applied with $r=1$, each orbit contains exactly one \L{}ukasiewicz ordering. Hence $|\Sigma_M^{\mathrm{Luk}}|=|\Sigma_M|/n$, and for every $\sigma\in\Sigma_M$ there is a unique pair $(\tau,j)\in\Sigma_M^{\mathrm{Luk}}\times\{1,\dots,n\}$ with $\sigma=\rho^{j-1}(\tau)$. Since $D$ is uniform on $\Sigma_M^{\mathrm{Luk}}$ and $I$ is uniform on $\{1,\dots,n\}$ independent of $D$, the joint law of $(D,I)$ assigns probability $\frac{1}{n}\cdot\frac{1}{|\Sigma_M^{\mathrm{Luk}}|}=\frac{1}{|\Sigma_M|}$ to each pair, and consequently
\[
\P(D'=\sigma\mid\mathcal G_n)=\frac{1}{|\Sigma_M|}\qquad\text{for every }\sigma\in\Sigma_M.
\]
For a deterministically chosen vertex, such as the root, $I$ is constant and the present argument does not apply; the corresponding tail bound for the root requires the conditional cycle lemma and an identity similar to that in Lemma \ref{lem: master_identity}, as in the proof of Proposition~\ref{lem: sub_exp_critical}.
\medskip

On the event $\{I+m-1\le n\}$, the first $m$ entries of $D'$ coincide with 
\[(d_I,d_{I+1},\dots,d_{I+m-1})\]
in the ordinary (non-cyclic) sense. Therefore
\begin{equation}\label{eq:window_to_rotated}
\P\bigl(I+m-1\le n,\ d_I>0,\dots,d_{I+m-1}>0\bigm|\mathcal G_n\bigr)
\le\P\bigl(D'_1>0,\dots,D'_m>0\bigm|\mathcal G_n\bigr).
\end{equation}
Since $D'$ is uniform on $\Sigma_M$ given $\mathcal G_n$, its first $m$ coordinates form a uniform sample of size $m$ without replacement from $M$. The probability that all $m$ samples are positive is the probability of drawing $m$ items from the $n-L$ non-zero entries of $M$, in order, which by elementary counting equals
\begin{equation}\label{eq:hypergeometric_first_principles}
\P\bigl(D'_1>0,\dots,D'_m>0\bigm|\mathcal G_n\bigr)=\frac{(n-L)_m}{(n)_m},
\end{equation}
where $(x)_m:=x(x-1)\cdots(x-m+1)$ is the falling factorial, with the convention $(n-L)_m/(n)_m=0$ if $m>n-L$.
\medskip

Combining~\eqref{eq:vertex_chain_inclusion},~\eqref{eq:window_to_rotated} and~\eqref{eq:hypergeometric_first_principles},
\begin{equation}\label{eq:vertex_window_bound}
\P\bigl(\partial_{\mathrm{p}}(\cT_n,v_n)\ge k\bigm|\mathcal G_n\bigr)\le\frac{(n-L)_m}{(n)_m}.
\end{equation}

Set $\eta:=L/n$. For each $j\in\{0,1,\dots,m-1\}$,
\[
\frac{n-L-j}{n-j}=1-\frac{L}{n-j}\le \left(1-\frac{L}{n}\right)=1-\eta,
\]
since $n-j\le n$ and $L\ge 0$. Multiplying for $j=0,1,\dots,m-1$ and using $1-x\le e^{-x}$,
\[
\frac{(n-L)_m}{(n)_m}=\prod_{j=0}^{m-1}\frac{n-L-j}{n-j} = \prod_{j=0}^{m-1}\left(1-\frac{L}{n-j}\right)\le(1-\eta)^m\le e^{-m\eta}=e^{-(k-1)\eta}.
\]
Substituting into~\eqref{eq:vertex_window_bound} and taking expectations,
\begin{equation}\label{eq:vertex_R_bound_exp}
\P\bigl(\partial_{\mathrm{p}}(\cT_n,v_n)\ge k\bigr)\le\E\!\left[e^{-(k-1)\eta}\right].
\end{equation}

Fix $c_0\in(0,p_0)$ and split according to whether $\eta\ge c_0$ or $\eta<c_0$:
\[
\E\!\left[e^{-(k-1)\eta}\right]\le e^{-(k-1)c_0}+\P(\eta<c_0).
\]
By Janson's leaf concentration for conditioned Galton--Watson trees (\cite[Theorem~11.4]{Janson2012}, restated as~\cite[Theorem~5.1]{AddarioBerryBrandenbergerHamdanKerriou2022}), applied with $\varepsilon:=p_0-c_0>0$ (types~I and~II), there exist $a>0$ and $n_0\ge 1$ such that for every $n\ge n_0$,
\[
\P(\eta<c_0)\le\P(|\eta-p_0|>\varepsilon)\le e^{-an}.
\]

For $n\ge n_0$ and $1\le k\le n$,
\[
\P\bigl(\partial_{\mathrm{p}}(\cT_n,v_n)\ge k\bigr)\le e^{c_0}\,e^{-c_0 k}+e^{-an}\le e^{c_0}\,e^{-c_0 k}+e^{a}\,e^{-ak},
\]
using $e^{-an}\le e^{-ak}$ for $k\le n$. Setting $C:=e^{c_0}+e^{a}$ and $c:=\min(c_0,a)>0$,
\[
\P\bigl(\partial_{\mathrm{p}}(\cT_n,v_n)\ge k\bigr)\le C\,e^{-ck},\qquad 1\le k\le n.
\]
For $k=0$ the bound is trivial (by fixing any $C\ge 1$), and for $k>n$ the probability vanishes since $\partial_{\mathrm{p}}(\cT_n,v_n)\le n-1$ deterministically. The proof is complete.
\end{proof}

\begin{lem}\label{lem:integrabilidad_protection_vertex}
For every $p\ge 0$, $\sup_{n\ge n_0}\E\bigl[\partial_{\mathrm{p}}(\cT_n,v_n)^{p}\bigr]<+\infty$,
and the family $\{\partial_{\mathrm{p}}(\cT_n,v_n)\}_{n\ge 1}$ is uniformly integrable of every
order.
\end{lem}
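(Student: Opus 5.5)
The plan is to mirror the proof of Lemma~\ref{lem: integrabilidad} exactly, with Proposition~\ref{prop:protection_uniform_vertex_direct} playing the role of Proposition~\ref{lem: sub_exp_critical}. That proposition gives constants $n_0\ge1$ and $C,c>0$, depending only on the weight sequence, such that $\P(\partial_{\mathrm{p}}(\cT_n,v_n)\ge k)\le Ce^{-ck}$ for all $n\ge n_0$ and $k\ge0$. Since $\partial_{\mathrm{p}}(\cT_n,v_n)$ is a nonnegative integer, for fixed $p\ge0$ we write
\[
\E\bigl[\partial_{\mathrm{p}}(\cT_n,v_n)^{p}\bigr]=\sum_{j\ge1} j^{p}\,\P\bigl(\partial_{\mathrm{p}}(\cT_n,v_n)=j\bigr)\le\sum_{j\ge1} j^{p}\,\P\bigl(\partial_{\mathrm{p}}(\cT_n,v_n)\ge j\bigr)\le C\sum_{j\ge1} j^{p}e^{-cj}.
\]
The right-hand side is finite and independent of $n$, which gives the uniform moment bound over $n\ge n_0$.

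For uniform integrability we bound the tail contributions directly. For every $K\ge0$,
\[
\sup_{n\ge n_0}\E\bigl[\partial_{\mathrm{p}}(\cT_n,v_n)^{p}\mathbf 1_{\{\partial_{\mathrm{p}}(\cT_n,v_n)>K\}}\bigr]\le C\sum_{j>K} j^{p}e^{-cj},
\]
and this tends to $0$ as $K\to\infty$ because it is the tail of a convergent series. That is uniform integrability of $\{\partial_{\mathrm{p}}(\cT_n,v_n)^p\}_{n\ge n_0}$. Since $p$ is arbitrary, the family is uniformly integrable of every order.

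To pass from $n\ge n_0$ to $n\ge1$, note that the finitely many indices $n<n_0$ do no harm. For each such $n$, $\partial_{\mathrm{p}}(\cT_n,v_n)\le n-1<n_0$ deterministically, so these variables are bounded by $n_0$. Adding finitely many bounded variables preserves uniform integrability: for $K\ge n_0$ their tail contributions vanish identically.

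There is no genuine obstacle here; the substance lies entirely in Proposition~\ref{prop:protection_uniform_vertex_direct}. The only points needing care are the type~III case and finiteness of the limit. In type~III that proposition asserts the bound only as holding trivially, but the same deterministic bound $\partial_{\mathrm{p}}\le n-1$ together with the window estimate~\eqref{eq:vertex_R_bound_exp} (where $\eta\to1$) gives the same exponential control. I would state this explicitly rather than rely on it implicitly. Finiteness of the limit is immediate because $\E[\partial(\cT)^p]<\infty$ follows from $\P(\partial(\cT)\ge k)=F_k(1)\le(1-p_0)^k$ by~\eqref{eq: inequality_F}. Combined with Propositions~\ref{propo:conv_dist_protection_critical} and~\ref{propo:conv_dist_protection_subcritical}, this yields the moment convergence in Theorem~\ref{te:protection}.
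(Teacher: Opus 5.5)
Your argument is correct and matches the paper's proof: the paper also reruns the proof of Lemma~\ref{lem: integrabilidad} with Proposition~\ref{prop:protection_uniform_vertex_direct} in place of Proposition~\ref{lem: sub_exp_critical}, bounding $\sup_{n\ge n_0}\E\bigl[\partial_{\mathrm{p}}(\cT_n,v_n)^{p}\mathbf 1_{\{\partial_{\mathrm{p}}(\cT_n,v_n)>K\}}\bigr]$ by $C\sum_{j>K}j^{p}e^{-cj}$. You additionally handle the finitely many $n<n_0$ via $\partial_{\mathrm{p}}(\cT_n,v_n)\le n-1$, a step the paper leaves implicit; in your type~III remark, note that the window estimate~\eqref{eq:vertex_R_bound_exp} gives exponential control only together with an exponential rate $\P(\eta<c_0)\le e^{-an}$ (Janson's leaf concentration with $p_0=1$), not from $\eta\to1$ in probability alone.
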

\begin{proof}
The argument is identical to the proof of Lemma~\ref{lem: integrabilidad}, with
Proposition~\ref{prop:protection_uniform_vertex_direct} in place of
Proposition~\ref{lem: sub_exp_critical}. In particular, for every $p\ge0$ and every $K\ge0$,
\[
\sup_{n\ge n_0}\E\bigl[\partial_{\mathrm{p}}(\cT_n,v_n)^{p}\,\mathbf 1_{\{\partial_{\mathrm{p}}(\cT_n,v_n)>K\}}\bigr]
\le C\sum_{j>K} j^{p}e^{-cj}\ \xrightarrow[K\to\infty]{}\ 0,
\]
so $\sup_{n\ge n_0}\E[\partial_{\mathrm{p}}(\cT_n,v_n)^{p}]<+\infty$ and the family is uniformly
integrable of every order.
\end{proof}

\begin{theo}\label{theo:moments_protection}
Let $(\cT_n)_{n\geq1}$ be a sequence of size-conditioned simply generated trees of type~I, II or~III. Then, for every $p>0$,
\[
\lim_{n\to\infty}\E\bigl[\partial_{\mathrm{p}}(\cT_n,v_n)^{p}\bigr]
=\E\bigl[\partial(\cT)^{p}\bigr]
=\sum_{k\ge 1}\bigl(k^{p}-(k-1)^{p}\bigr)\,F_k(1).
\]
\end{theo}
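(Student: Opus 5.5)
The statement follows the same template as the root case, with convergence in distribution and uniform integrability combined through the standard moment-convergence theorem. First, I would take the distributional convergence $\partial_{\mathrm{p}}(\cT_n,v_n)\xrightarrow{d}\partial(\cT)$ from Proposition~\ref{propo:conv_dist_protection_critical} in type~I and from Proposition~\ref{propo:conv_dist_protection_subcritical} in types~II and~III. Second, I would take uniform integrability of every order of $\{\partial_{\mathrm{p}}(\cT_n,v_n)^p\}_{n\ge n_0}$ from Lemma~\ref{lem:integrabilidad_protection_vertex}. Since $\partial_{\mathrm{p}}\ge0$, the map $x\mapsto x^p$ is continuous on $[0,\infty)$, so the continuous mapping theorem gives $\partial_{\mathrm{p}}(\cT_n,v_n)^p\xrightarrow{d}\partial(\cT)^p$. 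Theorem~3.5 of \cite{MR1700749} then yields $\E[\partial_{\mathrm{p}}(\cT_n,v_n)^p]\to\E[\partial(\cT)^p]<\infty$. Discarding the finitely many indices $n<n_0$ does not affect the limit.

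For the explicit series, I would use the tail-sum identity for a nonnegative integer-valued variable $X$:
\[
\E[X^p]=\sum_{k\ge1}\bigl(k^p-(k-1)^p\bigr)\P(X\ge k).
\]
This follows by writing $X^p=\sum_{k=1}^{X}(k^p-(k-1)^p)$ and applying Tonelli. Combined with $\P(\partial(\cT)\ge k)=F_k(1)$, recorded in the subsection on limiting objects, this gives the stated formula.

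One point needs care. In type~II the tree $\cT$ is subcritical and in type~I it is critical, so $\cT$ is almost surely finite and has at least one leaf; hence $\partial(\cT)<\infty$ almost surely and no mass sits at $\infty$. Finiteness of the series can be checked directly from $F_k(1)\le(1-p_0)^k$, which follows from \eqref{eq: inequality_F}. Alternatively, it follows from Fatou's lemma together with the uniform exponential tail bound of Proposition~\ref{prop:protection_uniform_vertex_direct}. In type~III, $\cT$ is a single vertex, so $F_k(1)=0$ for $k\ge1$ and both sides of the formula are $0$.

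The only step I expect to need care is type~III. There the uniform integrability lemma rests on the type~III case of Proposition~\ref{prop:protection_uniform_vertex_direct}, which was dismissed as trivial. If needed, I would make it explicit by reusing the bound \eqref{eq:vertex_window_bound}, $\P(\partial_{\mathrm{p}}\ge k\mid\mathcal G_n)\le(1-\eta)^{k-1}$, which does not depend on the type. I would then replace Janson's concentration step by the fact that $L/n\to1$ in probability for type~III (Janson \cite{Janson2012}, \S11), taking any fixed $c_0\in(0,1)$. This gives tightness of the tails. The crude bound $\partial_{\mathrm{p}}\le n$ would then control the residual term, provided $\P(\eta<c_0)$ decays faster than any power of $n$. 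If that rate is not available, I would instead argue directly that $\E[\partial_{\mathrm{p}}^p\mathbf 1_{\{\partial_{\mathrm{p}}>K\}}]\le\E[(1-\eta)^{K}\cdot\text{poly}(K)]$ summed over $K$, and apply dominated convergence in $\eta$.
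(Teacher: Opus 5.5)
This is the paper's proof: you combine the distributional convergence of Propositions~\ref{propo:conv_dist_protection_critical} and~\ref{propo:conv_dist_protection_subcritical} with the uniform integrability of Lemma~\ref{lem:integrabilidad_protection_vertex} via Theorem~3.5 of \cite{MR1700749}, and the tail-sum identity together with $\P(\partial(\cT)\ge k)=F_k(1)$ correctly gives the series.

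In your optional type~III aside, only the first route works. The fallback via ``dominated convergence in $\eta$'' has no $n$-independent dominating function, because $\sum_{K<j\le n} j^{p}(1-\eta)^{j-1}$ is of order $n^{p+1}$ when $\eta$ is of order $1/n$. So you do need $\P(\eta<c_0)$ to decay faster than any power of $n$. That rate is supplied by the exponential leaf-concentration estimate, which the paper already uses implicitly in type~III (with $p_0=1$) in the proof of Proposition~\ref{lem: sub_exp_critical}.
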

\begin{proof}
Combine Propositions~\ref{propo:conv_dist_protection_critical} and~\ref{propo:conv_dist_protection_subcritical}
(convergence in distribution) with Lemma~\ref{lem:integrabilidad_protection_vertex}
(uniform integrability of every order); convergence of moments then follows from
Theorem~3.5 of~\cite[p.~30]{MR1700749}.
\end{proof}

\section{The distance to the border of a random vertex}\label{sec:distance_random_vertex}

In this section we study the distance to the border $\partial(\cT_n,v_n)$ of a uniformly
chosen vertex $v_n$ of a size-conditioned simply generated tree, that is, the distance
from $v_n$ to the nearest leaf of the whole tree, in the three regimes of
Section~\ref{subsec:types}. Unlike the protection number, this statistic also sees the
leaves lying outside the fringe subtree at $v_n$, so its local limit is the distance to
the border of the distinguished vertex in the \emph{pointed} local limit $\cT^{*}$ introduced for type I by Aldous~\cite{MR1102319} (see also \cite[Rem. 5.3]{Janson2012})
and for the condensation regimes by Stufler~\cite{zbMATH07039768}; we prove
$\partial(\cT_n,v_n)\xrightarrow{d}\partial(\cT^{*})$ in each type. For the tail we do
not need a separate argument: since $\partial(\cT_n,v_n)\le\partial_{\mathrm{p}}(\cT_n,v_n)$
pointwise, the universal exponential bound for the protection number
(Proposition~\ref{prop:protection_uniform_vertex_direct}) is inherited verbatim by
$\partial(\cT_n,v_n)$, with the same constants. From this inherited bound the uniform
integrability of $\{\partial(\cT_n,v_n)\}_{n\ge1}$ of every order, and hence the
convergence of all moments of $\partial(\cT_n,v_n)$ to those of $\partial(\cT^{*})$,
follow exactly as for the protection number.

\subsection{Limiting trees}

\label{sec:tstar}

The limiting trees for the vicinity of $v_n$ live on a space of pointed plane trees, where trees carry a marked vertex and edges carry a direction so that we can distinguish ancestors from children. The formal space for these objects is constructed in~\cite{zbMATH07039768}. We will not make use of any specific properties of this topology and hence refer the reader to this source for details. The limiting tree $\mathcal{T}^*$ is illustrated in Figure~\ref{fig:pointed_local_limits} and constructed as follows.

For type I, the tree $\cT^*$ consists of a marked vertex $u_0$ which becomes the root of a copy $\cT$. Its parent $u_1$ is created by adding a vertex with  offspring according to an independent copy of $\hat{\xi}$ and identifying a uniformly selected child with $u_0$. The remaining siblings of $u_0$ become roots of independent copies of $\cT$. This construction is repeated to add a parent to $u_1$, and so on,  yielding an infinite number of ancestors of $u_0$.

For type II the construction is analogous, but each ancestor of $u_0$ has the chance to receive an infinite number of children. When this happens for the time when creating a parent of some vertex $u_K$, $K \ge 0$, we add this infinite number of children to $u_{K+1}$ and identify one of these with $u_K$. All others become roots of independent copies of~$\mathcal{T}$. At this point we stop the construction.

For type III, we let $\cT^*$ denote a vertex with infinitely many children, all of which are leaves, and precisely one of it is declared the marked vertex.

\begin{figure}[H]
\centering
\resizebox{0.9\linewidth}{!}{%
\begin{tikzpicture}[line cap=round,line join=round,
  int/.style ={circle,draw=cInt,fill=cInt,minimum size=4.5pt,inner sep=0},
  leaf/.style={circle,draw=cInt,fill=white,line width=.7pt,minimum size=4.5pt,inner sep=0},
  sq/.style  ={rectangle,draw=cInt,fill=white,line width=.9pt,minimum size=6pt,inner sep=0},
  ed/.style  ={gray!55,line width=0.7pt},
  sp/.style  ={line width=1.05pt,gray!70},
  gw/.style  ={fill=gray!16,draw=gray!50,line width=.5pt},
  big/.style ={pProt,line width=1.25pt}]
\begin{scope}[shift={(0,0)}]
\node[font=\bfseries] at (0,2.05) {type I};
\coordinate (g) at (0,-1.15); \coordinate (u) at (0,-2.85); \coordinate (sq) at (0,-4.45);
\draw[sp,dashed] (g)--(0,-0.15); \node at (0,0.2) {$\vdots$};
\draw[sp] (sq)--(u); \node at (0,-2.0) {$\vdots$};
\foreach \x in {-0.6,-1.4,0.6,1.4}{ \draw[ed] (g)--(\x,-1.8); \fill[gw] (\x,-1.8)--(\x-0.24,-2.37)--(\x+0.24,-2.37)--cycle;}
\node at (-1.0,-2.03) {$\cdots$}; \node at (1.0,-2.03) {$\cdots$};
\foreach \x in {-0.6,-1.4,0.6,1.4}{ \draw[ed] (u)--(\x,-3.5); \fill[gw] (\x,-3.5)--(\x-0.24,-4.07)--(\x+0.24,-4.07)--cycle;}
\node at (-1.0,-3.73) {$\cdots$}; \node at (1.0,-3.73) {$\cdots$};
\fill[gw] (0,-4.45)--(-0.8,-5.75)--(0.8,-5.75)--cycle; \node[font=\footnotesize,gray!55!black] at (0,-5.2){$\cT$};
\node[int] at (g){}; \node[int] at (u){}; \node[sq] at (sq){};
\end{scope}

\begin{scope}[shift={(5.3,0)}]
\node[font=\bfseries] at (0,2.05) {type II};
\coordinate (A) at (0,0.7); \coordinate (g) at (0,-1.15); \coordinate (u) at (0,-2.85); \coordinate (sq) at (0,-4.45);
\draw[sp] (sq)--(u); \draw[sp] (g)--(A); \node at (0,-2.0) {$\vdots$};
\draw[fill=white,draw=cInt,line width=.7pt] (0,1.15) ellipse (0.31 and 0.37);
\node[font=\small] at (0,1.15) {$?$};
\foreach \x in {-0.6,-1.2,-1.8,0.6,1.2,1.8}{ \draw[ed] (A)--(\x,0.05); \fill[gw] (\x,0.05)--(\x-0.22,-0.5)--(\x+0.22,-0.5)--cycle;}
\node at (-2.2,0.0) {$\cdots$}; \node at (2.2,0.0) {$\cdots$};
\draw[big] (A) circle (5pt); \node[int] at (A){};
\node[pProt,font=\footnotesize,anchor=west] at (0.42,0.72) {$\deg=\infty$};
\foreach \x in {-0.6,-1.4,0.6,1.4}{ \draw[ed] (g)--(\x,-1.8); \fill[gw] (\x,-1.8)--(\x-0.24,-2.37)--(\x+0.24,-2.37)--cycle;}
\node at (-1.0,-2.03) {$\cdots$}; \node at (1.0,-2.03) {$\cdots$};
\foreach \x in {-0.6,-1.4,0.6,1.4}{ \draw[ed] (u)--(\x,-3.5); \fill[gw] (\x,-3.5)--(\x-0.24,-4.07)--(\x+0.24,-4.07)--cycle;}
\node at (-1.0,-3.73) {$\cdots$}; \node at (1.0,-3.73) {$\cdots$};
\fill[gw] (0,-4.45)--(-0.8,-5.75)--(0.8,-5.75)--cycle; \node[font=\footnotesize,gray!55!black] at (0,-5.2){$\cT$};
\node[int] at (g){}; \node[int] at (u){}; \node[sq] at (sq){};
\end{scope}

\begin{scope}[shift={(10.7,0)}]
\node[font=\bfseries] at (0,2.05) {type III};
\coordinate (A) at (0,0.7); \coordinate (sq) at (0,-0.95);
\draw[sp] (sq)--(A);
\draw[fill=white,draw=cInt,line width=.7pt] (0,1.15) ellipse (0.31 and 0.37);
\node[font=\small] at (0,1.15) {$?$};
\foreach \x in {-0.6,-1.2,-1.8,0.6,1.2,1.8}{ \draw[ed] (A)--(\x,0.0); \node[leaf] at (\x,0.0){};}
\node at (-2.2,0.0) {$\cdots$}; \node at (2.2,0.0) {$\cdots$};
\draw[big] (A) circle (5pt); \node[int] at (A){};
\node[pProt,font=\footnotesize,anchor=west] at (0.42,0.72) {$\deg=\infty$};
\node[sq] at (sq){};
\node[pVert,font=\footnotesize,anchor=west] at (0.2,-0.95) {};
\end{scope}

\draw[gray!40,line width=.5pt] (-1.25,-6.05)--(12.55,-6.05);
\begin{scope}
  \node[sq] at (0.2,-6.62){};
  \node[anchor=west,font=\footnotesize] at (0.5,-6.62){marked vertex};
  \node[int] at (4.6,-6.62){};
  \node[anchor=west,font=\footnotesize] at (4.9,-6.62){internal vertex};
  \node[leaf] at (9.0,-6.62){};
  \node[anchor=west,font=\footnotesize] at (9.3,-6.62){leaf};
  \fill[gw] (0.2,-7.05)--(-0.0,-7.45)--(0.4,-7.45)--cycle;
  \node[anchor=west,font=\footnotesize] at (0.5,-7.25){independent copy of $\cT$};
  \draw[big] (4.6,-7.25) circle (4.4pt); \node[int] at (4.6,-7.25){};
  \node[anchor=west,font=\footnotesize] at (4.9,-7.25){vertex of infinite degree};
  \draw[fill=white,draw=cInt,line width=.7pt] (9.0,-7.25) ellipse (0.2 and 0.24);
  \node[font=\scriptsize] at (9.0,-7.25){$?$};
  \node[anchor=west,font=\footnotesize] at (9.3,-7.25){undetermined ancestry};
  \node[anchor=west,font=\footnotesize,gray!55!black] at (-1.05,-7.95)
    {$\vdots$ and $\cdots$ denote further omitted spine vertices and subtrees.};
\end{scope}
\end{tikzpicture}%
}
\caption{The pointed local limits}
\label{fig:pointed_local_limits}
\end{figure}

\subsection{Convergence in distribution}\label{sec:conv_dist_vertex}
In this subsection we prove that $\partial(\cT_n,v_n)$ converges in distribution to the distance to the border of the
distinguished vertex in the corresponding pointed local limit.

\subsubsection{Critical case (type~I)}
We prove convergence in distribution of $\partial(\cT_n,v_n)$ to
$\partial(\cT^{*})$ using the local-limit theorem~\cite[Thm.~5.1]{zbMATH07039768}.

\begin{propo}\label{propo:conv_dist_vertex_critical}
Let $\cT_n$ have type I.
Then
\[
\partial(\cT_n,v_n)\xrightarrow{d}\partial(\cT^{*}).
\]
Equivalently, for every $k\ge 1$,
\[
\lim_{n\to\infty}\P\bigl(\partial(\cT_n,v_n)\ge k\bigr)
=\P\bigl(\partial(\cT^{*})\ge k\bigr).
\]
\end{propo}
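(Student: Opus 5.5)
Type~I gives local convergence $(\cT_n,v_n)\xrightarrow{d}\cT^{*}$ in the pointed local topology by \cite[Thm.~5.1]{zbMATH07039768}, and in type~I $\cT^{*}$ is almost surely locally finite (ancestors have offspring $\hat\xi<\infty$, all other vertices have offspring $\xi$). My plan is to show that for each fixed $k\ge1$ the event $\{\partial(T,v)\ge k\}$ depends only on the ball of radius $k$ around the marked vertex. It should therefore be a continuity set for the law of $\cT^{*}$, and the portmanteau theorem then gives the tail convergence, which is equivalent to the distributional convergence since the variables are integer valued.

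\textbf{Step 1 (locality).} For a pointed tree $(T,v)$, $\partial(T,v)\ge k$ holds if and only if no vertex $u$ with $d(v,u)\le k-1$ is a leaf, i.e.\ has outdegree $0$. To decide whether $u$ is a leaf one needs its children, which lie at distance $\le k$ from $v$. Thus the event is determined by the (directed) ball $B_k(T,v)$ of radius $k$ around $v$, together with the information of which vertices inside it have children. Here I must be slightly careful near the root. If $v$ is close to the root of $\cT_n$, the ancestor chain stops, but this is encoded in the ball. The root has at least one child whenever $n\ge2$, so it is not a leaf and causes no issue.

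\textbf{Step 2 (continuity set).} In the topology of \cite{zbMATH07039768}, convergence $(T_n,v_n)\to(T,v)$ with $(T,v)$ locally finite implies that the balls $B_k(T_n,v_n)$ eventually coincide with $B_k(T,v)$, including the outdegrees of vertices at distance $<k$. Indeed, these vertices have finite degree in $T$, so their neighbourhoods are eventually captured by the truncations. Hence $\{\partial(\cdot,\cdot)\ge k\}$ is open and closed at every locally finite pointed tree. Its boundary therefore has measure zero under the law of $\cT^{*}$, and the portmanteau theorem yields $\P(\partial(\cT_n,v_n)\ge k)\to\P(\partial(\cT^{*})\ge k)$.

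\textbf{Main obstacle.} The step requiring most care is the topological one. I need to check that the pointed local topology of \cite{zbMATH07039768}, which truncates by both height and Ulam--Harris labels in the style of $r_{h,\infty}$, genuinely forces agreement of finite balls around locally finite limits. I would argue as in the proof of Proposition~\ref{Propo: discontinuity_set_delta_condensation}: convergence forces $k_u(T_n)\to k_u(T)$ for every vertex $u$ in the re-rooted labelling. Finiteness of each degree in $\cT^{*}$ then makes these degrees eventually equal, and a finite induction over the $k$ generations of the ball finishes the argument.
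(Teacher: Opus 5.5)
Your proposal is correct and is essentially the paper's argument: both invoke \cite[Thm.~5.1]{zbMATH07039768} and observe that the event $\{\partial(\cT_n,v_n)\ge k\}$ is determined by a bounded neighbourhood of $v_n$. The paper encodes that neighbourhood as the finite marked fringe subtree $H_{k+1}(\cT_n,v_n)$ at the $(k+1)$-st ancestor, which is a random object on a countable space, so every event is a continuity set and your topological check via local finiteness of $\cT^{*}$, though valid, is not needed.
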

\begin{proof}
For each integer $k \ge 0$ let $H_k(\cT_n, v_n)$ denote the marked plane tree obtained by forming the fringe subtree at the $k$th ancestor of $v_n$ and marking it at the vertex that corresponds to $v_n$. Thus, $H_k(\cT_n, v_n)$ has a root vertex and a marked vertex. It is possible that $v_n$ has height less than $k$, and in this case we set $H_k(\cT_n, v_n)$ to some placeholder value. The tree $H_k(\cT^*)$ is defined analogously.
\smallskip

It follows from~\cite[Thm.~5.1]{zbMATH07039768} that for each $k \ge 0$
\[
	H_k(\cT_n, v_n) \convd H_k(\cT^*).
\]
From $H_{k+1}(\cT_n, v_n)$ we may determine whether $\partial(\cT_n,v_n)\ge k$ because $H_{k+1}(\cT_n, v_n)$ (among other things) contains all information on the vertices with graph distances at most $k+1$. Likewise, $H_{k+1}(\cT^*)$ determines whether $\partial(\cT^*)\ge k$. It follows that
\[
\lim_{n\to\infty}\P\bigl(\partial(\cT_n,v_n)\ge k\bigr)
=\P\bigl(\partial(\cT^{*})\ge k\bigr).
\]
\end{proof}

\subsubsection{Condensation case (types~II and~III)}
We prove convergence in distribution of $\partial(\cT_n,v_n)$ to
$\partial(\cT^{*})$, treating types~II and~III together
via~\cite[Thm.~5.6]{zbMATH07039768}.

\begin{propo}\label{propo:conv_dist_vertex_subcritical}
Let $\cT_n$  have type II or III. Then
\[
\partial(\cT_n,v_n)\xrightarrow{d}\partial(\cT^{*}).
\]
Equivalently, for every $k\ge 1$,
\[
\lim_{n\to\infty}\P\bigl(\partial(\cT_n,v_n)\ge k\bigr)
=\P\bigl(\partial(\cT^{*})\ge k\bigr).
\]
\end{propo}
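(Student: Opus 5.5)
The plan follows the proof of Proposition~\ref{propo:conv_dist_vertex_critical}, replacing~\cite[Thm.~5.1]{zbMATH07039768} by the condensation local limit~\cite[Thm.~5.6]{zbMATH07039768}. That theorem describes the vicinity of $v_n$ only up to its first ancestor of large degree, and nothing beyond it. So the main work is to show that this truncated vicinity already determines the events $\{\partial(\cT_n,v_n)\ge k\}$ with probability tending to one.

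For $k\ge0$, let $H_k(\cT_n,v_n)$ be the marked fringe subtree at the $k$-th ancestor of $v_n$, with a placeholder if the height of $v_n$ is less than $k$. Let $K_n$ be the index of the first ancestor $u_{K_n}$ of $v_n$ whose outdegree exceeds a slowly growing threshold. By~\cite[Thm.~5.6]{zbMATH07039768} we have joint convergence: for each fixed $h$, the marked tree $H_{\min(h,K_n)}(\cT_n,v_n)$ converges in distribution to its analogue in $\cT^*$, with $K_n$ converging to the (a.s.\ finite) index $K$ of the infinite-degree ancestor in $\cT^*$. Moreover, the $h$-neighbourhoods of the first $h$ children of the large vertex converge to independent copies of $\cT$. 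In type~III this reduces to the statement that $v_n$ is a.s.\ asymptotically a leaf whose parent has diverging degree, so that $\partial(\cT_n,v_n)\to 0$. This matches $\partial(\cT^*)=0$, because the marked vertex of $\cT^*$ is itself a leaf.

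Fix $k\ge1$ and split according to whether $K_n+1\le k$. On $\{K_n\ge k\}$, all vertices within graph distance $k$ of $v_n$ that are reachable without passing above $u_k$ are described by $H_k(\cT_n,v_n)$. A path of length at most $k$ that passes through an ancestor $u_j$ with $j\le k$ and then goes up stays inside $H_{k}$ anyway. So $\mathbf 1\{\partial(\cT_n,v_n)\ge k\}$ is a function of $H_{k}(\cT_n,v_n)$, exactly as in type~I, and the limit is identified by the same continuity-set argument.

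On $\{K_n<k\}$, the large-degree ancestor $u_{K_n}$ lies at distance $K_n$ from $v_n$. In $\cT^*$ the infinite-degree vertex has infinitely many children rooting independent copies of $\cT$, and one of them is a leaf a.s.\ because $p_0>0$; hence $\partial(\cT^*)\le K+1$. For $\cT_n$, the key step is to show that with probability $\to1$ some among the first $N$ children of $u_{K_n}$ is a leaf, with $N$ fixed but large. This uses the convergence of those children's fringes to i.i.d.\ copies of $\cT$: each is a leaf with limiting probability $p_0$, so the failure probability is at most $(1-p_0)^N+o(1)$. Consequently, on this event the value of $\partial(\cT_n,v_n)$ equals the minimum of $K_n+1$ and the distances to leaves inside $H_{K_n}$ (or via ancestors below $u_{K_n}$). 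This is a continuous functional of the converging data, so we obtain $\P(\partial(\cT_n,v_n)\ge k,\,K_n<k)\to\P(\partial(\cT^*)\ge k,\,K<k)$ after letting $N\to\infty$.

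The main obstacle is justifying this step: that the ancestors beyond $u_{K_n}$, which~\cite[Thm.~5.6]{zbMATH07039768} does not control, are irrelevant. The argument is that any path from $v_n$ leaving above $u_{K_n}$ has length exceeding $K_n+1$, so it cannot beat the leaf child, which exists with high probability. Summing the two cases yields the convergence of tails for every $k$, hence convergence in distribution.
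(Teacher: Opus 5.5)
Your proposal is correct and follows essentially the same route as the paper: it localises to the youngest large-degree ancestor of $v_n$ via Stufler's Theorem~5.6, and notes that with probability at least $1-(1-p_0)^{N}-o(1)$ this ancestor has a leaf child that caps every route leaving above it; letting $N\to\infty$ then closes the argument (the paper packages this with the event $\mathcal{E}_k$ and an arbitrary set $A$ instead of your split on $K_n<k$). One detail needs fixing: in $\cT^*$ the infinite-degree vertex has children extending infinitely on both sides of the spine child $u^*$, and the cited theorem controls the siblings within distance $N$ of $u$ (the paper's pruned tree $R_k$), not the first $N$ children in plane order. So you should use those siblings, or add an exchangeability argument for the sibling subtrees other than the one at $u$.
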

\begin{proof}
	In the type III case the marked vertex of $\cT^*$ is almost surely a leaf. Hence $\partial(\cT^{*})=0$. It was shown in~\cite[Thm.~11.4]{Janson2012} that the probability for $v_n$ to be leaf of $\cT_n$ tends to $1$ as $n \to \infty$. Thus,
	\[
		\partial(\cT_n,v_n) \convd 0 = \partial(\cT^{*}).
	\]
	
	Suppose now that we are in the type II case.
	Given a sequence $\Omega_n$ that tends to infinity as $n \to \infty$ and given an integer $k \ge 0$ we let $R_k(\cT_n, v_n)$ denote the marked plane tree obtained as follows. Consider the youngest ancestor $v$ of $v_n$ with degree at least $\Omega_n$. If no such ancestor exists or if $v_n = v$ we set $R_k(\cT_n, v_n)$ to some placeholder value. If such an ancestor $v \ne v_n$ exists we let $u$ denote the unique son of $v$ that lies on the path between $v$ and $v_n$. We set $R_k(\cT_n,v_n)$ to the tree obtained by taking the fringe subtree at $v$ and pruning away all siblings of $u$ (and their fringe subtrees) that lie more than $k$ to the left or right of $u$. We consider $R_k(\cT_n,v_n)$ as marked at the vertex corresponding to $v_n$. 
	
	We define the tree $R_k(\cT^*)$ analogously, with $v^*$ referring to the unique vertex with infinite degree instead, and $u^*$ again the unique son on $v^*$ that lies on the path from $v^*$ to the marked vertex in $\cT^*$. It follows from~\cite[Thm.~5.6]{zbMATH07039768} that for a specific sequence $\Omega_n$ with $\Omega_n \to \infty$ we have for each $k \ge 0$
	\begin{align}
		\label{eq:convo}
	R_k(\cT_n, v_n) \convd R_k(\cT^*).
	\end{align}

	Let $\mathcal{E}_k$ denote the event that one of the children of the root $v$ in $R_k(\cT_n, v_n)$ is a leaf. In this event, $R_k(\cT_n, v_n)$ determines $\partial(\cT_n,v_n)$ and we have $\partial R_k(\cT_n, v_n) = \partial(\cT_n, v_n)$. We let $\mathcal{E}_k^*$ denote the corresponding event for $\cT^*$. Likewise, on $\mathcal{E}_k^*$ it holds that $\partial(\cT^*)$ is determined by $ R_k(\cT^*)$ and  $\partial R_k(\cT^*) = \partial(\cT^*)$.

	In $R_k(\cT^*)$ each child of the root except $u$ is the root of a Bienaym\'e--Galton--Watson tree which may consist of a single vertex with a positive probability $p_0>0$. Thus, the probability for at least one of the children of $v^*$ in $R_k(\cT^*)$ to be a leaf is bounded from below by $1 - (1-p_0)^{2k}$.
	
	Let $\mathcal{E}_k^c$ denote the complementary event to $\mathcal{E}_k$. It follows from~\eqref{eq:convo} that
	\[
		\limsup_{n \to \infty} \P(\mathcal{E}_k^c) \le (1-p_0)^{2k}.
	\]
	Hence by~\eqref{eq:convo} for any set $A$ of nonnegative integers
	\begin{align*}
		\P\bigl(\partial(\cT_n,v_n) \in A\bigr) &= \P\bigl(\partial(\cT_n,v_n) \in A, \mathcal{E}_k\bigr)  + \P\bigl(\partial(\cT_n,v_n) \in A, \mathcal{E}_k^c\bigr) 
	\end{align*}
	with
	\[
		\limsup_{n \to \infty} \P\bigl(\partial(\cT_n,v_n) \in A, \mathcal{E}_k^c\bigr) \le (1-p_0)^{2k}
	\]
	and
	\[
		\P\bigl(\partial(\cT_n,v_n) \in A, \mathcal{E}_k\bigr) \to \P\bigl(\partial(\cT^*) \in A, \mathcal{E}_k^*\bigr)
	\]
	as $n$ tends to infinity. Moreover, 
	\[
		0 \le \P\bigl(\partial(\cT^*) \in A\bigr) -  \P\bigl(\partial(\cT^*) \in A, \mathcal{E}_k^*\bigr) \le  (1-p_0)^{2k}.
	\]
	Since we may take $k$ arbitrarily large, it follows that
	\[
	\P\bigl(\partial(\cT_n,v_n) \in A\bigr) \to \P\bigl(\partial(\cT^*) \in A\bigr)
	\]
	as $n$ tends to infinity. This completes the proof.
\end{proof}

\subsection{Exponential tail bound and uniform integrability}\label{sec:ui_vertex}
The exponential tail bound for $\partial(\cT_n,v_n)$ is inherited from the protection
number through the pointwise domination~\eqref{eq:partial_pV_dominates_partialV}.

\begin{coro}\label{cor:tail_partialV}
 With the constants $n_0,C,c$ of
Proposition~\ref{prop:protection_uniform_vertex_direct},
\[
\P\bigl(\partial(\cT_n,v_n)\ge k\bigr)\le C\,e^{-ck},\qquad\text{for any }n\ge n_0\text{ and any }k\ge 0.
\]
\end{coro}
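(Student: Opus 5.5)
The corollary follows from the pointwise domination \eqref{eq:partial_pV_dominates_partialV} combined with Proposition~\ref{prop:protection_uniform_vertex_direct}. There is no new probabilistic input to find. The plan is to check that the domination holds for every realisation of $(\cT_n,v_n)$, convert it into an inclusion of events, and then transfer the tail bound with unchanged constants.

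\emph{First step.} Fix a finite plane tree $T$ and a vertex $v\in T$. The fringe subtree $\mathrm{fringe}_v(T)$ is finite and nonempty, so it has at least one leaf; call it $u$. This $u$ has no children in the fringe, and since the fringe contains all descendants of $v$, it has no children in $T$ either. Hence $u\in\mathrm{leaves}(T)$. Distances from $v$ to descendants are the same in $T$ and in the fringe, so
\[
\partial(T,v)\le d(v,u)=\partial_{\mathrm{p}}(T,v).
\]
Applying this to $(T,v)=(\cT_n,v_n)$ gives, almost surely and for every $k\ge0$,
\[
\{\partial(\cT_n,v_n)\ge k\}\subseteq\{\partial_{\mathrm{p}}(\cT_n,v_n)\ge k\}.
\]

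\emph{Second step.} Taking probabilities in this inclusion and invoking Proposition~\ref{prop:protection_uniform_vertex_direct} gives
\[
\P\bigl(\partial(\cT_n,v_n)\ge k\bigr)\le\P\bigl(\partial_{\mathrm{p}}(\cT_n,v_n)\ge k\bigr)\le C e^{-ck}
\]
for all $n\ge n_0$ and $k\ge0$. The constants $n_0,C,c$ are those of that proposition and depend only on the weight sequence.

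\emph{Type~III.} Here the proof of Proposition~\ref{prop:protection_uniform_vertex_direct} is only sketched, as a ``trivial'' case. If one wants a self-contained check, note that $\partial_{\mathrm{p}}(\cT_n,v_n)\ge k$ implies $v_n$ is not a leaf when $k\ge1$. In addition, the window bound \eqref{eq:vertex_window_bound} used there holds for any multiset $M$, so it gives
\[
\P(\partial_{\mathrm{p}}\ge k)\le\E\bigl[(1-L/n)^{k-1}\bigr]
\]
in every type. The splitting on $\{\eta\ge c_0\}$ then requires only that $L/n$ be bounded below with exponentially small failure probability. In type~III this follows from $L/n\to1$ (\cite[Thm.~11.4]{Janson2012}), with any $c_0<1$.

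\emph{Expected obstacle.} The one point that needs care is exactly this verification that the protection-number bound really covers type~III with explicit constants. I would address it by reusing the hypergeometric window argument with $c_0\in(0,1)$, rather than appealing to degeneracy of the limit. Once that is in place, the corollary is immediate.
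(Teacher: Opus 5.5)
Your proof is the paper's proof: the pointwise domination $\partial(\cT_n,v_n)\le\partial_{\mathrm{p}}(\cT_n,v_n)$ from \eqref{eq:partial_pV_dominates_partialV} turns Proposition~\ref{prop:protection_uniform_vertex_direct} into the stated bound with the same $n_0,C,c$, and your check of the domination via a leaf of the finite fringe subtree is correct. The type~III aside is not needed for the corollary; if you keep it, note that convergence $L/n\to1$ in probability alone does not give the exponentially small bound on $\P(L/n<c_0)$ that your splitting requires, so you would need the exponential form of the leaf concentration, as the paper uses in the root case.
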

\begin{proof}
Immediate from $\partial(\cT_n,v_n)\le\partial_{\mathrm{p}}(\cT_n,v_n)$
(see~\eqref{eq:partial_pV_dominates_partialV}) and
Proposition~\ref{prop:protection_uniform_vertex_direct}.
\end{proof}

\begin{lem}\label{lem:integrabilidad_distance_vertex}
For every $p\ge 0$, we have that $\sup_{n\ge n_0}\E\bigl[\partial(\cT_n,v_n)^{p}\bigr]<+\infty$,
and the family $\{\partial(\cT_n,v_n)\}_{n\ge 1}$ is uniformly integrable of every order.
\end{lem}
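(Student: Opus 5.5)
The plan is to mirror the proof of Lemma~\ref{lem: integrabilidad}, with Corollary~\ref{cor:tail_partialV} in place of Proposition~\ref{lem: sub_exp_critical}. Corollary~\ref{cor:tail_partialV} gives constants $n_0\ge 1$ and $C,c>0$, depending only on the weight sequence, with $\P(\partial(\cT_n,v_n)\ge k)\le Ce^{-ck}$ for all $n\ge n_0$ and $k\ge 0$. No new probabilistic input should be needed: the corollary comes from the pointwise domination $\partial(\cT_n,v_n)\le\partial_{\mathrm p}(\cT_n,v_n)$ together with Proposition~\ref{prop:protection_uniform_vertex_direct}.

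First, fix $p\ge 0$ and bound the moment using the integer-valuedness of $\partial(\cT_n,v_n)$. Writing $\E[X^p]=\sum_{j\ge1} j^p\,\P(X=j)$ and using $\P(X=j)\le\P(X\ge j)$, we get for $n\ge n_0$
\[
\E\bigl[\partial(\cT_n,v_n)^p\bigr]\le C\sum_{j\ge1}j^pe^{-cj}<\infty.
\]
The right-hand side is independent of $n$, which gives the uniform moment bound.

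Second, control the tails in the same way. For each $K\ge0$,
\[
\sup_{n\ge n_0}\E\bigl[\partial(\cT_n,v_n)^p\mathbf 1_{\{\partial(\cT_n,v_n)>K\}}\bigr]\le C\sum_{j>K}j^pe^{-cj}.
\]
This is the tail of a convergent series, so it tends to $0$ as $K\to\infty$. That is uniform integrability of $\{\partial(\cT_n,v_n)^p\}_{n\ge n_0}$, and since $p$ is arbitrary it holds of every order.

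The only point requiring care is the indices $n<n_0$, because the statement is about the family indexed by all $n\ge1$. There are only finitely many such indices, and $\partial(\cT_n,v_n)\le n-1$ is bounded for each, so each contributes a bounded random variable. A finite union of uniformly integrable families is uniformly integrable, so the whole family $\{\partial(\cT_n,v_n)\}_{n\ge1}$ is uniformly integrable of every order.

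I expect no step to be a genuine obstacle. The only mild subtlety is to check that the tail bound of Corollary~\ref{cor:tail_partialV} indeed covers all three types, including type~III. There Proposition~\ref{prop:protection_uniform_vertex_direct} declared the bound trivial, so it may be worth adjusting constants explicitly. For example, one could use that $\P(\partial_{\mathrm p}(\cT_n,v_n)\ge1)=1-L/n$, together with leaf concentration, and then bound the remaining $k$ crudely.
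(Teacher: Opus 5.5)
Your proposal is correct and follows the paper's argument: the paper also feeds the exponential tail of Corollary~\ref{cor:tail_partialV} into the proof of Lemma~\ref{lem: integrabilidad}, bounding $\sup_{n\ge n_0}\E\bigl[\partial(\cT_n,v_n)^{p}\mathbf 1_{\{\partial(\cT_n,v_n)>K\}}\bigr]$ by $C\sum_{j>K}j^{p}e^{-cj}$. Your handling of the finitely many indices $n<n_0$ via $\partial(\cT_n,v_n)\le n-1$ fills in a step the paper leaves implicit; the type~III aside is not needed here because the corollary is used as a black box, and as sketched (bounding larger $k$ crudely by $\P(\partial_{\mathrm p}(\cT_n,v_n)\ge 1)$) it would not give a tail exponential in $k$ uniformly in $n$.
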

\begin{proof}
Identical to the proof of Lemma~\ref{lem:integrabilidad_protection_vertex}, using
Corollary~\ref{cor:tail_partialV} in place of
Proposition~\ref{prop:protection_uniform_vertex_direct}.
\end{proof}

\begin{theo}\label{theo:moments_vertex}
Let $\cT_n$ be of type~I, II or~III, and let $\cT^{*}$ denote the corresponding
pointed local limit. Then, for every $p>0$,
\[
\lim_{n\to\infty}\E\bigl[\partial(\cT_n,v_n)^{p}\bigr]
=\E\bigl[\partial(\cT^{*})^{p}\bigr]
=\sum_{k\ge 1}\bigl(k^{p}-(k-1)^{p}\bigr)\,\P\bigl(\partial(\cT^{*})\ge k\bigr).
\]
\end{theo}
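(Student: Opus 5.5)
The argument will mirror the proofs of Theorem~\ref{theo:moments_protection} and of the moment theorem for the root. First I will record convergence in distribution, $\partial(\cT_n,v_n)\xrightarrow{d}\partial(\cT^{*})$. For type~I this is Proposition~\ref{propo:conv_dist_vertex_critical}, and for types~II and~III it is Proposition~\ref{propo:conv_dist_vertex_subcritical}.

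Next I will invoke Lemma~\ref{lem:integrabilidad_distance_vertex}, which gives uniform integrability of $\{\partial(\cT_n,v_n)^p\}_{n\ge n_0}$ for every $p\ge0$. Discarding the finitely many indices $n<n_0$ does not affect a limit as $n\to\infty$. Since the random variables are nonnegative integer valued, $x\mapsto x^p$ is continuous on their range. Hence $\partial(\cT_n,v_n)^p\xrightarrow{d}\partial(\cT^{*})^p$ by the mapping theorem, and Theorem~3.5 of~\cite[p.~30]{MR1700749} then yields
\[
\E\bigl[\partial(\cT_n,v_n)^p\bigr]\to\E\bigl[\partial(\cT^{*})^p\bigr]<\infty .
\]
Finiteness of the limit can also be read off directly. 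Since $\{\partial\ge k\}$ is determined by a finite neighbourhood, the portmanteau step in the proofs of the convergence propositions gives $\P(\partial(\cT^{*})\ge k)=\lim_n\P(\partial(\cT_n,v_n)\ge k)$. By Corollary~\ref{cor:tail_partialV} this is therefore at most $Ce^{-ck}$.

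Finally, the series representation is the tail-sum identity for a nonnegative integer random variable $X$:
\[
X^p=\sum_{k=1}^{X}\bigl(k^p-(k-1)^p\bigr)=\sum_{k\ge1}\bigl(k^p-(k-1)^p\bigr)\mathbf 1_{\{X\ge k\}} .
\]
Taking expectations via Tonelli, with all terms nonnegative for $p>0$, gives the displayed sum for $X=\partial(\cT^{*})$. This is exactly as in Proposition~\ref{propo: moments_kesten_distance_finite}, and the exponential tail bound guarantees convergence of the series.

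The only point needing care is that $\partial(\cT^{*})$ is almost surely finite, so that it is a genuine integer random variable. In type~I the spine is infinite, and one might worry that the marked vertex sees no leaf. However, the tail bound inherited through the limit gives $\P(\partial(\cT^{*})\ge k)\le Ce^{-ck}\to0$, which settles this. In types~II and~III almost-sure finiteness already follows from the presence of leaf children at the infinite-degree vertex. I expect no genuine obstacle: the work has been done in the preceding propositions and Corollary~\ref{cor:tail_partialV}.
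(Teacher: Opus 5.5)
Your proposal is correct and is essentially the paper's proof: convergence in distribution from Propositions~\ref{propo:conv_dist_vertex_critical} and~\ref{propo:conv_dist_vertex_subcritical}, uniform integrability of every order from Lemma~\ref{lem:integrabilidad_distance_vertex}, then Theorem~3.5 of~\cite{MR1700749}, with the series coming from the standard tail-sum identity. Your extra care about almost-sure finiteness of $\partial(\cT^{*})$ is valid but not needed: in types~I and~II the marked vertex roots an almost surely finite copy of $\cT$, so it always has a leaf descendant.
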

\begin{proof}
Combine Propositions~\ref{propo:conv_dist_vertex_critical} and~\ref{propo:conv_dist_vertex_subcritical}
(convergence in distribution) with Lemma~\ref{lem:integrabilidad_distance_vertex}
(uniform integrability of every order); the conclusion follows from Theorem~3.5
of~\cite[p.~30]{MR1700749}.
\end{proof}

\subsection{Calculation of limiting probabilities}\label{subsec: pointed_limit_probs}

Having established that $\partial(\cT_n,v_n)\xrightarrow{d}\partial(\cT^{*})$ in each regime, we determine the law of $\partial(\cT^{*})$ in closed form. The probability is formed by two quantities: the depth tail $F_j(1)=\P(\partial(\cT)\geq j)$ of an ordinary Bienaym\'e--Galton--Watson tree, and the rooted tail $c_k=\P(\partial(\hat{\cT})\geq k)$.

\begin{propo}\label{propo: c_k_star}
	Let $(\cT_n)_{n\geq1}$ be a sequence of size-conditioned simply generated trees and set $c_k^{*}:=\P(\partial(\cT^{*})\geq k)$. Then $c_0^{*}=1$, and in types~I and~II, for every $k\geq1$,
	\begin{align}\label{eq: describe_c_k_star}
		c_k^{*}=F_k(1)\prod_{i=1}^{k-2}G'(F_i(1))=F_k(1)\,c_{k-1}\,,
	\end{align}
	with the conventions that the empty product equals $1$ and $c_0=1$; in particular $c_1^{*}=1-p_0$ and $c_2^{*}=F_2(1)$. This admits the closed form
	\begin{align}\label{eq: closed_c_k_star}
		c_k^{*}=\frac{F_k(1)\,F_{k-1}'(1)}{m}\,,\qquad k\geq2\,,
	\end{align}
	so that $c_k^{*}=F_k(1)\,F_{k-1}'(1)$ for $k\geq2$ when $m=1$ (type~I). In type~III the marked vertex of $\cT^{*}$ is almost surely a leaf, so $\partial(\cT^{*})=0$ almost surely, and hence $c_0^{*}=1$ and $c_k^{*}=0$ for every $k\geq1$.
\end{propo}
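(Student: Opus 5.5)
The plan is to compute $\P(\partial(\cT^{*})\ge k)$ directly from the spine construction of $\cT^{*}$ in Section~\ref{sec:tstar}. We decompose the event $\{\partial(\cT^{*})\ge k\}$ into independent constraints, one on the fringe copy of $\cT$ at the marked vertex $u_0$ and one for each ancestor $u_i$. The key observation is that every vertex of $\cT^{*}$ other than those in the fringe of $u_0$ lies in a subtree hanging off some ancestor $u_i$, $i\ge1$. Such a subtree is rooted at a sibling $w$ of $u_{i-1}$, and $d(u_0,w)=i+1$. Moreover the ancestors $u_i$ themselves are never leaves, since each has the child $u_{i-1}$. Hence $\partial(\cT^{*})\ge k$ holds if and only if two conditions hold:
\begin{itemize}
\item[(a)] the fringe copy of $\cT$ at $u_0$ satisfies $\partial\ge k$;
\item[(b)] for every ancestor $u_i$ with $i+1\le k-1$, each sibling subtree of $u_{i-1}$ (a copy of $\cT$) satisfies $\partial\ge k-i-1$.
\end{itemize}
Ancestors with $i\ge k-1$ impose no constraint, since their other children are already at distance $\ge k$.

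By the independence built into the construction, event (a) has probability $F_k(1)$, using $\P(\partial(\cT)\ge j)=F_j(1)$. For a fixed ancestor $u_i$ with $1\le i\le k-2$, we condition on its offspring $\hat\xi=j<\infty$. The marked lineage is a uniform child, so there are $j-1$ sibling copies of $\cT$. Exactly as in the recurrence \eqref{eq: recurrence}, the factor $j p_j$ yields
\[
\sum_{j\ge1} j p_j\,F_{k-i-1}(1)^{j-1}=G'\bigl(F_{k-i-1}(1)\bigr).
\]

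In type~II we must also treat the atom $\hat\xi=\infty$, where the construction stops. If this happens at some $u_i$ with $i\le k-2$, then among infinitely many independent copies of $\cT$ one is almost surely a single leaf (since $p_0>0$). That leaf sits at distance $i+1\le k-1$, so this atom contributes $0$, consistent with the formula. If the construction stops at some $u_i$ with $i\ge k-1$, or if $u_0$'s ancestors continue beyond level $k-2$, no further constraint arises. Hence stopping later is harmless.

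Multiplying the independent factors gives
\[
c_k^{*}=F_k(1)\prod_{i=1}^{k-2}G'\bigl(F_{k-i-1}(1)\bigr)=F_k(1)\prod_{j=1}^{k-2}G'\bigl(F_j(1)\bigr)=F_k(1)\,c_{k-1},
\]
by \eqref{eq: describe_c_k}. The cases $k=1,2$ follow with empty products: $c_1^{*}=F_1(1)=1-p_0$ and $c_2^{*}=F_2(1)$. The closed form \eqref{eq: closed_c_k_star} then follows immediately from \eqref{eq: closed_c_k} applied at $k-1\ge1$, namely $c_{k-1}=F_{k-1}'(1)/m$.

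For type~III, the marked vertex is a child of the infinite-degree vertex and all such children are leaves. Therefore $\partial(\cT^{*})=0$ almost surely, and the claimed values of $c_k^{*}$ follow.

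The main obstacle is step (b), namely justifying the geometric decomposition rigorously. Two points need care. First, one must check that no path from $u_0$ to a leaf can be shorter than the distances accounted for in (a) and (b). This holds because every vertex of $\cT^{*}$ lies either in the fringe of $u_0$ or in a sibling subtree of some $u_{i-1}$, at distance exactly $i+1+(\text{depth within that subtree})$ from $u_0$. Second, one must confirm that the different levels are genuinely independent. This holds because the construction draws $\hat\xi$ and the sibling copies of $\cT$ afresh at each step.
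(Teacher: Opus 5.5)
Your proposal is correct and follows essentially the same route as the paper. The paper writes $\partial(\cT^{*})=\min(D,U)$, with $D$ the fringe contribution at $u_0$ and $U$ the contribution through the ancestors, and uses their independence. It then computes $\P(U\ge k)=\prod_{j=1}^{k-2}G'(F_{k-1-j}(1))$ by conditioning on $\hat\xi_j$, with the atom at $\infty$ contributing $0$; this is exactly your decomposition into conditions (a) and (b), and the closed form and the type~III case are handled identically.
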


\begin{proof}
	We argue in types~I and~II. Recall from \S\ref{sec:tstar} that the marked vertex $u_0$ roots an independent copy $\cT^{(0)}$ of $\cT$, while its ancestors $u_1,u_2,\dots$ are produced one at a time: independently for each $j\geq1$, the parent $u_j$ of $u_{j-1}$ has $\hat{\xi}_j$ children, where $\P(\hat{\xi}_j=r)=r p_r$ for $r\geq1$ and $\P(\hat{\xi}_j=\infty)=1-m$; one of these children, chosen uniformly at random, is identified with $u_{j-1}$, and the remaining ones root independent copies of $\cT$. In type~I, $m=1$, so $\hat{\xi}_j<\infty$ almost surely and the line of ancestors is infinite; in type~II the construction stops at the first $u_j$ with $\hat{\xi}_j=\infty$, the condensation vertex.
	\smallskip

	The distance from $u_0$ to the nearest leaf splits according to whether that leaf is a descendant of $u_0$ or is reached through an ancestor:
	\begin{align*}
		\partial(\cT^{*})=\min\bigl(D,\,U\bigr)\,,\qquad
		D:=\partial(\cT^{(0)})\,,\qquad
		U:=\min_{j\geq1}\Bigl(j+1+\min_{1\leq i\leq \hat{\xi}_j-1}\partial(\cT^{(j)}_i)\Bigr)\,,
	\end{align*}
	where $\cT^{(j)}_1,\dots,\cT^{(j)}_{\hat{\xi}_j-1}$ are the sibling copies of $\cT$ rooted at the children of $u_j$ other than $u_{j-1}$ (with $\min\emptyset=+\infty$, occurring when $\hat{\xi}_j=1$). In types~I and~II the parent $u_1$ always exists and is never a leaf, since $u_0$ is its child; hence $U\geq2$, and equivalently $U=1+U_{\star}$, the leading $1$ being the obligatory step $u_0\to u_1$ to the always-present parent and $U_{\star}$ the distance from $u_1$ to its nearest leaf not through $u_0$. The variables $D$ and $U$ depend on disjoint and independent parts of the construction, hence are independent, and $\P(D\geq k)=\P(\partial(\cT)\geq k)=F_k(1)$.
	\smallskip

	We compute $\P(U\geq k)$. A leaf reached through $u_j$ lies at distance $j+1+\partial(\cT^{(j)}_i)\geq j+1$; in particular, if $\hat{\xi}_j=\infty$ then almost surely one of the infinitely many sibling copies is a single leaf, contributing a leaf at distance exactly $j+1$. Hence $\{U\geq k\}$ holds if and only if, for every $j\geq1$, every sibling copy at $u_j$ has distance $\geq k-j-1$. For $j\geq k-1$ this is automatic, while for $1\leq j\leq k-2$ it forces $\hat{\xi}_j<\infty$ (an infinite $\hat{\xi}_j$ at such a position would place a leaf at distance $j+1\leq k-1$) together with $\partial(\cT^{(j)}_i)\geq k-j-1$ for all $\hat{\xi}_j-1$ siblings. As these constraints involve independent portions of the construction,
	\begin{align*}
		\P(U\geq k)=\prod_{j=1}^{k-2}\P\bigl(\hat{\xi}_j<\infty,\ \partial(\cT^{(j)}_i)\geq k-j-1 \text{ for all } 1\leq i\leq \hat{\xi}_j-1\bigr)\,.
	\end{align*}
	Conditioning on $\hat{\xi}_j=r$ (probability $r p_r$, $r\geq1$) and using $\P(\partial(\cT)\geq k-j-1)=F_{k-j-1}(1)$, the $j$-th factor equals
	\begin{align*}
		\sum_{r\geq1} r p_r\, F_{k-j-1}(1)^{\,r-1}=G'\bigl(F_{k-j-1}(1)\bigr)\,,
	\end{align*}
	the atom $\P(\hat{\xi}_j=\infty)=1-m$ contributing $0$, consistently with $G'(z)=\sum_{r\geq1}r p_r z^{r-1}$ summing over finite $j$ only. Re-indexing through $i=k-1-j$,
	\begin{align*}
		\P(U\geq k)=\prod_{j=1}^{k-2}G'\bigl(F_{k-1-j}(1)\bigr)=\prod_{i=1}^{k-2}G'(F_i(1))=c_{k-1}\,,
	\end{align*}
	the last identity by \eqref{eq: describe_c_k}. The argument is uniform in the two regimes: in type~II the appearance of the condensation vertex at a position $\geq k-1$ leaves $\{U\geq k\}$ unaffected, while its appearance at a position $\leq k-2$ is precisely excluded by the requirement $\hat{\xi}_j<\infty$ for $1\leq j\leq k-2$. By independence of $D$ and $U$,
	\begin{align*}
		c_k^{*}=\P(D\geq k)\,\P(U\geq k)=F_k(1)\,c_{k-1}\,,
	\end{align*}
	which is \eqref{eq: describe_c_k_star}; for $k=1,2$ it gives $c_1^{*}=F_1(1)=1-p_0$ and $c_2^{*}=F_2(1)$. For the closed form, the chain-rule identity $F_{k-1}'(1)=m\,c_{k-1}$ from the proof of Proposition~\ref{lem: c_k} (valid for $k-1\geq1$) gives $c_{k-1}=F_{k-1}'(1)/m$, whence $c_k^{*}=F_k(1)\,F_{k-1}'(1)/m$ for $k\geq2$; this equals $F_k(1)\,F_{k-1}'(1)$ when $m=1$.
	\smallskip

	Finally, in type~III the limit $\cT^{*}$ is the degenerate star of \S\ref{sec:tstar}, whose marked vertex is one of the infinitely many leaves attached to the condensation vertex; hence $\partial(\cT^{*})=0$ almost surely and $c_k^{*}=0$ for $k\geq1$. This is the limiting form of the fact \cite[Thm.~11.4]{Janson2012} that a uniformly chosen vertex of $\cT_n$ is a leaf with probability tending to $1$.
\end{proof}
\smallskip

In type~II the geometry is the same except that the ascending line of ancestors is finite, terminating at the condensation vertex. Among the infinitely many sibling copies of $\cT$ at that vertex there is almost surely a single leaf, placing a leaf at distance $K+2$ from $u_0$, where $K$ is the number of finite ancestors, and thereby capping the upward route. The finite ancestors $u_1,\dots,u_{k-2}$ nevertheless still contribute the factors $G'(F_i(1))$, so $c_k^{*}=F_k(1)\,c_{k-1}$ is unchanged from type~I.
\smallskip

We now derive an exponential upper bound for $c_k^{*}$, which implies that the moments of $\partial(\cT^{*})$ are finite.

\begin{lem}\label{lem: exp_c_k_star}
	Let $(\cT_n)_{n\geq1}$ be a sequence of size-conditioned simply generated trees of type~I or~II, with $0<p_0<1$. Then
	\begin{align}\label{eq: exponential_c_k_star}
		c_k^{*}=\P(\partial(\cT^{*})\geq k)\leq (1-p_0)^{\,k}\,,\qquad k\geq1\,.
	\end{align}
	In particular, $\partial(\cT^{*})$ has finite moments of all orders.
\end{lem}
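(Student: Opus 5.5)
The plan is to use the product formula of Proposition~\ref{propo: c_k_star}, namely $c_k^{*}=F_k(1)\,c_{k-1}$, and to bound the two factors separately. The factor $c_{k-1}$ is a probability, so $c_{k-1}\le 1$; hence it suffices to show $F_k(1)\le(1-p_0)^k$. This is exactly the bound already obtained in the proof of Lemma~\ref{lem: exponential_c_k}.

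By \eqref{eq: inequality_F} we have $F(s)=\sum_{j\ge1}p_js^j\le s(1-p_0)$ for $0\le s\le1$. The map $F$ is also nondecreasing on $[0,1]$ and sends $[0,1]$ into $[0,1-p_0]\subset[0,1]$. An induction on $j$ then gives
\[
F_j(1)=F(F_{j-1}(1))\le(1-p_0)F_{j-1}(1)\le(1-p_0)^j .
\]
This yields
\[
c_k^{*}\le F_k(1)\le(1-p_0)^k
\]
for every $k\ge1$. The case $k=1$ is consistent with this, since $c_1^{*}=1-p_0$. The hypothesis $0<p_0<1$ ensures the base $1-p_0$ lies in $(0,1)$, so the bound is a genuine exponential decay. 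In types~I and~II we have $p_0>0$, because $\omega_0>0$.

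For the moment statement, I would use the tail-sum identity from Proposition~\ref{propo: moments_kesten_distance_finite}, applied to $\partial(\cT^{*})$:
\[
\E[\partial(\cT^{*})^p]=\sum_{k\ge1}(k^p-(k-1)^p)\,c_k^{*}\le\sum_{k\ge1}p\,k^{p-1}(1-p_0)^k<\infty \qquad (p\ge 1).
\]
For $0<p<1$, I would instead bound $k^p-(k-1)^p\le1$, which again gives a convergent series.

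There is no real obstacle here. The only point needing care is that Proposition~\ref{propo: c_k_star} is valid for all $k\ge1$ in both types~I and~II, including the condensation case. Given that, the estimate $c_{k-1}\le1$ is the entire reduction, and no sharper control of $c_{k-1}$ (such as Lemma~\ref{lem: exponential_c_k}) is needed.
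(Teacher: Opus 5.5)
Your proposal is correct and follows the paper's proof. You bound $c_{k-1}\le 1$ in the identity $c_k^{*}=F_k(1)\,c_{k-1}$ and iterate $F(s)\le(1-p_0)s$ to obtain $F_k(1)\le(1-p_0)^k$. You then sum the tail series for the moments; your explicit bounds on $k^p-(k-1)^p$ for $p\ge1$ and $0<p<1$ just spell out what the paper calls ``the same comparison''.
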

\begin{proof}
	By \eqref{eq: inequality_F}, $F(s)\leq s(1-p_0)$ for $0\leq s\leq1$, so $F_k(1)\leq(1-p_0)^{k}$ for every $k\geq1$. Since $c_{k-1}\leq1$, \eqref{eq: describe_c_k_star} gives $c_k^{*}=F_k(1)\,c_{k-1}\leq(1-p_0)^{k}$. As $0<p_0<1$, the right-hand side is summable, and the same comparison applied to $\sum_k k^{p}c_k^{*}$ shows that all moments are finite. The bound may be sharpened to $c_k^{*}\leq(1-p_0)^{k}c_{k-1}$ using Lemma~\ref{lem: exponential_c_k}.
\end{proof}
\medskip

We summarise the moments of $\partial(\cT^{*})$ in terms of the tail $c_k^{*}$.

\begin{propo}\label{propo: moments_pointed}
	Let $(\cT_n)_{n\geq1}$ be of type~I, II or~III. Then, for every $p>0$,
	\[
	\E\bigl[\partial(\cT^{*})^{p}\bigr]=\sum_{k=1}^{\infty}\bigl(k^{p}-(k-1)^{p}\bigr)\,c_k^{*}\,.
	\]
	In types~I and~II the series is finite by Lemma~\ref{lem: exp_c_k_star}; in type~III we have $\partial(\cT^{*})=0$ almost surely and therefore $\E\bigl[\partial(\cT^{*})^{p}\bigr]=0$, for any $p>0$.
\end{propo}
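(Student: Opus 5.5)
The identity is the tail-sum formula for moments of a nonnegative integer-valued random variable. I would apply it to $X:=\partial(\cT^{*})$, whose values lie in $\{0,1,2,\dots\}\cup\{\infty\}$, and then use Lemma~\ref{lem: exp_c_k_star} to see that the series is finite.

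First, for $X=j\in\mathbb N_0$ one has the telescoping identity
\[
j^{p}=\sum_{k=1}^{j}\bigl(k^{p}-(k-1)^{p}\bigr)=\sum_{k=1}^{\infty}\bigl(k^{p}-(k-1)^{p}\bigr)\mathbf 1_{\{j\ge k\}}.
\]
The same identity holds with both sides equal to $+\infty$ when $j=\infty$, because $k^p-(k-1)^p\ge 1$ for $p\ge 1$ and is positive for all $p>0$. All summands are nonnegative, so Tonelli's theorem lets us exchange expectation and sum. This gives $\E[X^{p}]=\sum_{k\ge1}(k^{p}-(k-1)^{p})\P(X\ge k)=\sum_{k\ge1}(k^{p}-(k-1)^{p})c_k^{*}$, with values in $[0,\infty]$. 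This step needs no integrability assumption.

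Second, I would show the series is finite in types~I and~II. The mean value theorem gives $k^{p}-(k-1)^{p}\le p\,k^{p-1}$ for $p\ge1$; for $0<p<1$ the difference is at most $1$. Lemma~\ref{lem: exp_c_k_star} gives $c_k^{*}\le(1-p_0)^{k}$ with $0<p_0<1$. So the series is dominated by $\max(p,1)\sum_k k^{\max(p-1,0)}(1-p_0)^k<\infty$. In particular $\P(X=\infty)=\lim_k c_k^{*}=0$, so $X$ is almost surely finite. The assumption $p_0>0$ holds in types~I and~II because $\xi$ has mean at most $1$ and $p_0+p_1<1$. The only mild point to check is $p_0<1$, which holds because some $\omega_k>0$ with $k\ge2$.

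Third, in type~III, Proposition~\ref{propo: c_k_star} gives $c_k^{*}=0$ for all $k\ge1$. Then both sides vanish, and $\E[X^p]=0$ directly since $X=0$ almost surely. I do not expect any real obstacle here. The only care needed is the possible value $\infty$, which is why Tonelli is used first and finiteness established afterwards.
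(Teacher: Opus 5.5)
Your proposal is correct and follows the paper's argument. The paper gives no separate proof here; its reasoning is the one used for Proposition~\ref{propo: moments_kesten_distance_finite}: the standard tail-sum identity, convergence from the bound $c_k^{*}\le(1-p_0)^{k}$ of Lemma~\ref{lem: exp_c_k_star}, and the degenerate type~III case. One small fix: for $0<p<1$, positivity of $k^{p}-(k-1)^{p}$ alone does not force the series to diverge when $X=\infty$, so argue instead that the partial sums telescope to $K^{p}\to\infty$.
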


\subsection{Applications}\label{subsec: pointed_applications}

In this final section we compute the pointed constants $c_k^{*}$ for the same three critical offspring distributions considered for the rooted case, Poisson, Geometric and ${0,2}$-Bernoulli. In the combinatorial setting these correspond to a uniformly chosen vertex of a uniform rooted labelled Cayley tree, a uniform rooted plane tree, and a uniform rooted binary plane tree.
\medskip

Observe that the proof of Proposition~\ref{propo: c_k_star} establishes the recurrence relation
\begin{equation}\label{eq: recurrence_star}
	c_k^{*}=F_k(1)\,c_{k-1}\,,\qquad k\geq1\,,
\end{equation}
where $F_k(1)=F(F_{k-1}(1))$ and $c_k=G'(F_{k-1}(1))\,c_{k-1}$ are the recurrences of Proposition~\ref{lem: c_k}, with the initial conditions $F_0(1)=1$ and $c_0=1$; equivalently, $c_k^{*}=F_k(1)F_{k-1}'(1)/m$ for $k\geq2$.
\medskip

\paragraph{\textbf{Offspring distribution Poisson(1):}} In this case $G(z)=e^{z-1}$, $F(z)=\tfrac1e(e^z-1)$ and $m=1$ (type~I). The recurrence \eqref{eq: recurrence_star} reads $F_k(1)=\tfrac1e\bigl(e^{F_{k-1}(1)}-1\bigr)$ and $c_k=\exp\bigl(F_{k-1}(1)-1\bigr)c_{k-1}$, and $c_k^{*}=F_k(1)\,c_{k-1}$. We compute the first values
\begin{equation*}
	c_1^{*}=1-e^{-1}\approx0.63212\,,\quad c_2^{*}\approx0.32432\,,\quad c_3^{*}\approx0.09755\,,\quad c_4^{*}\approx0.01961\,.
\end{equation*}
Numerically, the first three moments are
\begin{equation*}
	\E(\partial(\cT^{*}))\approx1.07727\,,\quad \E(\partial(\cT^{*})^2)\approx2.26436\,,\quad \E(\partial(\cT^{*})^3)\approx5.72421\,,
\end{equation*}
so that $\V\mathrm{ar}(\partial(\cT^{*}))\approx1.10385$.
\smallskip

\paragraph{\textbf{Offspring distribution Geom(1/2):}} In this case $G(z)=1/(2-z)$, $F(z)=z/(4-2z)$, $G'(z)=1/(2-z)^2$ and $m=1$ (type~I). Using $F_j(1)=3/(4^j+2)$ and $c_k=9\cdot4^{k}/(4^{k}+2)^2$, the product \eqref{eq: describe_c_k_star} telescopes to the closed form
\begin{equation*}
	c_k^{*}=\P(\partial(\cT^{*})\geq k)=\frac{27\cdot4^{\,k-1}}{(4^{k}+2)\,(4^{\,k-1}+2)^2}\,,\qquad k\geq1\,.
\end{equation*}
The first values are $c_1^{*}=\tfrac12$, $c_2^{*}=\tfrac16$, $c_3^{*}=\tfrac{2}{99}\approx0.02020$ and $c_4^{*}=\tfrac{8}{5203}\approx0.00154$, and the first three moments are
\begin{equation*}
	\E(\partial(\cT^{*}))\approx0.68851\,,\quad \E(\partial(\cT^{*})^2)\approx1.11276\,,\quad \E(\partial(\cT^{*})^3)\approx2.11421\,,
\end{equation*}
so that $\V\mathrm{ar}(\partial(\cT^{*}))\approx0.63871$.
\smallskip

\paragraph{\textbf{Offspring distribution $2 \cdot$Ber(1/2):}} In this case $G(z)=(1+z^2)/2$, $F(z)=z^2/2$, $G'(z)=z$ and $m=1$ (type~I). Using $F_k(1)=2^{-(2^k-1)}$ and $c_k=2^{\,k+1-2^k}$, \eqref{eq: describe_c_k_star} yields the closed form
\begin{equation*}
	c_k^{*}=\P(\partial(\cT^{*})\geq k)=2^{\,k+1-3\cdot2^{\,k-1}}\,,\qquad k\geq1\,,
\end{equation*}
so that $c_1^{*}=\tfrac12$, $c_2^{*}=\tfrac18$, $c_3^{*}=2^{-8}$ and $c_4^{*}=2^{-19}$. Consequently, for any integer $p\geq1$,
\begin{equation*}
	\E(\partial(\cT^{*})^{p})=\sum_{k=1}^{\infty}\bigl(k^{p}-(k-1)^{p}\bigr)\,2^{\,k+1-3\cdot2^{\,k-1}}\,,
\end{equation*}
and numerically
\begin{equation*}
	\E(\partial(\cT^{*}))\approx0.62891\,,\quad \E(\partial(\cT^{*})^2)\approx0.89454\,,\quad \E(\partial(\cT^{*})^3)\approx1.44929\,,
\end{equation*}
so that $\V\mathrm{ar}(\partial(\cT^{*}))\approx0.49902$.
\smallskip

In each case $\E(\partial(\cT^{*}))<\E(\partial(\hat{\cT}))$ (the rooted means being $\approx2.28619$, $\approx1.62297$ and $\approx1.56299$, respectively), reflecting that a uniformly chosen vertex sits typically closer to the boundary than the size-biased root of the rooted local limit.

\bibliographystyle{abbrv}
\bibliography{main}

\end{document}